\documentclass[11pt,a4paper]{article}

\usepackage[english]{babel}
\usepackage[utf8]{inputenc}
\usepackage[T1]{fontenc}
\usepackage{lmodern}

\usepackage{graphicx}
\graphicspath{{assets/}}
\usepackage{graphbox}
\usepackage{amssymb}
\usepackage{amsmath}
\usepackage{amsthm}
\usepackage{mathtools}
\usepackage{stmaryrd}
\usepackage[colorlinks=true,linkcolor=blue,citecolor=blue,urlcolor=blue]{hyperref}
\usepackage{cleveref}

\usepackage{csquotes,xpatch}
\usepackage[style=numeric,giveninits=true,sorting=nyt,maxbibnames=99,backend=biber]{biblatex}
\DeclareFieldFormat{titlecase}{\MakeSentenceCase*{#1}}

\usepackage[a4paper,margin=1in]{geometry}

\usepackage{enumitem}
\usepackage{microtype}

\input{macros.sty}

\newtheorem{theorem}{Theorem}[section]
\newtheorem{lemma}[theorem]{Lemma}
\newtheorem{corollary}[theorem]{Corollary}
\newtheorem{proposition}[theorem]{Proposition}

\theoremstyle{definition}

\theoremstyle{remark}
\newtheorem{remark}[theorem]{Remark}

\title{Strong edge-colouring via local flag algebras}
\author{Eoin Davey \and Eoin Hurley \and R\'emi de Joannis de Verclos \and Ross J. Kang \and Jan Volec}
\date{\today}

\begin{document}

\maketitle

\begin{abstract}
The strong chromatic index $\chi'_s(G)$ is the smallest number of
colours needed to colour the edges of a graph $G$ so that any two
edges at distance at most $2$ receive different colours. Using the
\emph{local flag algebra} framework introduced in a companion paper, we prove
$\chi'_s(G) \leq 1.73\,\Delta(G)^2$ for every graph $G$ of maximum
degree $\Delta(G)$, $\chi'_s(G) \leq 1.6255\,\Delta(G)^2$ for every
bipartite $G$, and $\chi'_s(G) \leq 1.6633\,\Delta_A(G)\,\Delta_B(G)$
for every bipartite $G$ of side maximum degrees
$\Delta_A(G), \Delta_B(G)$ with rational
$\Delta_B(G)/\Delta_A(G) \in (0, 1]$, provided $\Delta(G)$, $\Delta_A(G)$, $\Delta_B(G)$ are sufficiently large. These three bounds make progress towards three established
conjectures: those of Erd\H{o}s--Ne\v{s}et\v{r}il (1985) for general
graphs, Faudree--Gy\'arf\'as--Schelp--Tuza (1989) for bipartite graphs,
and Brualdi--Quinn Massey (1993) in the asymmetric bipartite setting.

Additionally, for the random bipartite graph $G \sim G(n_A, n_B, p)$
at constant $p \in (0,1)$ and bounded aspect ratio
$\max(n_A, n_B) = O(\min(n_A, n_B))$, we prove the
Brualdi--Quinn Massey bound
$\chi'_s(G) \leq \Delta_A(G)\,\Delta_B(G)$ asymptotically almost
surely.
\end{abstract}


\section{Introduction}
\label{sec:introduction}

A \emph{strong edge-colouring} of a graph $G$ is a proper edge-colouring in which any two edges sharing a vertex or joined by a
third edge receive different colours. Equivalently, it is a proper
vertex-colouring of the square $L(G)^2$ of the line graph $L(G)$, where
$V(L(G)^2) = E(G)$ and two edges are adjacent in $L(G)^2$ when they
sit at distance at most $2$ in $L(G)$. The minimum number of colours
needed is the \emph{strong chromatic index} $\chi'_s(G)$.

Erd\H{o}s and Ne\v{s}et\v{r}il, in
1985~\cite{faudreeInducedMatchingsBipartite1989}, conjectured that
$\chi'_s(G) \leq \tfrac{5}{4}\Delta(G)^2$ for every graph $G$; this is sharp
on a blow-up of $C_5$. After the breakthrough by Molloy and
Reed~\cite{molloyBoundStrongChromatic1997}~($1.998\,\Delta^2$), further successive improvements over the trivial bound
$2\Delta(G)^2$ were due to
Bruhn and Joos~\cite{bruhnStrongerBoundStrong2018}~($1.93\,\Delta^2$),
Bonamy, Perrett, and
Postle~\cite{bonamyColouringGraphsSparse2018}~($1.835\,\Delta^2$),
and Hurley, de~Joannis~de~Verclos, and
Kang~\cite{hurleyImprovedProcedureColouring2022}~($1.772\,\Delta^2$),
each for $\Delta(G)$ sufficiently
large. For bipartite $G$, Faudree,
Gy\'arf\'as, Schelp, and
Tuza~\cite{faudreeInducedMatchingsBipartite1989} conjectured the
sharper bound $\chi'_s(G) \leq \Delta(G)^2$, and Brualdi and
Quinn Massey~\cite{brualdiIncidenceStrongEdge1993} conjectured an
asymmetric strengthening $\chi'_s(G) \leq \Delta_A(G)\,\Delta_B(G)$
for bipartite $G$ with side maximum degrees $\Delta_A(G), \Delta_B(G)$. Prior to
the present work, no broad improvement over the general bound was known
in either the symmetric or the asymmetric bipartite setting.

In a companion
paper~\cite{daveyPentagonLocalFlags2026}, we introduced \emph{local flag
algebras}, a novel variant of Razborov's flag
algebras~\cite{razborovFlagAlgebras2007} in which densities are
normalised by the maximum degree $\Delta(G)$ rather than the order
$|G|$. Applying that framework to the
strong-neighbourhood density of $L(G)^2$ yields our three main results.
These results constitute progress towards the Erd\H{o}s--Ne\v{s}et\v{r}il, Faudree--Gy\'arf\'as--Schelp--Tuza,
and Brualdi--Quinn Massey conjectures, respectively.

\begin{theorem}[general strong chromatic index bound]
\label{thm:sec-general}
For every graph $G$ with $\Delta(G)$ sufficiently large,
\[
\chi'_s(G) \leq 1.73\,\Delta(G)^2.
\]
\end{theorem}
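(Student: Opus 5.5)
The plan is to follow the standard reduction from strong edge-colouring to local sparsity of $L(G)^2$, and to supply the sparsity input through the local flag algebra framework of the companion paper~\cite{daveyPentagonLocalFlags2026}.

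\textbf{Step 1: the colouring lemma.} Let $H = L(G)^2$. Then $\Delta(H) \leq 2\Delta(G)^2$, up to lower-order terms. The input is a known colouring result for graphs with sparse neighbourhoods, in the form used by Molloy--Reed, Bruhn--Joos and Hurley--de~Joannis~de~Verclos--Kang. It states that if every vertex neighbourhood in $H$ spans at most $(1-\sigma)\binom{\Delta(H)}{2}$ edges, then
\[
\chi(H) \leq (1-\varepsilon(\sigma))\,\Delta(H) + o(\Delta(H)).
\]
I would use the sharpest available version, namely the improved procedure of~\cite{hurleyImprovedProcedureColouring2022}, which gives an explicit profile $\varepsilon(\sigma)$. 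Achieving $1.73\,\Delta^2$ therefore reduces to showing that every neighbourhood $N_H(e)$ misses at least a $\sigma_0$-fraction of its pairs. Here $\sigma_0$ is the value at which $2(1-\varepsilon(\sigma_0)) \leq 1.73$. A refinement is to use a version of the colouring lemma that takes $|N_H(e)|$ and the number of edges in $N_H(e)$ separately, since small neighbourhoods trivially help.

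\textbf{Step 2: the local flag algebra optimisation.} Fix an edge $uv$ of $G$. Normalise by $\Delta = \Delta(G)$ the quantities $|N_H(uv)|/\Delta^2$ and $e(H[N_H(uv)])/\Delta^4$. These are densities of small rooted configurations: edges at distance at most $2$ from $uv$, and pairs of such edges at distance at most $2$ from each other. The configurations are described by bounded-size rooted graphs around $uv$, so they are exactly the local flags of~\cite{daveyPentagonLocalFlags2026}. The target inequality is a linear combination of the form
\[
\frac{e(H[N_H(uv)])}{\Delta^4} - c\,\frac{|N_H(uv)|^2}{\Delta^4} \leq \text{(target)}.
\]
Here $c$ is a constant chosen to match the tradeoff in Step 1. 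I would certify this by a semidefinite program in the local flag algebra. Its constraints are the degree bounds (every vertex has at most $\Delta$ neighbours, giving linear inequalities on rooted densities), together with Cauchy--Schwarz/SOS terms over types rooted at $u$, $v$, and at pairs of vertices in the second neighbourhood. The companion paper guarantees that a numerical SDP solution, once rounded to an exact rational certificate, yields the inequality for all $G$ with $\Delta$ large, with error $o(1)$.

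\textbf{Step 3: combine.} Plug the certified bound into the colouring lemma, choosing the version that accounts jointly for neighbourhood size and sparsity. Then check numerically that the resulting bound is below $1.73\,\Delta^2$ for every feasible pair $(|N_H(uv)|, e(H[N_H(uv)]))$. In practice this becomes a one-parameter family of SDPs, parametrised by the neighbourhood size, together with an envelope argument.

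\textbf{Main obstacle.} The hard step is Step 2. The sparsity required is substantial: the $C_5$ blow-up already shows the extremal structure is far from the trivial $2\Delta^2$. The flags must therefore be large enough, likely up to several vertices beyond the root edge, for the SDP to detect the pentagon-like extremal behaviour while remaining computationally tractable. I would start with the smallest flag size that recovers the $1.772$ bound, then enlarge the types incrementally until $1.73$ is certified. Rigorous rounding of the SDP solution is a secondary but necessary technical step.
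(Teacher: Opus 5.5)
Your plan has a genuine gap in Steps 1--2. You aim to certify that \emph{every} neighbourhood of $H = L(G)^2$ is $\sigma_0$-sparse with $\sigma_0 \approx 0.335$, the value needed for $2(1-\varepsilon(\sigma_0)) \le 1.73$. That statement is false.

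\begin{itemize}
\item Bruhn and Joos showed that a strong neighbourhood can span $\tfrac32\Delta^4 + O(\Delta^3)$ edges, and that this is asymptotically attained. For example, join $N(u)\cup N(v)$ to about $2\Delta$ further vertices by a random-like $\Delta$-regular bipartite graph. Then $N_s(uv)$ has about $2\Delta^2$ edges, and about $3/4$ of their pairs lie at distance at most $2$.
\item So the sparsity of $L(G)^2$ itself is capped at $1/4$. Fed into the Hurley--de~Joannis~de~Verclos--Kang lemma this gives $\varepsilon(1/4)\approx 0.104$, i.e.\ only about $1.79\,\Delta^2$. That is not even the $1.772$ you propose to recover first.
\item No SDP, at any flag size, certifies a false inequality.
\item Your neighbourhood-size refinement does not help either. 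The extremal neighbourhood has essentially full size $2\Delta^2$. Take a clique on $\approx\sqrt{3}\,\Delta^2$ vertices and pad each clique vertex with pendant neighbours up to degree $2\Delta^2$. Every vertex then has a (degree, neighbourhood edge count) profile no worse than the Bruhn--Joos vertex, yet the chromatic number is $\approx\sqrt{3}\,\Delta^2 > 1.73\,\Delta^2$.
\item Hence no colouring lemma fed only per-vertex pairs $(|N_H(e)|, e(H[N_H(e)]))$ can output $1.73\,\Delta^2$, and your envelope in Step 3 cannot close.
\end{itemize}

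The missing idea, from Bonamy--Perrett--Postle as adapted by Hurley et al.\ and used in the paper, is to exploit the strong degrees of the \emph{neighbours}. In the configuration above, each edge of $N_s(uv)$ has only about $\tfrac32\Delta^2$ strong neighbours. The paper's proof proceeds as follows.
\begin{itemize}
\item Fix $\eta$ and take the maximal $F\subseteq V(L(G)^2)$ on which $L(G)^2[F]$ has minimum degree at least $(2-\eta)\Delta^2$.
\item Bound the edge density of neighbourhoods \emph{inside} $L(G)^2[F]$.
\item Colour $L(G)^2[F]$ with the sparse colouring lemma at scale $2\Delta^2$.
\item Extend to all of $L(G)^2$ by vertex-criticality: $\chi(L(G)^2) \le \max\bigl(\chi(L(G)^2[F]),\, \lceil(2-\eta)\Delta^2\rceil + 1\bigr)$.
\end{itemize}

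In the flag algebra (after reducing to regular $G$), the encoding is:
\begin{itemize}
\item $N(u)\cup N(v)$ is a vertex colour, with its size bounded by $2\Delta$;
\item membership in $F$ is an edge colour;
\item the minimum-degree property of $F$ enters the SDP as an extra linear constraint, namely that a black edge has normalised strong degree at least $2(2-\eta)$.
\end{itemize}
That constraint is what excludes the Bruhn--Joos configuration.

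The parameter $\eta$ must then be balanced against the criticality branch. With $\eta = 0.2703$, the size-$5$ certificate gives $\phi(O)\le 10.644$, hence $\varsigma = 1 - 10.644/16$ and $\varepsilon(\varsigma)\approx 0.1351$. This yields $\chi(L(G)^2[F]) \le 1.7298\,\Delta^2$, just above the other branch $2-\eta = 1.7297$, and both sit below $1.73\,\Delta^2$.
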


\begin{theorem}[bipartite strong chromatic index bound]
\label{thm:sec-bipartite}
For every bipartite graph $G$ with $\Delta(G)$ sufficiently large,
\[
\chi'_s(G) \leq 1.6255\,\Delta(G)^2.
\]
\end{theorem}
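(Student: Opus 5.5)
The bound will follow from a local sparsity bound and the standard reduction from sparse neighbourhoods to colouring. Let $H = L(G)^2$, so $\Delta(H) \le 2\Delta^2$ with $\Delta=\Delta(G)$. We use the known sparse-neighbourhood colouring results, in the strengthened form of Hurley, de~Joannis~de~Verclos and Kang~\cite{hurleyImprovedProcedureColouring2022}. Suppose every vertex of $H$, that is every edge $uv$ of $G$, has a neighbourhood spanning at most $(1-\sigma)\binom{2\Delta^2}{2}$ edges of $H$. Then $\chi(H) \le (1-c(\sigma)+o(1))\,2\Delta^2$ for an explicit function $c$. Theorem~\ref{thm:sec-bipartite} therefore reduces to a lower bound on the local sparsity $\sigma$ for bipartite $G$. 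The target is the value of $\sigma$ that makes $2(1-c(\sigma)) \le 1.6255$.

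\textbf{Step 1: encode the problem in local flag algebras.} Following the companion paper~\cite{daveyPentagonLocalFlags2026}, we root at an edge $uv$. The flags are small bipartite graphs containing the root edge, with the remaining vertices at distance at most two or three from it. Densities are normalised by powers of $\Delta$. The quantity to bound, the number of non-edges of $H$ inside $N_H(uv)$ divided by $\Delta^4$, is then a linear combination of such flag densities. Pairs of edges in $N_H(uv)$ at distance at least $3$ in $L(G)$ correspond to configurations of at most $8$ vertices around $uv$.

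Bipartiteness is built into the universe of admissible flags by discarding all flags with odd cycles. This is where the improvement over the general case (Theorem~\ref{thm:sec-general}) comes from. In a bipartite graph the two endpoints $u,v$ have disjoint neighbourhoods, and triangle-type configurations that make $N_H(uv)$ dense are forbidden.

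\textbf{Step 2: derive the constraints.} We use three kinds.

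\begin{itemize}
\item \emph{Degree constraints.} Every vertex has at most $\Delta$ neighbours, so normalised degree densities are at most $1$.
\item \emph{Averaging / double counting.} Moving the root from $uv$ to a neighbouring edge preserves the relevant sums. This is the local analogue of Razborov's downward averaging operator, supplied by the companion paper.
\item \emph{Positivity.} For each small type $\tau$, quadratic forms $\llbracket f^\top Q f\rrbracket_\tau$ with $Q \succeq 0$ are nonnegative.
\end{itemize}

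Minimising the non-edge density subject to these constraints is a semidefinite program. We solve it numerically and then round to an exact rational certificate, giving $\sigma \ge \sigma_0$ for every edge of every bipartite $G$ with $\Delta$ large. There is a small subtlety: the sparsity result must hold for all vertices of $H$, not just on average. Local flag algebras handle this because the inequalities are proved pointwise at each root.

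\textbf{Step 3: conclude.} We substitute $\sigma_0$ into the colouring bound of~\cite{hurleyImprovedProcedureColouring2022}. If needed, we use its refined version, which accounts for density in $H$ that is unevenly distributed. This yields $\chi'_s(G) \le 1.6255\,\Delta^2$ for large $\Delta$. Lower-order errors from the flag-algebra limit are absorbed by the $o(1)$ terms.

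The main obstacle is Step 2. We need to choose flag sizes and types large enough for the SDP value to reach the required sparsity, while keeping the program tractable. We also need to convert the floating-point solution into a rigorous rational certificate, for instance by projecting onto the constraint space and verifying positive-definiteness exactly. I would first try types consisting of the root edge plus one or two vertices at distance one, and enlarge them only if the bound falls short.
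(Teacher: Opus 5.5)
\textbf{The gap is in Step~2.} You apply the sparse colouring lemma to all of $H=L(G)^2$. That forces your SDP to prove that the strong neighbourhood of \emph{every} edge of every bipartite $G$ spans at most about $0.512\binom{2\Delta^2}{2}$ edges. Indeed, you need $\varepsilon(\varsigma)=\varsigma/2-\varsigma^{3/2}/6\ge 0.18725$, i.e.\ $\varsigma\ge 0.4882$. Nothing in the proposal shows that $\sigma_0$ reaches this, and there is reason to doubt it.

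This all-edge density is at least $1/2$. Join $u\in A$ to a $\Delta$-set $X\ni v$, and $v$ to a $\Delta$-set $Y\ni u$. Then join $Y\setminus\{u\}$ completely to $\Delta-1$ fresh $B$-vertices, and $X\setminus\{v\}$ completely to $\Delta-1$ fresh $A$-vertices. Now $N_s(uv)$ consists of two strong cliques of about $\Delta^2$ edges each, with no strong adjacencies between the two blocks. That gives about $\Delta^4$ of the roughly $2\Delta^4$ pairs. So your certificate would have to come within about $0.012$ of an explicit construction.

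The paper's own all-edge bipartite certificate is the four-colour one behind Theorem~\ref{thm:asym-sec}; at $r=1$ it is essentially your setting. At flag size $5$ it only reaches density $0.5687$, i.e.\ the constant $1.6633$. The paper attributes this loss to forgoing exactly the ingredient you are missing.

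\textbf{The missing idea} is the restriction to high strong degree, due to Bonamy--Perrett--Postle and adapted by Hurley, de~Joannis~de~Verclos and Kang.

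\emph{Restriction.} Fix $\eta$ and let $F\subseteq E(G)$ be the maximal set with $\delta(L(G)^2[F])\ge(2-\eta)\Delta^2$. It is unique, since the union of two such sets is again one. Only edges of $F$ need sparse neighbourhoods, and only inside $F$.

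\emph{Constraint in the SDP.} The paper records membership of $F$ as an edge colour in the rooted four-colour reduction. It then adds the linear constraint that every $F$-edge has normalised strong degree at least $2(2-\eta)$. This excludes configurations like the one above, whose non-root edges have strong degree only about $\Delta^2$.

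\emph{Bound on $F$.} At $\eta=0.3746$ the certificate gives density at most $4.093/8\approx0.5116$ inside $F$. Hence $\chi(L(G)^2[F])\le(1.6254+2\iota)\Delta^2$.

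\emph{Criticality lift.} Set $t=\max\bigl(\chi(L(G)^2[F]),\lceil(2-\eta)\Delta^2\rceil+1\bigr)$. A vertex-minimal subgraph with chromatic number above $t$ has minimum degree at least $t-1\ge(2-\eta)\Delta^2$. So it lies inside $F$, a contradiction.

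The constant $1.6255$ is the balance point of the two branches $2(1-\varepsilon(\varsigma(\eta)))$ and $2-\eta$. Your appeal to a ``refined version'' of the colouring lemma does not supply this trade-off. The minimum-degree information has to enter the SDP itself as a constraint, and then be undone by the criticality argument.
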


\begin{theorem}[asymmetric strong chromatic index bound]
\label{thm:asym-sec}
For every rational $r \in (0, 1]$ and every bipartite graph $G$
with side maximum degrees $\Delta_A$ and $\Delta_B$ satisfying
$\Delta_B = r\,\Delta_A$, and $\Delta_A$ sufficiently large,
\[
\chi'_s(G) \leq 1.6633 \cdot \Delta_A \Delta_B.
\]
\end{theorem}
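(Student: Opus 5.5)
Our plan follows the standard density-to-colouring route that underlies the earlier bounds of Molloy--Reed, Bruhn--Joos and Hurley--de~Joannis~de~Verclos--Kang, with the local flag algebra providing the density estimate.

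\textbf{Step 1: reduction to a sparsity statement.} We will invoke a colouring theorem for graphs with sparse neighbourhoods. The version we have in mind is the one of Hurley, de~Joannis~de~Verclos and Kang, in a form that allows degrees to differ across the two sides. Its content is as follows: if every vertex of a graph $H$ of maximum degree at most $D$ has a neighbourhood spanning at most $(1-\sigma)\binom{D}{2}$ edges, then
\[
\chi(H)\le (1-f(\sigma)+o(1))\,D,
\]
for an explicit increasing function $f$. We apply this with $H=L(G)^2$. Every edge $uv$ of $G$ with $u\in A$, $v\in B$ has at most
\[
D=\Delta_A\Delta_B+\Delta_B\Delta_A-\Delta_A-\Delta_B+1\le 2\Delta_A\Delta_B
\]
neighbours in $H$, obtained by counting the edges incident to $N(u)$ and to $N(v)$. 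It therefore suffices to show that every such neighbourhood spans at most $(1-\sigma_r)\binom{2\Delta_A\Delta_B}{2}$ edges. The constant $\sigma_r$ must be large enough that
\[
2(1-f(\sigma_r))\le 1.6633
\]
uniformly in $r\in(0,1]$. We also take a standard preprocessing step: we may assume $G$ is $(\Delta_A,\Delta_B)$-biregular up to lower-order terms, since embedding $G$ into a biregular supergraph does not decrease $\chi'_s$.

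\textbf{Step 2: local flag algebra formulation.} We normalise by $\Delta_A$ and $\Delta_B$ on the respective sides and encode the rooted configuration. This consists of the root edge $uv$, its neighbours $N(u)\subseteq B$ and $N(v)\subseteq A$, and their second neighbourhoods. The edge density of $H[N_H(uv)]$ then becomes a linear combination of local flag densities. Here we use the companion paper's framework with the asymmetric normalisation. The rationality of $r=\Delta_B/\Delta_A$ is exactly what lets us treat the two-type normalisation as a finite, limit-object setting: we take $\Delta_A=qk$ and $\Delta_B=pk$ with $k\to\infty$, and the limit objects have a fixed type ratio.

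Two edges $e,f$ in $N_H(uv)$ are adjacent in $H$ precisely when some short path joins them. Counting these adjacent pairs therefore reduces to counting rooted subgraphs on boundedly many vertices, that is, local flags such as $C_4$'s through $uv$, paths of length $3$ between $N(u)$ and $N(v)$, and so on.

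\textbf{Step 3: semidefinite certificate.} We will bound the maximum of this density by a flag-algebra SDP. The constraints are local Cauchy--Schwarz inequalities over the types that arise, together with the degree constraints $\deg\le\Delta_A$ (resp.\ $\Delta_B$), which in the local normalisation become linear constraints. For each rational $r$, the resulting certificate is a positive semidefinite matrix with entries polynomial in $r$.

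\textbf{Main obstacle.} The main obstacle is uniformity in $r$. A single numerical SDP only handles one value of $r$, so we plan to proceed as follows. We split $(0,1]$ into finitely many intervals. On each interval we find a certificate whose matrices depend affinely (or polynomially) on $r$, and we verify positive semidefiniteness by interval arithmetic or by a sum-of-squares argument in $r$. Near $r\to 0$ the configuration degenerates. In that regime we expect a direct argument to give sparsity bounded away from zero: $N(v)$ has only $\Delta_B$ vertices, each of degree at most $\Delta_B$, so the $H$-neighbourhood is dominated by a near-disjoint union of stars, and the sparsity there is large. We will treat this regime separately. Finally, we convert the worst-case $\sigma_r$ over all intervals into the constant $1.6633$ via $f$.
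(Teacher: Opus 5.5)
Your Step 1 and the biregular preprocessing match the paper. The paper completes $G$ to an exactly $(\Delta_A,\Delta_B)$-biregular $H$ that contains $G$ as an induced subgraph. It then applies the sparse colouring lemma to all of $L(H)^2$ at scale $D=2\Delta_A\Delta_B$. For that it needs an all-edge density of at most $0.5687$, i.e.\ $\varsigma\ge 0.4313$ and $\varepsilon(\varsigma)\approx 0.16844$.

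\textbf{The gap} is in how you obtain that density bound uniformly in $r$. Your plan has two parts: certificates polynomial in $r$ on finitely many intervals, and a separate argument as $r\to 0$. The first part is not carried out. The second, the only concrete piece, is wrong.
\begin{itemize}
\item $N(v)\subseteq A$ has at most $\Delta_B$ vertices, but each has degree up to $\Delta_A$, not $\Delta_B$. So the edges at $N(v)$ form $\Delta_B$ stars of size $\Delta_A$, totalling $\Delta_A\Delta_B$ edges, the same order as the edges at $N(u)$.
\item Pairs of edges sharing a vertex contribute only $O(\Delta_A^2\Delta_B)$ adjacencies. The $\Theta(\Delta_A^2\Delta_B^2)$ adjacent pairs come from non-incident edges joined by a third edge, and your heuristic does not control these.
\item ``Sparsity bounded away from zero'' is not enough. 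To reach $1.6633$ you need $\varsigma\ge 0.4313$ in that regime as well.
\end{itemize}

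\textbf{The missing idea} is the paper's degree-balancing blow-up, which makes the uniformity problem disappear. Replace each $A$-vertex of $H$ by $\Delta_A$ independent copies and each $B$-vertex by $\Delta_B$ copies, and join copies completely along each edge. The resulting $H'$ is bipartite and $\Delta_A\Delta_B$-regular.

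Two distinct edges $(a_i,b_j)$ and $(a'_k,b'_l)$ of $H'$ are strongly adjacent if and only if $ab'$ or $a'b$ is an edge of $H$. Equivalently, $ab$ and $a'b'$ are equal or strongly adjacent in $H$. So $L(H')^2$ is $L(H)^2$ with every vertex replaced by a clique $K_{\Delta_A\Delta_B}$ and every edge by a complete join. Consequently, the edge density of each strong neighbourhood in $H'$ at scale $2\Delta(H')^2$ equals that of the corresponding strong neighbourhood in $H$ at scale $2\Delta_A\Delta_B$, up to $O(1/(\Delta_A\Delta_B))$.

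Hence a single density bound for regular bipartite graphs serves every $r$ at once. In the paper this is $\phi(O)\le 4.5496$ on the regular four-colour class. You then need no $r$-dependent SDP family, no two-scale normalisation of the companion framework (which would need its own justification), and no separate treatment of $r\to 0$. Note that the colouring lemma is still applied to $L(H)^2$ itself, not to $L(H')^2$; only the density bound is transferred back from $H'$.
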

\noindent
The last theorem is stated for rational $r$; for irrational $r$ no
bipartite graph realises $\Delta_B = r\,\Delta_A$ exactly, but the
same bound holds asymptotically along any sequence with
$\Delta_B/\Delta_A \to r$ and $\Delta_A \to \infty$.

Additionally, we confirm the Brualdi--Quinn Massey conjecture itself,
asymptotically almost surely (a.a.s.), for the random bipartite graph.

\begin{theorem}[a.a.s.\ Brualdi--Quinn Massey]
\label{thm:bq-random}
Let $p \in (0, 1)$ and $C_{\mathrm{ar}} \geq 1$. Then for
$G \sim G(n_A, n_B, p)$ of bounded aspect ratio
$\max(n_A, n_B) \leq C_{\mathrm{ar}} \cdot \min(n_A, n_B)$,
\[
\chi'_s(G) \leq \Delta_A(G) \cdot \Delta_B(G)
\qquad\text{asymptotically almost surely as }
\min(n_A, n_B) \to \infty.
\]
\end{theorem}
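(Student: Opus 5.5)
We bound $\chi'_s(G)$ by $\Delta(L(G)^2)+1$, or slightly better, and compare this with $\Delta_A\Delta_B$. Fix $p$ and $C_{\mathrm{ar}}$, and write $n=\min(n_A,n_B)$, $N=\max(n_A,n_B)$.

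For an edge $e=ab$ with $a\in A$, $b\in B$, the edges at distance at most $2$ from $e$ are exactly the edges with at least one endpoint in $N(a)\cup N(b)$. Here $N(a)\subseteq B$ and $N(b)\subseteq A$. This set is the set of edges of $G$ meeting the complete bipartite "cross" on $N(b)\times N(a)$. Its size is
\[
d_{L(G)^2}(e)=\sum_{x\in N(b)}\deg(x)+\sum_{y\in N(a)}\deg(y)-e\bigl(N(b),N(a)\bigr)-1 .
\]
Since $N(b)\subseteq A$ and $N(a)\subseteq B$, the first two sums are at most $\deg(b)\Delta_A+\deg(a)\Delta_B\le 2\Delta_A\Delta_B$. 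The subtracted term $e(N(b),N(a))$ is about $p\cdot\deg(a)\deg(b)$. Thus a greedy colouring gives roughly
\[
(2-p)\,p^2 n_An_B\approx(2-p)\,\Delta_A\Delta_B,
\]
which exceeds $\Delta_A\Delta_B$ for every $p<1$. Greedy colouring is therefore insufficient. The real task is to show that $L(G)^2$ is far from a clique: its neighbourhoods are sparse, so a sparse/local colouring result gives a large saving.

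\textbf{Step 1 (concentration).} Using Chernoff bounds and a union bound over polynomially many vertices, pairs and triples, establish a.a.s.\ that:
\begin{itemize}
\item $\Delta_A=pn_B\pm O(\sqrt{n\log n})$ and $\Delta_B=pn_A\pm O(\sqrt{n\log n})$;
\item all codegrees are $p^2n_\ast\pm O(\sqrt{n\log n})$;
\item for every edge $ab$, $e(N(b),N(a))=p^3n_An_B(1+o(1))$.
\end{itemize}
This gives $\Delta(L(G)^2)=(2-p+o(1))\,p^2n_An_B$ and $\Delta_A\Delta_B=(1+o(1))\,p^2n_An_B$. The bounded aspect ratio ensures all error terms are $o(n^2)$ uniformly.

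\textbf{Step 2 (local density).} Compute the number of edges of $L(G)^2$ inside the neighbourhood of a typical edge $e=ab$. Two edges $f,g$ meeting the cross are adjacent in $L(G)^2$ only if they share a vertex or are joined by an edge. One then counts the pairs whose endpoints lie in $N(a)\cup N(b)$ and the pairs at distance $2$ in the rest of the graph. In a dense random graph almost everything is at distance $2$, so this density is a constant $\sigma(p)<1$ fraction of $\binom{D}{2}$, bounded away from $1$ as long as $p<1$. More usefully, each neighbourhood is a union of roughly $\Delta_A+\Delta_B$ "stars" (the edge sets at the vertices of $N(a)\cup N(b)$). Edges in different stars that are far apart are non-adjacent only with probability $\Theta(1)$.

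\textbf{Step 3 (colouring).} Apply a Molloy--Reed type result: if every neighbourhood in a graph of maximum degree $D$ spans at most $(1-\varepsilon)\binom{D}{2}$ edges, then $\chi\le(1-c\varepsilon)D$. This yields $\chi'_s\le(1-c\varepsilon)(2-p)\,p^2n_An_B$. The constant $c$ from such theorems is too weak to reach ratio $1$ for all $p$. So I would instead give an explicit construction exploiting the structure. Partition $A$ into $\Delta_B(1+o(1))$ and $B$ into $\Delta_A(1+o(1))$ random classes. Colour each edge by the pair of classes of its endpoints, giving about $\Delta_A\Delta_B$ colours. Then repair conflicts with a nibble/Rödl-type random refinement, using spare colours freed because the expected class pairs are sparse.

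\textbf{Main obstacle.} The hardest step is this final one: achieving the exact constant $1$ rather than $1+\delta$. The slack available from concentration is only $o(\Delta_A\Delta_B)$, while conflicts in a naive product colouring are a constant fraction. I expect to need the observation that a colour class of a strong edge-colouring is an induced matching. In $G(n_A,n_B,p)$, induced matchings have size $O(\log n)$, so $\chi'_s\ge|E|/O(\log n)$. This lower bound is consistent with the target, but it shows that colour classes must be tiny. The plan is therefore a fractional/semi-random covering by induced matchings of size $\Theta(\log n)$, combined with a final greedy clean-up using the $o(\Delta_A\Delta_B)$ leftover degree. This clean-up succeeds because $\Delta_A\Delta_B\approx p^2n_An_B$ is much larger than $|E|/\log n$.
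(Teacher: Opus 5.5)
Your proposal never reaches a proof. Steps 1--3 rule out greedy colouring, the sparse-colouring lemma and the product colouring. The argument meant to carry the theorem, a semi-random covering of $E(G)$ by induced matchings followed by a clean-up, is only named: there is no auxiliary structure, no covering theorem, and no hypotheses checked.

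The framing that leads you there is also mistaken. You place the difficulty in reaching the constant $1$ with only $o(\Delta_A\Delta_B)$ slack, but the target is far from tight. Deterministically $|E(G)| \le n_A\Delta_A(G)$, and a.a.s.\ $\Delta_B(G) = (1+o(1))\,p\,n_A$, so $|E(G)|/(\Delta_A\Delta_B) \le (1+o(1))/p$. Hence any strong edge-colouring whose colour classes have average size at least a \emph{constant} $k$ with $1/k < p$ already beats $\Delta_A\Delta_B$ by a constant factor. Your product colouring has no slack only because its classes have average size about $1/p$, exactly at this threshold. Nothing forces you to sit there, and your own remark that $|E|/\log n \ll \Delta_A\Delta_B$ points the same way.

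That observation is the missing idea. It turns the covering into a standard fixed-uniformity nibble, which is how the paper argues:
\begin{enumerate}
\item Fix a constant $k$ and $\zeta > 0$ with $1/k + \zeta < p$. Let $H_k$ be the $k$-uniform hypergraph on $E(G)$ whose hyperedges are the induced $k$-matchings of $G$.
\item The $H_k$-degree of an edge has mean $\binom{n_A-1}{k-1}\binom{n_B-1}{k-1}(k-1)!\,p^{k-1}(1-p)^{k(k-1)} = \Theta((n_An_B)^{k-1})$. Codegrees of disjoint pairs are smaller by a factor $\Theta(n_An_B)$, and intersecting pairs have codegree $0$.
\item Polynomial concentration then makes $H_k$ a.a.s.\ nearly regular with codegrees $o(\text{degree})$.
\item The Pippenger--Spencer theorem in Kahn's form gives disjoint induced matchings covering all but $\zeta|E(G)|$ edges.
\item One colour per matching and one fresh colour per uncovered edge give $\chi'_s(G) \le (1/k+\zeta)|E(G)| + 2 \le \Delta_A\Delta_B$ a.a.s.
\end{enumerate}

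Your choice of matchings of size $\Theta(\log n)$ points in the harder direction. The uniformity then grows with $n$, so the fixed-$k$ covering theorem no longer applies. Concentrating the counts of logarithmic-size induced matchings is also delicate: these are polynomials of degree $\Theta(\log n)$ carrying a superpolynomially small factor $(1-p)^{k(k-1)}$. Finally, a greedy clean-up that relies on small leftover degree would need control over \emph{where} the uncovered edges lie, which the nibble does not give. Giving each uncovered edge its own colour needs only their number.
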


We outline the proofs. 
Theorems~\ref{thm:sec-general}, \ref{thm:sec-bipartite},
and~\ref{thm:asym-sec} are the local-flag-algebra applications; their
proofs follow the architecture established for the local pentagon problem
in~\cite{daveyPentagonLocalFlags2026} and adapted to the
strong edge-colouring problem through the reduction to
strong-neighbourhood density.
The proof of Theorem~\ref{thm:asym-sec} reduces every rational ratio
to a single regular class --- by regularising both sides and then
balancing the two side degrees by a blow-up --- on which one
four-colour semidefinite-programming (SDP) certificate applies.
Theorem~\ref{thm:bq-random} confirms the
Brualdi--Quinn Massey conjecture~\cite{brualdiIncidenceStrongEdge1993}
for random bipartite graphs of bounded aspect ratio, through Kahn's
Pippenger--Spencer covering argument~\cite{kahnAsymptoticEdgeColoring1996}
fed by the Kim--Vu polynomial concentration
inequality~\cite{kimVuConcentration2000}.

\paragraph{Note on AI and Lean.}

A desired standard in the flag algebra community is, for robustness and increased confidence, to generate the needed SDP in two different software implementations.
Here our approach is new: we formally verify in Lean~4 the proof that our flag-algebra implementation produces, rather than relying on a separate codebase for corroboration.
We also wish to disclose that we obtained Theorem~\ref{thm:bq-random}, intended as an auxiliary supporting result toward the Brualdi--Quinn Massey conjecture, by deploying a commercially available agentic AI system to construct its proof directly in Lean~4 under our guidance.
See the closing notes \textit{\nameref{sec:lean-and-empirical}} and \textit{\nameref{sec:AI-usage}} for additional details.

\paragraph{Organisation.}

Section~\ref{sec:framework} fixes the notation and the framework
results we import from the companion
paper~\cite{daveyPentagonLocalFlags2026}; the companion paper proves
the framework itself, and we re-use it as a black box, restating only
what we need. Section~\ref{sec:problem} sets up the strong edge-colouring
problem, the reduction to strong-neighbourhood density, and the reduction to the regular case
(Lemma~\ref{lem:wlog-regular}). Section~\ref{sec:general} proves
Theorem~\ref{thm:sec-general} and compares it with the previous best
bound. Section~\ref{sec:bipartite} proves
Theorem~\ref{thm:sec-bipartite}. Section~\ref{sec:asym-sec} sets
up the asymmetric four-colour local-flag SDP, gives the reduction to
the biregular case and the degree-balancing blow-up, and obtains
Theorem~\ref{thm:asym-sec}.
Section~\ref{sec:certificates} describes the three SDP certificates
(general, bipartite, and asymmetric) as mathematical objects and
verifies the bridging lemmas left open by the main-text proofs.
Section~\ref{sec:random} proves Theorem~\ref{thm:bq-random} by an
auxiliary-hypergraph Pippenger--Spencer argument at constant matching
size.

\paragraph{Note added.}
During the preparation of this paper --- which builds in part on Eoin Davey's 2024 MSc thesis~\cite{daveyLocalFlags2024} (see~\url{https://scripties.uba.uva.nl/search?id=record_54581}) --- we learned of the independent work of Hao, Yang, and Yu~\cite{haoStrongChromaticIndexBipartite2026}, who announced a weaker form of Theorem~\ref{thm:asym-sec}.

\section{Local flag algebras: recap}
\label{sec:framework}

We import the framework
of~\cite{daveyPentagonLocalFlags2026} verbatim; the only purpose of
this section is to fix notation. The reader familiar with the
companion paper may skip to Section~\ref{sec:problem}.

Throughout, $\Gcl$ denotes a graph class, possibly with vertex and
edge colours, and $\Delta\colon \Gcl \to \mathbb{N}_0$ denotes the
maximum degree function. The hereditary closure
$\HeredG \supseteq \Gcl$ is closed under taking induced subgraphs.

\subsection{Types and flags}
A \emph{type} $\sigma$ is a (vertex- and edge-coloured) graph on the
ordered vertex set $[|\sigma|]$. A $\sigma$-flag $(F, \theta)$
consists of $F \in \HeredG$ together with an injection
$\theta\colon [|\sigma|] \to V(F)$ that realises $\sigma$ as the
induced subgraph $F[\im\theta]$ with the prescribed colours. We
write $\Gcl^\sigma$ for the isomorphism classes of $\sigma$-flags
drawn from $\Gcl$, and $\Gcl_n^\sigma$ for those of size $n$. For
$(F,\theta) \in \HeredG^\sigma$ and $(G,\eta) \in \Gcl^\sigma$, let
$c(F;G)$ denote the number of subsets $\im\eta \subseteq U \subseteq V(G)$
with $G[U] \cong F$ as $\sigma$-flags. The \emph{local density} is
\[
\rho(F; G) := \frac{c(F; G)}{\binom{\Delta(G)}{|F| - |\sigma|}}.
\]

\subsection{Local flags}
A $\sigma$-flag $(F, \theta)$ is a \emph{local} $\sigma$-flag if
both $\rho(F; \cdot)$ is bounded on $\Gcl^\sigma$ and every
label-extension $F^v$ (obtained by labelling one unlabelled vertex
$v$) is also a local flag. We write $\Gloc^\sigma$ for the local
flags and $\Glocsub{n}^\sigma$ for those of size $n$. A type
$\sigma$ is a \emph{local type} when the unlabelled graph
$\downflag{\sigma}$ is itself a local $\emptyset$-flag.

\subsection{The local flag algebra}
The vector space $\R\Gloc^\sigma$, modulo the chain-rule identities,
carries a commutative associative unital product (the \emph{local
flag product}); we denote the resulting algebra by $\Lcl^\sigma$.
The product satisfies
\[
\rho(f; G)\,\rho(g; G) = \rho(f \cdot g; G) + O(1/\Delta(G))
\]
for $f, g \in \Lcl^\sigma$ and $G \in \Gcl^\sigma$. A \emph{limit
functional} $\phi \in \Phi^\sigma$ is an $\R$-algebra homomorphism
$\Lcl^\sigma \to \R$ arising as $\phi(F) = \lim_k \rho(F; G_k)$ for
a $\Delta$-increasing sequence $(G_k) \subseteq \Gcl^\sigma$. The
\emph{semantic cone} is
\[
\SemCone^\sigma := \{ f \in \Lcl^\sigma : \phi(f) \geq 0
\text{ for all } \phi \in \Phi^\sigma \}.
\]

\subsection{Averaging}
For a local type $\sigma$, the averaging operator
$\llbracket \cdot \rrbracket\colon \Lcl^\sigma \to \Lcl^\emptyset$
sends $F$ to $\llbracket F \rrbracket := q_\sigma(F)\,\downflag{F}$
with $q_\sigma(F) = |\mathrm{Stab}(\theta)| / |F|!$. It preserves
positivity: $\llbracket f^2 \rrbracket \in \SemCone^\emptyset$ for
every $f \in \Lcl^\sigma$.

\subsection{Extension vectors}
When $\Gcl$ consists of regular graphs, the \emph{extension vector}
$\ext_i^\sigma$ is the sum over $\sigma$-flags of size $|\sigma|+1$
whose unique unlabelled vertex is adjacent to the vertex labelled
$i$. Then $\phi(\ext_i^\sigma) = 1$ for every $\phi \in \Phi^\sigma$,
and consequently
$\llbracket \ext_i^\sigma - \ext_j^\sigma \rrbracket \in
\SemCone^\emptyset$ and
$\phi(\llbracket f \cdot \ext_i^\sigma \rrbracket) = \phi(\llbracket f
\rrbracket)$ for every $f \in \Lcl^\sigma$ and every local type
$\sigma$.

\subsection{The semidefinite method}
To certify $\phi(f) \leq \lambda$ for $f \in \Lcl^\emptyset$ and
every $\phi \in \Phi^\emptyset$ it suffices to exhibit a
decomposition
\begin{equation}
\label{eq:sdp-decomp}
\lambda \emptyset - f
= \sum_i \alpha_i\,g_i
+ \sum_j \bigl\llbracket h_j^2 \bigr\rrbracket
\end{equation}
in $\Lcl^\emptyset$, with $\alpha_i \geq 0$, $g_i \in
\SemCone^\emptyset$ from the extension-difference family
$\llbracket \ext_p^\tau - \ext_q^\tau \rrbracket$ and any class-specific
linear constraints, and $h_j \in \Lcl^{\sigma_j}$ for local types
$\sigma_j$. The decomposition is the primal feasibility witness of a
semidefinite program; once an SDP solver has computed a primal-dual
pair, verifying~\eqref{eq:sdp-decomp} amounts to a finite
arithmetic check on integer or rational data.

We follow the certificate-verification convention of the companion
paper throughout
Sections~\ref{sec:cert-general}--\ref{sec:cert-asym}: exhibit the
matrices, give an integer $LDL^\top$ identity for each positive
semidefinite (PSD) block, write the
constraint identity as a polynomial equation in the certificate coefficients,
and absorb the rationalisation residual into an explicit slack budget.

\section{Strong edge-colouring and a regularity reduction}
\label{sec:problem}

\subsection{A reduction to strong-neighbourhood density}

For $e \in E(G)$, the \emph{strong neighbourhood} $N_s(e)$ is the
set of edges of $G$ at distance at most $2$ in $L(G)$. A strong
edge-colouring is a proper vertex-colouring of $L(G)^2$, so
$\chi'_s(G) = \chi(L(G)^2)$. Every published improvement on the
trivial bound comes from a sparsity bound on
$|E(L(G)^2[N_s(e)])|$ fed into a probabilistic colouring lemma;
the present paper improves the sparsity input via local flag
algebras.

The relevant probabilistic black box is a sparse colouring lemma,
stated for the following sparsity notion: a graph $H$ is
\emph{$\varsigma$-sparse} if every vertex neighbourhood spans few
edges, $|E(H[N_H(v)])| \leq (1 - \varsigma)\binom{\Delta(H)}{2}$ for
every $v \in V(H)$.

\begin{theorem}[sparse colouring lemma~{\cite{hurleyImprovedProcedureColouring2022}}]
\label{thm:hjk-colouring}
For every $\varsigma \in (0, 1]$ and $\iota > 0$ there is a
$\Delta_0 = \Delta_0(\varsigma, \iota)$ such that every
$\varsigma$-sparse graph $H$ with $\Delta(H) \geq \Delta_0$ satisfies
\[
\chi(H) \leq (1 - \varepsilon(\varsigma) + \iota)\,\Delta(H),
\qquad
\varepsilon(\varsigma) := \varsigma/2 - \varsigma^{3/2}/6.
\]
\end{theorem}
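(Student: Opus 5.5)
We would prove Theorem~\ref{thm:hjk-colouring} with a one-round randomised partial colouring followed by a greedy finish, in the style of Molloy--Reed. Fix $\varsigma,\iota$, let $\Delta=\Delta(H)$ be large, and take the palette $[k]$ with $k=\lfloor(1-\varepsilon(\varsigma)+\iota)\Delta\rfloor$. The point is that in a $\varsigma$-sparse graph each vertex $v$ has at least about $\varsigma\binom{\Delta}{2}$ non-adjacent pairs inside $N_H(v)$. Two non-adjacent neighbours of $v$ may legally share a colour, and every such repetition saves $v$ one colour. If every vertex saves about $\varepsilon\Delta$ colours in this way, a greedy completion with $k$ colours succeeds.

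\textbf{Step 1: the random round.} Activate each vertex independently with probability $p$, to be optimised later. Give each activated vertex a uniformly random colour from $[k]$. An activated vertex \emph{retains} its colour if no activated neighbour received the same colour. Fix $v$. A non-adjacent pair $u,w\in N_H(v)$ both retain a common colour with probability about $p^2\cdot\frac1k\cdot e^{-c p}$, where $c\approx 2$ up to correction terms from the retention events of $u$ and $w$. From this we get the expected number of colours that appear at least twice on retained neighbours of $v$. To avoid overcounting, we would count colours appearing exactly twice, or use inclusion--exclusion for repeats. This gives an expected saving $S_v\gtrsim \varsigma\, g(p)\,\Delta$ for an explicit function $g$.

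\textbf{Step 2: the finish.} After the round, $v$ still has at least $k-(\#\text{colours on retained neighbours})$ colours available. It has at most $\Delta-(\#\text{retained neighbours})$ uncoloured neighbours. The available count exceeds the uncoloured-neighbour count by at least $k-\Delta+S_v$. So the greedy extension succeeds once $S_v\ge(\varepsilon-\iota/2)\Delta$ for every $v$ simultaneously. We would not fix the activation at a single value of $p$. Instead we would let it depend on the local sparsity level: a vertex whose neighbourhood is only barely $\varsigma$-sparse behaves differently from a denser-saving one. We would then optimise $p$ against the worst case $\varsigma$. We expect this optimisation, together with the second-order correlation terms, to produce exactly $\varsigma/2-\varsigma^{3/2}/6$, the $\varsigma^{3/2}$ arising from an optimal $p\asymp\sqrt{\varsigma}$. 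Getting this constant exactly, rather than a weaker $c\varsigma$, is the delicate calculus part. If a single round falls short, we would iterate the round on the uncoloured subgraph, tracking degrees and savings, as in HJK.

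\textbf{Step 3: concentration and the main obstacle.} We need $S_v$ and the retained-neighbour counts to be within $o(\Delta)$ of their means for all $v$ at once. For this we would apply Talagrand's inequality to the number of colours retained on at least two neighbours of $v$. That quantity is Lipschitz in the colour and activation trials of vertices within distance $2$ of $v$, and $r$-certifiable with $r=O(1)$. The estimates would then be combined with the Lov\'asz Local Lemma, since each bad event $B_v$ depends on events within distance $4$, i.e.\ on $\Delta^{O(1)}$ others. The main obstacle is this concentration step: the repeated-colour count is not directly certifiable from a bounded set of trials, because retention depends on non-activation of neighbours. We would handle this, following Molloy--Reed, by writing it as the difference of two certifiable counts: colours assigned to at least two non-adjacent neighbours, minus those assigned there but later uncoloured. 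We would then concentrate each count separately.
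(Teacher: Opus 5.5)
The paper gives no proof of this statement; it imports it as a black box from \cite{hurleyImprovedProcedureColouring2022}. Read as a self-contained proof, your proposal has a genuine gap. The gap is in the expectation computation of Step~1, not in concentration: a single round of the naive procedure cannot deliver a saving with leading term $\varsigma/2$. A colour on $m\ge1$ retained neighbours of $v$ saves $m-1\le\binom{m}{2}$. So $S_v$ is at most the number of non-adjacent pairs $u,w\in N_H(v)$ retaining a common colour. By your own computation a given pair does so with probability $\tfrac{p^2}{k}(1-\tfrac{p}{k})^{|N_H(u)\cup N_H(w)|}\le\tfrac{p^2}{k}e^{-p\Delta/k}$ when $H$ is $\Delta$-regular. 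Take a vertex whose neighbourhood has exactly $\varsigma\binom{\Delta}{2}$ non-edges, and a uniform $p\in[0,1]$ (which is what your sparsity-dependent rule produces on a vertex-transitive graph). Then $\mathbb{E}S_v\lesssim\tfrac{\varsigma}{2}\Delta\cdot\tfrac{\Delta}{k}p^2e^{-p\Delta/k}\le e^{-1}\cdot\tfrac{\varsigma}{2}\Delta$, since $\Delta/k\le 3/2$ and $a e^{-a}\le e^{-1}$. But you need $\varepsilon(\varsigma)\Delta\ge\tfrac{\varsigma}{3}\Delta$. Concretely, at $\varsigma=1$ take a $\Delta$-regular graph of girth at least $5$. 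Non-adjacent neighbours share only $v$, and the cap is about $\tfrac{1}{3e^2}\Delta<0.05\Delta$ against the required $\Delta/3$. Your suggested $p\asymp\sqrt{\varsigma}$ is worse still: it gives $S_v=O(\varsigma^2\Delta)$. So optimising $p$ cannot produce $\varsigma/2-\varsigma^{3/2}/6$, and the $\varsigma^{3/2}$ term does not come from the activation probability.

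The constant $\varsigma/2-\varsigma^{3/2}/6$ is exactly what one gets under two conditions: essentially all of $N_H(v)$ is coloured, and non-adjacent pairs (respectively independent triples) of $N_H(v)$ are monochromatic with probability about $1/\Delta$ (respectively at most about $1/\Delta^2$). The inequality $m-1\ge\binom{m}{2}-\binom{m}{3}$ for $m\ge1$ then bounds the saving below by the number of non-edges divided by $\Delta$, minus the number of independent triples divided by $\Delta^2$. Kruskal--Katona, applied to the complement of $H[N_H(v)]$ in the worst case of exactly $\varsigma\binom{\Delta}{2}$ non-edges, caps those triples at about $\varsigma^{3/2}\binom{\Delta}{3}$. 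Reaching this needs a procedure that colours almost all of every neighbourhood while keeping retention close to $1$. One route is many rounds with vanishing activation, tracking list sizes, residual degrees and accumulated repeats throughout; another is the modified procedure of the cited work. That analysis is the whole substance of the theorem, and your sentence ``iterate the round \dots\ as in HJK'' defers it rather than supplying it. By contrast, your Step~2 and the Talagrand/Local Lemma bookkeeping of Step~3 are standard and essentially fine; concentration is not the main obstacle here.
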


We use the lemma in a mildly more flexible form, measuring sparsity
against a stated degree scale rather than the exact maximum degree.

\begin{corollary}[degree-scale form]
\label{cor:hjk-scale}
For every $\varsigma \in (0, 1]$ and $\iota > 0$ there is a
$\Delta_0$ such that the following holds for every $D \geq \Delta_0$.
If $H$ is a graph with $\Delta(H) \leq D$ and
$|E(H[N_H(v)])| \leq (1 - \varsigma)\binom{D}{2}$ for every
$v \in V(H)$, then $\chi(H) \leq (1 - \varepsilon(\varsigma) + \iota)\,D$.
\end{corollary}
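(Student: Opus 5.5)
The plan is to reduce Corollary~\ref{cor:hjk-scale} to Theorem~\ref{thm:hjk-colouring} by embedding $H$ into a graph $H'$ whose maximum degree is exactly $D$, while keeping every vertex neighbourhood sparse relative to $\binom{D}{2}$. If $\Delta(H) = D$ the hypothesis already says that $H$ is $\varsigma$-sparse, and the theorem applies with the same $\varsigma$ and $\iota$. The only issue is therefore the case $\Delta(H) < D$.

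\textbf{Padding construction.} Suppose $\Delta(H) < D$. Take $H$ together with a disjoint star $K_{1,D}$, and call the union $H'$.
\begin{itemize}[nosep]
\item $\Delta(H') = D$, and $\chi(H') = \max(\chi(H), 2) \geq \chi(H)$.
\item The centre of the star has a neighbourhood spanning no edges, and each leaf has a single neighbour, so both satisfy the sparsity bound trivially.
\item Every vertex $v$ of $H$ keeps its neighbourhood, so $|E(H'[N_{H'}(v)])| = |E(H[N_H(v)])| \leq (1-\varsigma)\binom{D}{2} = (1-\varsigma)\binom{\Delta(H')}{2}$.
\end{itemize}
Hence $H'$ is $\varsigma$-sparse with $\Delta(H') = D \geq \Delta_0(\varsigma,\iota)$. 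Theorem~\ref{thm:hjk-colouring} then gives
\[
\chi(H) \leq \chi(H') \leq (1-\varepsilon(\varsigma)+\iota)\,D.
\]
We take $\Delta_0$ to be the constant supplied by Theorem~\ref{thm:hjk-colouring}.

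\textbf{Main caveat.} The step to check is that Theorem~\ref{thm:hjk-colouring} really uses only the global maximum degree and the stated per-vertex sparsity, with no connectivity or regularity assumption. It is stated that way here, so disjoint padding is legitimate. Should connectivity ever be needed, the fallback is to attach the star to a single vertex of $H$ via a pendant path. This raises one degree by at most one, which is harmless when $\Delta(H) < D$, since that vertex then has degree at most $D$.
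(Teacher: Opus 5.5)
Your proof is correct and is essentially the paper's argument. The paper pads $H$ with a disjoint $K_{D,D}$ rather than your star $K_{1,D}$, but either padding raises the maximum degree to exactly $D$ without creating edges inside any neighbourhood, so Theorem~\ref{thm:hjk-colouring} applies directly to the union (and your connectivity fallback is never needed).
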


\begin{proof}
The disjoint union $H' := H \cup K_{D,D}$ has $\Delta(H') = D$, its
new vertices have independent neighbourhoods, and its old vertices
keep theirs, so $H'$ is $\varsigma$-sparse.
Theorem~\ref{thm:hjk-colouring} gives
$\chi(H) \leq \chi(H') \leq (1 - \varepsilon(\varsigma) + \iota)\,D$.
\end{proof}

We cannot apply Theorem~\ref{thm:hjk-colouring} to $H = L(G)^2$
directly. Bruhn and Joos~\cite{bruhnStrongerBoundStrong2018} showed
that a strong neighbourhood can span $\tfrac{3}{2}\Delta(G)^4 +
5\Delta(G)^3$ edges of $L(G)^2$ and that this is asymptotically best
possible, which caps the sparsity of $L(G)^2$ itself at
$\varsigma = 1/4$ --- too weak for the bounds we pursue. (The
companion paper reproduces exactly this $\tfrac{3}{2}$ from a size-$4$
local flag program.) Following Bonamy, Perrett and
Postle~\cite{bonamyColouringGraphsSparse2018}, as adapted by Hurley,
de~Joannis~de~Verclos, and Kang~\cite{hurleyImprovedProcedureColouring2022}, we circumvent the
obstacle by restricting to a subgraph of high minimum degree. Fix
$\eta$ and let $F \subseteq V(L(G)^2)$ be a maximal set on which the
induced subgraph has minimum degree at least $(2 - \eta)\Delta(G)^2$;
since the union of two such sets is again one, $F$ contains every
such set. The local flag SDP --- invoked after the regularity
reduction of Lemma~\ref{lem:wlog-regular} --- bounds the edge density
of the strong neighbourhoods \emph{within $F$}, and
Corollary~\ref{cor:hjk-scale} colours $L(G)^2[F]$ at the degree scale
$2\Delta(G)^2$. The colouring then extends to all of $L(G)^2$ by the
criticality argument from the proof
of~\cite[Theorem~1.6]{hurleyImprovedProcedureColouring2022}: if
$\chi(L(G)^2) > t$ for
$t := \max\bigl(\chi(L(G)^2[F]),\, \lceil(2 - \eta)\Delta(G)^2\rceil + 1\bigr)$,
then a minimal vertex set $F'$ with $\chi(L(G)^2[F']) > t$ would be
vertex-critical, hence of induced minimum degree at least
$t - 1 \geq (2 - \eta)\Delta(G)^2$, forcing $F' \subseteq F$ and the
contradiction $\chi(L(G)^2[F']) \leq \chi(L(G)^2[F]) \leq t$. In short,
\begin{equation}
\label{eq:criticality-lift}
\chi(L(G)^2)
\leq \max\bigl(\chi(L(G)^2[F]),\,
\lceil(2 - \eta)\Delta(G)^2\rceil + 1\bigr).
\end{equation}
We phrase the density input as a problem on a class of
$(2,2)$-coloured graphs in Section~\ref{sec:general} (general case)
and $(4,2)$-coloured bipartite graphs in
Section~\ref{sec:bipartite}, recording membership of $F$ by an edge
colour.

\subsection{A reduction to regular graphs}

The local flag SDP operates on a class of regular graphs. We give the
following standard reduction, which shows that the regularity
hypothesis costs nothing.

\begin{lemma}[regular suffices]
\label{lem:wlog-regular}
Let $c > 0$. If there is a $\Delta_0$ such that every
$\Delta$-regular graph $G'$ with $\Delta(G') \geq \Delta_0$ satisfies
$\chi'_s(G') \leq c\,\Delta(G')^2$, then there is a $\Delta_1$ such
that every graph $G$ with $\Delta(G) \geq \Delta_1$ satisfies
$\chi'_s(G) \leq c\,\Delta(G)^2$. The same holds with the additional
hypothesis ``bipartite'' on both sides.
\end{lemma}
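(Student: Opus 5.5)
Given $G$ with maximum degree $\Delta := \Delta(G)$, I will embed $G$ as a subgraph of a $\Delta$-regular graph $G'$ with $\Delta(G') = \Delta$ (and with $G'$ bipartite when $G$ is). This is enough because the strong chromatic index is monotone under taking subgraphs. If $H \subseteq G'$, then two edges of $H$ at distance at most $2$ in $L(H)$ are also at distance at most $2$ in $L(G')$. So $L(H)^2$ is a subgraph of $L(G')^2[E(H)]$, and any strong edge-colouring of $G'$ restricts to one of $H$. Setting $\Delta_1 := \Delta_0$, we then get $\chi'_s(G) \le \chi'_s(G') \le c\,\Delta(G')^2 = c\,\Delta^2$.

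For the general case I will use the classical König-type doubling construction, done in the bipartite-friendly form so that one construction covers both statements. Let $G$ have maximum degree $\Delta$, and write $\operatorname{def}(v) := \Delta - \deg(v)$.
\begin{enumerate}
\item First make $G$ bipartite-embedded. If $G$ is not bipartite, replace it by its bipartite double cover $G \times K_2$. This graph has the same degrees and contains a copy of $G$ as a subgraph? It does not contain $G$ when $G$ has odd cycles, so this step must be avoided. Instead I will work directly with copies of $G$.
\item Take $\Delta$ disjoint copies $G_1, \dots, G_\Delta$ of $G$.
\item For each vertex $v$ of $G$, the copies $v_1, \dots, v_\Delta$ each have deficit $\operatorname{def}(v)$. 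Add a $\operatorname{def}(v)$-regular graph on $\{v_1, \dots, v_\Delta\}$.
\end{enumerate}
This auxiliary graph is where the main care is needed, since it must be simple and must exist. A $d$-regular graph on $\Delta$ vertices exists whenever $0 \le d \le \Delta - 1$ and $d\Delta$ is even, which fails in the parity case. To avoid parity issues I will instead use $2\Delta$ copies and put on $\{v_1, \dots, v_{2\Delta}\}$ a $\operatorname{def}(v)$-regular bipartite graph between the first $\Delta$ and the last $\Delta$ copies. For instance, take the union of $\operatorname{def}(v) \le \Delta$ perfect matchings $v_i \mapsto v_{\Delta + ((i + k) \bmod \Delta)}$ for $k = 0, \dots, \operatorname{def}(v)-1$. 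These matchings are edge-disjoint circulant matchings, so the union is simple. The new edges join copies of the same vertex, so they never duplicate edges inside a copy. Every vertex now has degree exactly $\Delta$, and $G$ sits inside as $G_1$.

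For the bipartite addendum, suppose $G$ has sides $A$ and $B$. Place copies $1, \dots, \Delta$ with their original sides, and copies $\Delta+1, \dots, 2\Delta$ with the sides swapped, so that $A_j$ is on the $B$-side for $j > \Delta$. Each added edge $v_i v_{\Delta + \ell}$ then joins opposite sides. Hence $G'$ is bipartite and $\Delta$-regular, and the hypothesis applies to it.

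The only step I regard as an obstacle is the one above: ensuring the padding is simple and parity-free, and, in the bipartite case, that it preserves bipartiteness. The two-block circulant-matching construction handles both.
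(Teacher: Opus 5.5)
Your construction is correct. It uses the same principle as the paper's proof: take disjoint copies of $G$, add padding edges only between copies of the same vertex $v$, arranged as a $(\Delta-\deg(v))$-regular bipartite graph, and swap the sides in half of the copies so that bipartiteness survives.

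The difference lies in the padding gadget.
\begin{itemize}
\item The paper iterates a doubling step at most $\Delta-\delta(G)$ times: copy the current graph and join each deficient vertex to its twin. Unwinding the iteration, the copies of $v$ form disjoint hypercubes of dimension $\Delta-\deg(v)$ inside up to $2^{\Delta}$ copies of $G$.
\item You do it in one shot, with $2\Delta$ copies and a union of $\Delta-\deg(v)$ circulant perfect matchings between the two halves.
\end{itemize}
Your version gives a host that is linear rather than exponential in $\Delta$ and avoids the round-by-round degree bookkeeping. The price is that you must check the gadget is simple and regular on both sides. The paper's version needs nothing beyond a single matching per round. You are also right that plain subgraph monotonicity of $\chi'_s$ suffices: strong adjacency in a subgraph implies strong adjacency in the host. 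So the inducedness that the paper verifies is not actually needed.

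Two cleanups are needed before this is a finished proof.
\begin{enumerate}
\item Delete the abandoned Step~1 (the bipartite double cover) and the $\Delta$-copy attempt, since neither is used.
\item Fix the indexing of the matchings. As written, $\Delta+((i+k)\bmod\Delta)$ ranges over $\{\Delta,\dots,2\Delta-1\}$. Whenever $i+k\equiv 0 \pmod{\Delta}$ it therefore lands on $v_\Delta$, which is in the first block. This produces a loop at $i=\Delta$, $k=0$, and, for $k\geq 1$, an edge inside the first block, which joins two same-side vertices in the bipartite case. Using $\Delta+1+((i+k)\bmod\Delta)$, which ranges over $\{\Delta+1,\dots,2\Delta\}$, gives exactly the regular bipartite circulant gadget you intend.
\end{enumerate}
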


\begin{proof}
Let $G$ be a graph of maximum degree $\Delta = \Delta(G)$. We
construct a $\Delta$-regular host $\widehat{G}$ of which $G$ is an
induced subgraph and such that $\chi'_s(G) \leq \chi'_s(\widehat{G})$;
the hypothesised bound on $\widehat{G}$ then pulls back to $G$.

Form $G_0 := G$ and $G_{i+1}$ from $G_i$ by taking a disjoint copy
$G_i'$ of $G_i$ on a fresh vertex set and adding an edge between each
\emph{deficient} vertex $v$ of $G_i$ (one with $\deg_{G_i}(v) < \Delta$)
and its copy $v' \in V(G_i')$. The construction preserves $\Delta$:
the added edges incident to $v$ raise $\deg(v)$ by at most one, and
$v$ was deficient. After at most $\Delta - \delta(G)$ iterations
every vertex is non-deficient, so $G_k$ is $\Delta$-regular for some
$k \leq \Delta$. The inclusion $G \subseteq G_k$ stays induced
because edges between $V(G_i)$ and $V(G_i')$ exist only between
corresponding vertices of the two copies. Hence
$\chi'_s(G) \leq \chi'_s(G_k)$. Setting $\widehat{G} := G_k$
finishes the proof.

The bipartite version uses the same construction; an added edge
$vv'$ runs between the two copies of $G$, which carry compatible
bipartitions (the second copy's bipartition swaps the
first's), so the added edge runs between opposite parts of the
combined bipartition. Hence $G_k$ is bipartite whenever $G_0$ is.
\end{proof}

Throughout
Sections~\ref{sec:general}--\ref{sec:asym-sec} we assume $G$ regular;
Lemma~\ref{lem:wlog-regular} (extended to the asymmetric setting in
Section~\ref{sec:asym-sec}) removes this assumption from the headline
statements.

\section{A general bound}
\label{sec:general}

We prove Theorem~\ref{thm:sec-general}. The strategy mirrors
Section~\ref{sec:problem}: reduce the strong-neighbourhood density to
a local-flag-algebra problem on a $(2,2)$-coloured class, define an
objective vector, and exhibit an SDP decomposition of
$\lambda \emptyset - O$ in $\Lcl^\emptyset_5$ at
$\lambda = 10.644$, which, fed into the sparse colouring lemma,
yields the headline bound.

\subsection{The strong-neighbourhood density input and a coloured reduction}

After the regularity reduction (Lemma~\ref{lem:wlog-regular}) we
assume $G$ is $\Delta$-regular. Following Section~\ref{sec:problem},
fix $\eta \in [0, 0.3]$ and a maximal $F \subseteq V(L(G)^2)$ on which
$L(G)^2[F]$ has minimum degree at least $(2 - \eta)\Delta(G)^2$; we
seek an upper bound on $|E(L(G)^2[N_{L(G)^2[F]}(f)])|$, uniform in
$f \in F$.

Given $(G, F, f)$ with $f = \{u, v\} \in F$, construct a
$(2,2)$-coloured graph $G'$ on $V(G') = V(G)$ and $E(G') = E(G)$ by
colouring a vertex \emph{black} if it lies in $N(u) \cup N(v)$ and
\emph{red} otherwise, and colouring an edge \emph{black} if it lies
in $F$ and \emph{red} otherwise. Then $u, v$ are black, $f$ is a
black edge with black endpoints, and $G'$ has at most $2\Delta$ black
vertices; every strong $F$-neighbour of $f$ is a black edge with an
endpoint in $N(u) \cup N(v)$, hence with a black vertex. The incident pairs are
negligible at the leading order: pairs $\{e, e'\}$ that are
\emph{incident} in $G$ (share a vertex) contribute $O(\Delta^3)$ in
total --- choose the shared vertex ($\leq 2\Delta$ choices, since it
must be incident to $f$ or to a neighbour of $f$) and then two
further endpoints ($\leq \Delta^2$ choices) --- so relative to the
leading $\Delta^4$ scale of $|E(L(G)^2[N_s(f)])|$ we may restrict to
non-incident strong-neighbour pairs at no cost.

Let $E_O(G')$ be the set of unordered pairs $\{e, e'\} \subseteq
E(G')$ of non-incident \emph{black} edges sharing a common incident
edge in $G'$, with at least one black vertex on each of $e$ and
$e'$. The two observations give
\begin{equation}
\label{eq:EO-bridge}
|E(L(G)^2[N_{L(G)^2[F]}(f)])|
= |E_O(G')| + o(\Delta(G)^4).
\end{equation}
The minimum degree property of $F$ also transfers: each black edge
$e$ of $G'$ has at least $(2 - \eta)\Delta^2$ strong neighbours that
are black edges, and discarding the $O(\Delta)$ of them incident to
$e$ leaves at least $(2 - \eta)\Delta^2 - O(\Delta)$ non-incident
ones. We work with the class $\Gcl$ of $\Delta$-regular
$(2,2)$-coloured graphs with at most $2\Delta$ black vertices in
which every black edge has at least $(2 - \eta)\Delta^2 -
o(\Delta^2)$ non-incident black strong neighbours (the asymptotic
form of the two properties above). Locality of
$\sigma$-flags in
$\Gcl$ obeys the same rule as for the local pentagon class: a
$\sigma$-flag is local if and only if every connected component
contains a labelled or a black vertex.

\subsection{An objective vector}

The local-flag-algebra objective decomposes by the type of the
two-edge configuration in $E_O$. Let $\Bcl(G') \subseteq E(G')$ be
the set of black edges with at least one black endpoint, and
partition $\Bcl(G') = \Bcl_1(G') \cup \Bcl_2(G')$ according to
whether the edge has one or two black endpoints. With
$\sigma_1 := \bredge$ (one black, one red, joined by a black edge)
and $\sigma_2 := \edge$ (two black, joined by a black edge) --- both
local types --- the density of pairs $\{e, e'\}$ with $e$ of type
$\sigma_i$ contributing to $E_O$ takes the form
$\rho(\llbracket D(\sigma_i)\rrbracket; G')$, where $D(\sigma_i)$
is the sum over size-$4$ local $\sigma_i$-flags whose two unlabelled
vertices form a black edge connected to the labelled positions.
The calculation is the combinatorial bridge identity of
Section~\ref{sec:cert-general} (Lemma~\ref{lem:cbi-gen}), the
two-colour specialisation of the four-colour
accounting that Section~\ref{sec:bipartite} uses. The conclusion is
that
\[
\rho\bigl(2\,\llbracket D(\bredge) \rrbracket
+ \llbracket D(\edge) \rrbracket;\, G'\bigr)
= \frac{2\,|E_O(G')|}{\binom{\Delta(G')}{2}^2} + o(1)
\]
as $\Delta(G') \to \infty$. To express the objective in a fixed
size, we lift each summand to size $n = 5$ via the
multiplication-by-extension identity
$\phi(\llbracket f \cdot \ext_i^\sigma \rrbracket) =
\phi(\llbracket f \rrbracket)$ of Section~\ref{sec:framework}, and
set
\[
O := 2\,\bigl\llbracket D(\bredge)\cdot
  (\ext_1^{\bredge})^{n - 4} \bigr\rrbracket
+ \bigl\llbracket D(\edge)\cdot
  (\ext_1^{\edge})^{n - 4} \bigr\rrbracket
\in \Lcl^\emptyset_5.
\]

\subsection{An SDP bound}

We exhibit an SDP decomposition of $\lambda\emptyset - O$ in
$\Lcl^\emptyset_5$ at $\lambda = 10.644$, packaged as the
following lemma; we prove it in Section~\ref{sec:cert-general}.

\begin{lemma}[general strong edge-colouring density bound]
\label{lem:sec-general-density}
\[
\tfrac{10644}{1000}\,\emptyset - O
\in \SemCone^\emptyset
\quad\text{in }\Lcl^\emptyset_5,
\]
where $\Gcl$ is the class above. In particular,
$\phi(O) \leq 10.644$ for every $\phi \in \Phi^\emptyset$.
\end{lemma}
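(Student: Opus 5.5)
The plan is to exhibit an explicit certificate of the form \eqref{eq:sdp-decomp} with $\lambda = 10.644$ and $f = O$, entirely inside $\Lcl^\emptyset_5$. First, I will enumerate the local $\emptyset$-flags of size $5$ in the $(2,2)$-coloured class $\Gcl$. By the locality rule, these are exactly the coloured graphs in which every component contains a black vertex. I will then expand $O$ in this basis, using the chain rule to push $D(\bredge)\cdot(\ext_1^{\bredge})$ and $D(\edge)\cdot(\ext_1^{\edge})$ up to size $5$ and then averaging. The class-specific linear constraints will also be written as elements of $\Lcl^\emptyset_5$:
\begin{itemize}
\item regularity, through the extension differences $\llbracket \ext_p^\tau - \ext_q^\tau\rrbracket$ for local types $\tau$ of size at most $4$, lifted to size $5$;
\item the bound of at most $2\Delta$ black vertices, as the constraint that the density of a black vertex is at most $2$;
\item the minimum-degree condition on $F$, as $\llbracket S(\sigma_i) - (2-\eta)\rrbracket \geq 0$, where $S(\sigma_i)$ counts the non-incident black strong neighbours of a labelled black edge.
\end{itemize}
All of these lie in $\SemCone^\emptyset$ by Section~\ref{sec:framework}, or by the definition of $\Gcl$ passed to the limit.

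Next, I will choose the local types $\sigma_j$ of size $3$ whose flags of size $4$ combine under the local flag product into size $5$. These are the coloured paths and triangles on three labelled vertices with at least one black vertex, together with the size-$1$ black-vertex type. For each such type I will set up a PSD matrix $Q_j$ indexed by the local $\sigma_j$-flags of size $4$, so that $\llbracket h_j^2\rrbracket$ is replaced by $\llbracket v_j^\top Q_j v_j\rrbracket$. I will then solve the SDP that minimises $\lambda$ subject to coefficient-wise equality in $\Lcl^\emptyset_5$, with nonnegative multipliers $\alpha_i$ on the linear constraints. Following the convention of the companion paper, the floating-point solution will be rounded:
\begin{itemize}
\item each $Q_j$ is replaced by a rational matrix with an exact integer $LDL^\top$ factorisation with $D \geq 0$;
\item the $\alpha_i$ are rounded down to nonnegative rationals;
\item the residual $\lambda\emptyset - O - \sum\alpha_i g_i - \sum \llbracket v_j^\top Q_jv_j\rrbracket$ is computed exactly, and each coefficient is checked to be nonnegative.
\end{itemize}
A nonnegative residual is absorbed because every flag $F \in \Gloc^\emptyset$ has $\phi(F) \geq 0$. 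The gap between the solver's optimum, presumably slightly below $10.644$, and the stated $\lambda$ serves as the slack budget for this step.

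The conclusion $\phi(O) \leq 10.644$ then follows from three facts. Each $\llbracket h^2\rrbracket$ is in $\SemCone^\emptyset$, each $g_i$ is in $\SemCone^\emptyset$, and the cone is closed under nonnegative combinations. Since $\phi(\emptyset)=1$, applying any $\phi$ gives the bound.

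I expect the main obstacle to be the rounding step rather than the derivation. The SDP optimum is typically degenerate, with PSD blocks having near-zero eigenvalues, so naive rounding breaks positive semidefiniteness or leaves negative residual coefficients. I will first project onto the numerically detected kernel: restrict each $Q_j$ to the span of its large-eigenvalue eigenvectors, round within that basis, and then re-solve the equality constraints exactly for the $\alpha_i$ by rational linear algebra. A second source of delicacy is the $o(\Delta^2)$ slack in the minimum-degree constraint and the choice of $\eta$. The certificate must be valid for the specific $\eta$ later used in the colouring step, so I will fix $\eta$ as an explicit rational before solving. Finally, I will double-check the exact encoding of $D(\sigma_i)$ against the bridge identity of Lemma~\ref{lem:cbi-gen}, since the factor $2$ on $\bredge$ is easy to get wrong in the enumeration.
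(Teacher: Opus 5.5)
Your architecture matches the paper's. It uses a size-$5$ basis of local $\emptyset$-flags and Cauchy--Schwarz blocks on local types of orders $1$ and $3$ certified by exact $LDL^\top$ identities. It adds nonnegative multiples of the extension-difference, black-vertex and minimum-degree relations, and closes with an exact flag-by-flag identity.

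The gap is that this lemma is a purely numerical claim, and the relaxation you specify is thinner than the one the paper certifies, where there is essentially no room. The paper tunes $\eta = 0.2703$ so that both branches of the criticality lift land at about $1.7297$--$1.7298$. So $10.644$ sits essentially at the optimum of its \emph{full} relaxation: the margin to $1.73$ is about $0.0002$ in the constant, or roughly $0.004$ in $\lambda$. Two ingredients of that relaxation are missing from your list.
\begin{itemize}
\item You impose the black-vertex bound only at the root, $2\emptyset - \bullet \geq 0$ with $\bullet$ the one-vertex black flag. The certificate instead uses $\llbracket 2\ext_i^\sigma - \ext_{B,1}^\sigma\rrbracket \in \SemCone^\emptyset$ for every local type $\sigma$ with $|\sigma| \leq 4$. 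Semantically these rooted relations are the root-level bound multiplied by the density of $\sigma$; for $\sigma = \bullet$ they amount, up to normalisation, to $\phi(\bullet)(2-\phi(\bullet)) \geq 0$. They are extra valid cuts, not consequences of your single inequality. Section~\ref{sec:hjk-compare} credits this lift with roughly $0.002$--$0.005\,\Delta^2$ in the final constant, which is about $0.05$--$0.1$ in $\lambda$.
\item Coloured paths and triangles containing a black vertex are only $33$ of the local $3$-vertex types up to isomorphism. The paper's $38$ order-$3$ blocks match the full count once the five disconnected local types are included: an edge containing a black vertex plus an isolated black vertex, and three isolated black vertices.
\end{itemize}
As written, your SDP may well have optimum above $10.644$ at $\eta = 0.2703$, so the plan would fail at the solve step, before any rounding. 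You need the disconnected order-$3$ blocks and the rooted black-vertex relations as well.

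There are two smaller points.
\begin{itemize}
\item With local densities normalised by $\binom{\Delta}{2}$, the minimum-degree relation is $\phi(D'(\sigma_1)) \geq 2(2-\eta)$. It enters homogeneously as $\llbracket (D'(\sigma_1) - 2(2-\eta)(\ext_1^{\sigma_1})^2)\cdot\ext_1^{\sigma_1}\rrbracket \geq 0$. Your $S(\sigma_i) - (2-\eta)$ halves the threshold unless $S$ is normalised by $\Delta^2$. Also, for the order-$1$ block the inner flags have order $3$, not $4$.
\item The local algebra has no identity $\emptyset = \sum_j F_j$, since local densities do not sum to $1$. So the gap between $10.644$ and the solver optimum lives only on the $\emptyset$-coordinate. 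It does not automatically absorb negative size-$5$ residuals as in Razborov's setting. It reaches those coordinates only in two ways: by raising the multiplier of the root-level black-vertex relation, which adds slack exactly on the support of the lifted black vertex, or through explicit a priori bounds $\phi(F_j) \leq M_j$.
\end{itemize}
Once that mechanism is made explicit, your exact coefficient-wise nonnegativity check is a sound way to close. The paper instead adds Tikhonov shifts $\lambda_b I$ to its blocks and controls an aggregated weighted residual by a slack budget.
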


Section~\ref{sec:cert-general} exhibits and verifies the explicit SDP
certificate. Its one non-cone-standard ingredient is a minimum degree
constraint $\phi(D'(\sigma_1)) \geq 2(2 - \eta)$ at $\eta := 0.2703$,
imposed on the objective's black-red type $\sigma_1$, where $D'(\sigma)$
is the \emph{strong-degree flag} --- the number of black
$L(G)^2$-edges adjacent to a type-$\sigma$ black edge. This is the
asymptotic form of the property defining $F$: black edges lie in $F$,
and $L(G)^2[F]$ has minimum degree at least $(2 - \eta)\Delta(G)^2$
by construction (Section~\ref{sec:problem}).

\begin{proof}[Proof of Theorem~\ref{thm:sec-general}]
By Lemma~\ref{lem:wlog-regular} it suffices to bound $\chi'_s(G)$ on
$\Delta$-regular graphs $G$ with $\Delta$ sufficiently large. Fix
$\eta := 0.2703$ and a maximal $F \subseteq V(L(G)^2)$ on which
$L(G)^2[F]$ has minimum degree at least $(2 - \eta)\Delta(G)^2$. For
an edge $f \in F$ let $G'$ be the $(2,2)$-coloured reduction at
$(G, F, f)$; by construction $G' \in \Gcl$.

Let $(G'_k)$ be any $\Delta$-increasing sequence in $\Gcl$ converging
to a limit functional $\phi \in \Phi^\emptyset$ whose objective density
agrees with $\rho(O; G')$ up to $o(1)$. By
Lemma~\ref{lem:sec-general-density}, $\phi(O) \leq 10.644$. Convert
back to an edge count by~\eqref{eq:EO-bridge}: since $\rho(O; G') =
2|E_O(G')|/\binom{\Delta}{2}^2 + o(1)$,
\[
\frac{|E_O(G')|}{\Delta(G)^4}
\leq \frac{10.644}{8} + o(1)
= 1.3305 + o(1),
\]
and~\eqref{eq:EO-bridge} gives
$|E(L(G)^2[N_{L(G)^2[F]}(f)])| \leq 1.3305\,\Delta(G)^4 +
o(\Delta(G)^4)$, uniformly in $f \in F$. Hence $H := L(G)^2[F]$
satisfies the hypotheses of Corollary~\ref{cor:hjk-scale} at the
degree scale $D := 2\Delta(G)^2$: every strong neighbourhood has at
most $2\Delta(G)^2$ edges, so $\Delta(H) \leq D$, and with
$\varsigma := 1 - 10.644/16$ we have $(1 - \varsigma)\binom{D}{2} =
1.3305\,\Delta(G)^4 - O(\Delta(G)^2)$, so the density bound above is
$(1 - \varsigma')$-sparsity at scale $D$ for any fixed
$\varsigma' < \varsigma$ once $\Delta$ is large; we absorb the
difference into $\iota$ below and keep writing $\varsigma$.

Corollary~\ref{cor:hjk-scale} then colours $H$: a short calculation
gives $\varepsilon(\varsigma) = \varsigma/2 - \varsigma^{3/2}/6
\approx 0.135095$, so for any fixed $\iota > 0$,
\[
\chi(L(G)^2[F])
\leq (1 - 0.135095 + \iota)\cdot 2\Delta(G)^2
< (1.72982 + 2\iota)\Delta(G)^2.
\]
Finally the criticality lift~\eqref{eq:criticality-lift} extends the
bound to all of $L(G)^2$: since $2 - \eta = 1.7297$,
\[
\chi(L(G)^2)
\leq \max\bigl((1.72982 + 2\iota)\Delta(G)^2,\,
\lceil 1.7297\,\Delta(G)^2\rceil + 1\bigr),
\]
and for $\iota$ chosen so that $2\iota < 0.00018$ both branches are
strictly below $1.73\Delta(G)^2$ for $\Delta$ large. Hence
$\chi'_s(G) = \chi(L(G)^2) \leq 1.73\,\Delta(G)^2$.
\end{proof}

\subsection{Comparison with the previous best bound}
\label{sec:hjk-compare}

The general-case bound of Theorem~\ref{thm:sec-general} improves the
previous best general bound $\chi'_s(G) \leq 1.772\,\Delta(G)^2$ (for sufficiently large $\Delta(G)$)
of \textcite{hurleyImprovedProcedureColouring2022} by $0.042 \Delta(G)^2$.
Both proofs feed the same sparse colouring lemma
(Theorem~\ref{thm:hjk-colouring}); the difference lies entirely in
the bound on the sparsity input
$|E(L(G)^2[N_s(f)])|/\Delta(G)^4$. The earlier procedure bounds this
input by a structural refinement of the
Bonamy--Perrett--Postle~\cite{bonamyColouringGraphsSparse2018}
local-density estimate, evaluated at a worst-case configuration
within their high-degeneracy reduction. The present paper bounds the
same input by a local-flag-algebra SDP, which optimises directly
over the set $\Phi^\emptyset$ of limit functionals on the coloured
class. Since $\Phi^\emptyset$ embeds into the combinatorial relaxation
underlying the earlier procedure, the SDP returns a strictly smaller
bound; the gap measures
the slack of the combinatorial relaxation.

Three components of the SDP's tighter cone account for the
improvement.

\begin{enumerate}
\item The cone imposes the square-positivity (PSD block) constraints
$\phi(\llbracket f^2 \rrbracket) \geq 0$ for $f$ ranging over
$\Lcl_m^\sigma$ at every local type $\sigma$ and matching size $m$.
The earlier procedure uses no PSD constraint; this is the principal source of the
improvement, accounting for $\approx 0.03$--$0.04\,\Delta(G)^2$.
\item It also enforces extension-vector regularity
$\llbracket \ext_i^\sigma - \ext_j^\sigma \rrbracket \in
\SemCone^\emptyset$, which encodes the asymptotic regularity of the
input graph across all local types $\sigma$ in the SDP basis. The
earlier procedure uses regularity only globally; the local-type lift adds
$\approx 0.005$--$0.01\,\Delta(G)^2$.
\item It further imposes the black-vertex count constraint
$\phi(\ext_{B,1}^\sigma) \leq 2$ at every local type, encoding
$|B(G)| \leq 2\Delta(G)$ in the reduction. The earlier procedure uses an analogue only
at the level of $|F|$ itself; the local-type lift adds
$\approx 0.002$--$0.005\,\Delta(G)^2$.
\end{enumerate}

The three components interact, so the sum is suggestive rather than
exact. As an empirical consistency check, solving the SDP with only the
square-positivity and the minimum degree constraints (no
extension-difference, no black-vertex count) returns a bound
$\approx 1.752\,\Delta(G)^2$, which sits between the previous constant and
the present one. We do not pursue further decomposition; the headline
result is the strict $1.73\,\Delta(G)^2$ obtained from the full
constraint set.

\section{A bipartite bound}
\label{sec:bipartite}

We prove Theorem~\ref{thm:sec-bipartite}. The argument mirrors
Section~\ref{sec:general} but uses a four-colour palette to track
the bipartition combinatorially, expanding the SDP basis and the
objective vector.

\subsection{A bipartite coloured reduction}

Given a bipartite graph $G$ with parts $A, B$, both $\Delta$-regular,
together with $\eta \in [0, 0.4]$, a maximal $F \subseteq V(L(G)^2)$
as in Section~\ref{sec:general}, and an edge $f = \{u, v\} \in F$
with $u \in A$, $v \in B$, construct a $(4,2)$-coloured graph $G''$
by
\begin{itemize}
\item $N(u) \subseteq B$ red; $N(v) \subseteq A$ black;
\item remaining $A$-vertices blue; remaining $B$-vertices green;
\item edges of $F$ black; other edges red.
\end{itemize}
Unlike the general reduction of Section~\ref{sec:general} (where red
marks a non-$f$-side vertex), here both black ($N(v)$) and red ($N(u)$)
are $f$-side neighbourhoods; the colour names are local to each
reduction.
The encoding now stores the bipartition as
$\{\text{black, blue}\} \cup \{\text{red, green}\}$. The class
$\Gcl_{\mathrm{bip}}$ of $(4,2)$-coloured graphs respecting this
encoding, with regular bipartition components of common degree
$\Delta(G'')$, exactly $\Delta(G'')$ black vertices, exactly
$\Delta(G'')$ red vertices, is the bipartite analogue of
Section~\ref{sec:general}'s $\Gcl$. Locality in $\Gcl_{\mathrm{bip}}$ obeys the rule
``every connected component contains a labelled, black, or red
vertex'' (the proof matches Section~\ref{sec:general}; the
additional ``red'' anchor reflects that red vertices are
neighbour-of-$f$ and hence individually constrained).

The same accounting as in Section~\ref{sec:general} gives the
bipartite analogue
\[
|E(L(G)^2[N_{L(G)^2[F]}(f)])|
= |E_O(G'')| + o(\Delta(G)^4),
\]
where $E_O(G'')$ collects unordered pairs of non-incident black
edges sharing a common incident edge with at least one black or red
vertex on each, and the minimum degree property of $F$ transfers to
the class as in Section~\ref{sec:general}: every black edge has at
least $(2 - \eta)\Delta^2 - o(\Delta^2)$ non-incident black strong
neighbours.

\subsection{A bipartite objective}

The bipartite class admits three local edge types contributing to
$E_O$:
\[
\sigma_1' := \bredge,
\quad
\sigma_2' := \blredge,
\quad
\sigma_3' := \bgedge,
\]
where $\blredge$ has one blue and one red labelled vertex joined by a
black edge (a blue-red edge whose red endpoint lies on the $f$-side
neighbourhood) and $\bgedge$ has one black and one green vertex. With $D(\sigma)$ the
sum of size-$4$ local $\sigma$-flags whose two unlabelled vertices
form a black edge connected to the labelled positions and at least
one of them is on the $f$-side (black or red), the
bipartite objective is
\[
O := \bigl\llbracket D(\bredge)\cdot(\ext_1^{\bredge})^{n - 4}
\bigr\rrbracket
+ \bigl\llbracket D(\blredge)\cdot(\ext_1^{\blredge})^{n - 4}
\bigr\rrbracket
+ \bigl\llbracket D(\bgedge)\cdot(\ext_1^{\bgedge})^{n - 4}
\bigr\rrbracket
\in \Lcl^\emptyset_5.
\]

The bipartite analogue of the combinatorial bridge identity is
\[
\rho(O; G'')
= \frac{|E_O(G'')|}{\binom{\Delta(G'')}{2}^2} + o(1).
\]
Section~\ref{sec:cert-bipartite} proves this identity, the size-$5$
basis of cardinality $\ell_{\mathrm{bip}} = 3{,}808$ in
$\Gcl_{\mathrm{bip}}$, and the locality of every basis element, all
by the same certificate-verification template as in the general case.

\subsection{A bipartite SDP bound}

\begin{lemma}[bipartite strong edge-colouring density bound]
\label{lem:sec-bip-density}
At $\eta = 0.3746$,
\[
\tfrac{4093}{1000}\emptyset - O
\in \SemCone^\emptyset
\quad\text{in }\Lcl^\emptyset_5
\text{ on the bipartite class } \Gcl_{\mathrm{bip}}.
\]
In particular, $\phi(O) \leq 4.093$ for every limit functional on
$\Gcl_{\mathrm{bip}}$.
\end{lemma}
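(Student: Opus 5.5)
The claim is a finite certificate statement, so I would prove it by exhibiting an explicit decomposition of the form \eqref{eq:sdp-decomp} in $\Lcl^\emptyset_5$ on $\Gcl_{\mathrm{bip}}$, with $\lambda = 4093/1000$ and $f = O$, and then checking it exactly. First, fix the combinatorial data. Enumerate the size-$5$ local $\emptyset$-flags of $\Gcl_{\mathrm{bip}}$ (the basis of cardinality $\ell_{\mathrm{bip}} = 3{,}808$). Locality is tested by the rule ``every component contains a labelled, black, or red vertex''. Admissibility of colourings is tested against the bipartite encoding $\{\text{black, blue}\} \cup \{\text{red, green}\}$, where edges join only the two sides. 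Then expand $O$ in this basis: each term $\llbracket D(\sigma_i')\cdot(\ext_1^{\sigma_i'})\rrbracket$ is computed by the local flag product of Section~\ref{sec:framework} followed by averaging with the factors $q_\sigma$. Next, list the local types $\sigma_j$ of size $3$ (and size $1$, where the matching flag size is $3$), chosen so that $\llbracket h^2\rrbracket$ for $h \in \Lcl^{\sigma_j}_{m}$ lands in size $5$. For each type, write $\llbracket h^T Q_j h\rrbracket$ explicitly as a linear combination of size-$5$ flags.

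Second, collect the linear cone elements $g_i$. These include:
\begin{itemize}
\item the extension differences $\llbracket \ext_p^\tau - \ext_q^\tau\rrbracket$ at every local type $\tau$, encoding regularity on each side;
\item the exact counts ``$\Delta$ black vertices'' and ``$\Delta$ red vertices'', as equalities $\phi(\ext^\sigma_{B,1}) = 1$ and $\phi(\ext^\sigma_{R,1}) = 1$, usable with either sign;
\item the minimum degree constraint $\phi(D'(\sigma)) \geq 2(2-\eta)$ with $\eta = 0.3746$, imposed at the black edge types $\sigma_1',\sigma_2',\sigma_3'$ and lifted to size $5$ by multiplying by extension vectors.
\end{itemize}
Each $g_i$ is shown to lie in $\SemCone^\emptyset$ from the facts recorded in Section~\ref{sec:framework}, together with the transfer of the $F$-minimum-degree property stated above.

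Third, solve the SDP numerically for the PSD matrices $Q_j$ and multipliers $\alpha_i \ge 0$, then rationalise. The approach is to round each $Q_j$ to a rational matrix and give an integer $LDL^\top$ factorisation with nonnegative $D$, which certifies positive semidefiniteness exactly. The $\alpha_i$ are rounded to nonnegative rationals. The coefficientwise identity is then checked: $\tfrac{4093}{1000}\emptyset - O - \sum\alpha_i g_i - \sum\llbracket h_j^TQ_jh_j\rrbracket$ should have nonnegative coefficients on every size-$5$ flag. Since each flag has nonnegative density, any leftover nonnegative residual is itself in the cone, so it is absorbed into the slack budget. The solver's optimum will be slightly below $4.093$, and that margin is exactly what pays for the rounding. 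This yields membership in $\SemCone^\emptyset$, and $\phi(O)\le 4.093$ follows by applying any $\phi\in\Phi^\emptyset$.

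The main obstacle I expect is the rationalisation step at this scale. With thousands of basis flags, near-singular PSD blocks make naive rounding destroy positive semidefiniteness. I would first project the numerical $Q_j$ onto the orthogonal complement of its numerical kernel, identifying the kernel by the near-extremal flags. Only after that projection would I round, so that the exact residual stays within the slack between the numerical optimum and $4.093$. If it does not, I would tighten the solver tolerance or choose a slightly larger rational $\lambda$ margin, which is still consistent with $4.093$. The exact arithmetic check itself is routine and would be mechanised, mirroring the general-case certificate in Section~\ref{sec:cert-general}.
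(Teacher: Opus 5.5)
Your proposal is correct and follows essentially the same route as the paper. The paper proves the lemma by pointing to the bipartite size-$5$ certificate on the $3{,}808$-flag basis, which has the ingredients you describe: $52$ Cauchy--Schwarz blocks over types of size $1$ and $3$ certified positive semidefinite by integer $LDL^\top$ identities; a nonnegative linear residual built from extension differences, the black/red count equalities and the lifted minimum-degree constraints at $\eta = 0.3746$; and an exact per-flag arithmetic check.

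The only difference is in how rounding error is handled. The paper adds a Tikhonov shift $\lambda_b I$ before factorising and bounds an aggregated weighted residual by a slack budget of $10^{23}$. You instead project off the numerical kernel and require a coefficientwise nonnegative residual, which is the more standard way to conclude cone membership directly.
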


Again the proof reduces to certificate verification, in
Section~\ref{sec:cert-bipartite}. The bipartite-specific inputs are the
black-and-red vertex-count relations $\phi(\ext_{B,1}^\sigma) =
\phi(\ext_{R,1}^\sigma) = 1$ (encoding ``exactly $\Delta$ black'' and
``exactly $\Delta$ red'' vertices in $\Gcl_{\mathrm{bip}}$) and a
minimum degree constraint $\phi(D'(\sigma_i')) \geq 2(2 - \eta)$ for each
edge type $\sigma_i'$ at $\eta := 0.3746$, again the asymptotic form
of the minimum degree property defining $F$ (black edges lie in $F$,
whose induced subgraph of $L(G)^2$ has minimum degree at least
$(2 - \eta)\Delta(G)^2$).

\begin{proof}[Proof of Theorem~\ref{thm:sec-bipartite}]
By Lemma~\ref{lem:wlog-regular} for bipartite graphs it suffices to
bound $\chi'_s(G)$ on regular bipartite $G$ with $\Delta$
sufficiently large. Fix $\eta := 0.3746$ and a maximal
$F \subseteq V(L(G)^2)$ on which $L(G)^2[F]$ has minimum degree at
least $(2 - \eta)\Delta(G)^2$; for an edge $f \in F$ let $G''$ be the
$(4,2)$-coloured reduction. By Lemma~\ref{lem:sec-bip-density},
$\phi(O) \leq 4.093$, so
\[
\frac{|E_O(G'')|}{\Delta(G)^4} \leq \frac{4.093}{4} + o(1)
= 1.02325 + o(1),
\]
whence $H := L(G)^2[F]$ satisfies the hypotheses of
Corollary~\ref{cor:hjk-scale} at the degree scale
$D := 2\Delta(G)^2$ with $\varsigma = 1 - 4.093/8 = 0.488375$ (up to
the same $\varsigma' < \varsigma$ absorption as in
Section~\ref{sec:general}):
$\varepsilon(\varsigma) = \varsigma/2 - \varsigma^{3/2}/6 \approx
0.187304$, so
\[
\chi(L(G)^2[F]) \leq (1 - 0.187304 + \iota)\,2\,\Delta(G)^2
< (1.62540 + 2\iota)\Delta(G)^2.
\]
The criticality lift~\eqref{eq:criticality-lift} at
$2 - \eta = 1.6254$ gives
\[
\chi(L(G)^2) \leq \max\bigl((1.62540 + 2\iota)\Delta(G)^2,\,
\lceil 1.6254\,\Delta(G)^2 \rceil + 1\bigr).
\]
Choosing $\iota$ small
enough absorbs the $0.0001$ gap to $1.6255$ in the first branch,
and the second is below $1.6255\,\Delta(G)^2$ for $\Delta(G)$ large,
yielding $\chi'_s(G) \leq 1.6255\,\Delta(G)^2$ for $\Delta(G)$
sufficiently large.
\end{proof}

\section{Asymmetric strong edge-colouring for rational ratios}
\label{sec:asym-sec}

We prove Theorem~\ref{thm:asym-sec}. The setting generalises
Section~\ref{sec:bipartite} to bipartite graphs with side maximum
degrees $\Delta_A \neq \Delta_B$, parameterised by the rational
ratio $r := \Delta_B / \Delta_A \in (0, 1] \cap \mathbb{Q}$. Two
features distinguish it from the symmetric routes. First, the density
input is a certificate \emph{specific to the asymmetric class}: a
distinct size-$5$ local-flag solve on its own four-colour flag basis,
not the symmetric bipartite one. Second, the ratio is handled not by any
identity between objectives but by a two-step reduction --- regularise
each side to its maximum degree, then balance the two side degrees by a
blow-up --- after which a single ratio-independent certificate on the
resulting regular class covers every rational ratio simultaneously.

\subsection{The asymmetric class}
\label{sec:asym-class}

Fix a rational $r \in (0, 1]$, written in lowest terms as $r = p/q$
(so $\Delta_A$ must be divisible by $q$ for $\Delta_B = r\,\Delta_A$
to be an integer). The asymmetric reduction follows
Section~\ref{sec:bipartite}, except that the two bipartition
components carry different regularity: $A$-vertices have degree
$\Delta_A$ and $B$-vertices have degree $\Delta_B = r\,\Delta_A$.
The class $\Gcl_{\mathrm{asym}, r}$ of bipartite graphs
has $A$ regular of degree $\Delta_A$ and $B$ regular of degree
$r\,\Delta_A$; like the bipartite class of Section~\ref{sec:bipartite}
it carries four vertex colours, but its two sides now sit at different
degrees $\Delta_A \neq \Delta_B$.
On $\Gcl_{\mathrm{asym}, r}$ the
degree densities of the two extension types are
$\phi(\ext_{A}^\sigma) = 1$ and $\phi(\ext_{B}^\sigma) = r$ for every
limit functional $\phi \in \Phi^\sigma$ (the $A$-side is the
high-degree side, so the maximum degree is $\Delta_A$): the count of
unlabelled extensions through a vertex of degree $d$ is $d - O(1)$,
divided by $\binom{\Delta_A}{1} = \Delta_A$.

\subsection{Reduction to the biregular case}
\label{sec:asym-reduction}

Theorem~\ref{thm:asym-sec} is stated for side \emph{maximum} degrees
$\Delta_A, \Delta_B$, whereas the certificate operates on the
biregular class $\Gcl_{\mathrm{asym}, r}$. Two successive completions
bridge the gap, each preserving bipartiteness, preserving
$\Delta_A$, and not decreasing $\chi'_s$ (so a bound on the completed
graph transfers back to $G$).

\begin{lemma}[reduction to biregular graphs]
\label{lem:asym-wlog-biregular}
Let $G$ be bipartite with parts $A, B$ of maximum degrees
$\Delta_A \geq \Delta_B = r\,\Delta_A$, and $\Delta_A \geq 1/r$.
There is a bipartite graph $H \supseteq G$, with $G$ induced on
$V(G)$, that is exactly $(\Delta_A, \Delta_B)$-biregular
($A$-vertices of degree exactly $\Delta_A$, $B$-vertices of degree
exactly $\Delta_B$), with $\Delta(H) = \Delta_A$ and
$\chi'_s(G) \leq \chi'_s(H)$.
\end{lemma}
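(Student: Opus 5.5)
I will build $H$ by adding new vertices and edges, never adding an edge between two vertices of $V(G)$. This keeps $G$ induced on $V(G)$. Then $L(G)^2$ is the subgraph of $L(H)^2$ induced on $E(G)$: two edges of $G$ joined by a third edge of $H$ are joined by an edge of $H[V(G)]=G$. Hence $\chi'_s(G)\le\chi'_s(H)$ is automatic. Bipartiteness is preserved by adding new vertices to one side only and edges only across sides. It then remains to fill every deficient vertex up to its target degree, $\Delta_A$ on the $A$ side and $\Delta_B$ on the $B$ side, without exceeding it.

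I will not use the doubling trick of Lemma~\ref{lem:wlog-regular}, because the edge $vv'$ joins a vertex to its copy. With two different target degrees, swapping sides in the copy breaks the side-degree pattern. Instead I will complete directly with a gadget. Let $d_A:=\sum_{a\in A}(\Delta_A-\deg a)$ and $d_B:=\sum_{b\in B}(\Delta_B-\deg b)$ be the total deficiencies. First, add a fresh biregular bipartite graph $K$ with parts $A_K,B_K$, where $|A_K|=\Delta_B m$ and $|B_K|=\Delta_A m$ for suitably large $m$; for instance $K$ can be a disjoint union of copies of $K_{\Delta_B,\Delta_A}$. Then remove a matching-like set of edges from $K$ to create free stubs. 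Finally join the deficient vertices of $A$ to the stubs on $B_K$, and the deficient vertices of $B$ to the stubs on $A_K$.

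Concretely, I will choose integers $s,t\ge 0$ and new vertex sets $A'$, $B'$, and look for a bipartite graph between $A\cup A'$ and $B\cup B'$ that extends $G$ and has exactly the prescribed degrees. The degree counts must balance: $\Delta_A(|A|+|A'|)=\Delta_B(|B|+|B'|)$. Since $r=p/q$ and $q\mid\Delta_A$, I can write $\Delta_A=q k$ and $\Delta_B=p k$. The balance then reads $q(|A|+|A'|)=p(|B|+|B'|)$, which is solvable in nonnegative integers with $|A'|,|B'|$ as large as I like. The hypothesis $\Delta_A\ge 1/r$ guarantees $\Delta_B\ge 1$, so $B$-side degrees are positive and the construction is non-degenerate.

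To realise the remaining bipartite graph on the new edges, I will use the Gale--Ryser theorem for bipartite degree sequences, or more simply an explicit construction. Give each $A$-vertex $a$ a residual demand of $\Delta_A-\deg_G a$ edges, all going to $B'$, and each new vertex in $A'$ a demand of $\Delta_A$. Give each $B$-vertex a residual demand going to $A'$, and each $B'$-vertex a demand of $\Delta_B$. No new edge is allowed inside $A\times B$. The resulting bipartite graph on $(A\cup A')\times(B\cup B')$ minus the edges of $G$ has two kinds of demands. Old vertices have demands at most $\Delta_A$ and are connected only to new vertices. New vertices have full demands and are connected to everything on the opposite side. If $|A'|$ and $|B'|$ are huge compared with $|A|,|B|$ and $\Delta_A$, a greedy or round-robin assignment will work.

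I expect the main obstacle to be making this realisability step rigorous and simple, specifically ensuring that the new edges do not create multi-edges. My plan is to choose $|A'|\ge|B|\Delta_B$ and $|B'|\ge|A|\Delta_A$. I then first match the residual stubs of old vertices to distinct new vertices, one stub per new vertex, so no multi-edges arise. This leaves a bipartite graph on $A'\cup B'$ in which each new vertex has degree $0$ or $1$ so far. I then complete it to the target degrees. I will do this by taking $|A'|,|B'|$ to be multiples of $\Delta_B$ and $\Delta_A$ respectively, satisfying the balance equation, and then filling in with a standard regular bipartite completion. This is possible by König's theorem: a bipartite multigraph with residual degrees bounded by the targets embeds in a biregular graph, and the large number of vertices lets me avoid repeated edges by swapping endpoints. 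Finally I check that $\Delta(H)=\Delta_A$, which holds because $\Delta_B\le\Delta_A$.
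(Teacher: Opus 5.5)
Your route (send every deficient stub of $G$ to a distinct fresh vertex, then realise the remaining degrees on fresh sets $A'\cup B'$ whose sizes satisfy the balance equation) is viable and differs from the paper's. The gap is in the step you yourself flag as the main obstacle: it is not closed, and the concrete plan you give for it fails. Requiring $|A'|=\Delta_B s$ and $|B'|=\Delta_A u$ turns the balance equation into $\Delta_A|A|-\Delta_B|B|=\Delta_A\Delta_B(u-s)$. But $\Delta_A|A|-\Delta_B|B|=d_A-d_B$ need not be divisible by $\Delta_A\Delta_B$. For instance, take $G$ with edges $a_1b_1,a_1b_2,a_2b_1,a_3b_2$: then $\Delta_A=\Delta_B=2$, and balance forces $|B'|=|A'|+1$, so $|A'|$ and $|B'|$ cannot both be even. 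Your first gadget fails for the same reason as described, since deleting an edge of $K$ frees one stub on each side and so requires $d_A=d_B$. Moreover, the appeal to K\"onig's theorem, that a graph with degrees bounded by $(\Delta_A,\Delta_B)$ sits inside a \emph{simple} $(\Delta_A,\Delta_B)$-biregular graph, is essentially the lemma you are proving. The ``swapping endpoints'' step is also never specified.

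The repair is short, and it is the Gale--Ryser option you mention but never carry out. Drop the divisibility requirement. After the injective stub assignment there are no edges in $A'\times B'$ at all, so multi-edges are not an issue. You need a simple bipartite graph on $A'\cup B'$ with degrees $\Delta_A-\epsilon_x$ for $x\in A'$ and $\Delta_B-\epsilon_y$ for $y\in B'$, where $\epsilon\in\{0,1\}$ records whether the vertex already received a stub. The two degree sums agree exactly by the balance equation, since $d_A-d_B=\Delta_A|A|-\Delta_B|B|$. List the $A'$-degrees as $a_1\ge a_2\ge\cdots$. The Gale--Ryser inequality $\sum_{i\le k}a_i\le\sum_{y\in B'}\min(\Delta_B-\epsilon_y,k)$ holds once $|B'|\ge\Delta_A$: for $k<\Delta_B$ the right-hand side is $k|B'|\ge k\Delta_A$, and for $k\ge\Delta_B$ it is the full degree sum.

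For comparison, the paper needs no counting and no realisation theorem. Pendant $B$-leaves bring every $A$-vertex to degree $\Delta_A$, using only $\Delta_B\ge 1$. Then it takes $\Delta_A$ disjoint copies and, for each deficient $b$, adds $d(b)$ fresh $A$-vertices joined to all $\Delta_A$ copies of $b$. This lifts every $B$-vertex to $\Delta_B$, while each new vertex has degree exactly $\Delta_A$. That construction is fully explicit and never touches a balance equation. Your approach, once repaired, trades this explicitness for a divisibility and counting argument plus a degree-sequence realisation theorem.
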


\begin{proof}
We build $H$ in two phases. Each phase only adds fresh vertices and
edges and keeps the graph bipartite, so it suffices to track the two
side degrees and to verify at the end that $G$ survives as an induced
subgraph.

\emph{High side.} For each $A$-vertex $a$ with $\deg_G(a) < \Delta_A$,
add $\Delta_A - \deg_G(a)$ fresh $B$-vertices, each adjacent only to
$a$. Call the result $G_1$. Every $A$-vertex now has degree exactly
$\Delta_A$, and every $B$-vertex of $G_1$ has degree at most
$\Delta_B$: the original $B$-vertices are unchanged, and each fresh one
has degree $1 \leq \Delta_B$, using $\Delta_B = r\,\Delta_A \geq 1$ from
$\Delta_A \geq 1/r$. Write $d(b) := \Delta_B - \deg_{G_1}(b) \geq 0$ for
the residual $B$-side deficiency.

\emph{Low side.} Take $\Delta_A$ disjoint copies
$G_1^{(1)}, \dots, G_1^{(\Delta_A)}$ of $G_1$, writing
$b^{(1)}, \dots, b^{(\Delta_A)}$ for the copies of a $B$-vertex $b$. For
each $b$ with $d(b) > 0$, add $d(b)$ fresh $A$-vertices and join each of
them to all $\Delta_A$ copies $b^{(1)}, \dots, b^{(\Delta_A)}$ --- that
is, a complete bipartite graph $K_{d(b),\,\Delta_A}$ between the new
$A$-vertices and the copies of $b$. Call the result $H$. Each new
$A$-vertex is adjacent to exactly the $\Delta_A$ copies of a single $b$,
so has degree exactly $\Delta_A$; each copy $b^{(i)}$ gains exactly
$d(b)$ neighbours and so reaches degree
$\deg_{G_1}(b) + d(b) = \Delta_B$; and the copied $A$-vertices keep
degree $\Delta_A$. As every vertex of $H$ is either a copied vertex or a
new $A$-vertex, $H$ is bipartite and exactly
$(\Delta_A, \Delta_B)$-biregular, with $\Delta(H) = \Delta_A$ because
$\Delta_B \leq \Delta_A$.

Finally, $G$ is an induced subgraph of $G_1$, hence of the first copy
$G_1^{(1)} \subseteq H$; and the low-side phase adds edges only at the
new $A$-vertices, none of them inside $V(G)$, so $H[V(G)] = G$. Any two
edges of $G$ that share a vertex or are joined by an edge of $G$ do so
also in $H$, so restricting a strong edge-colouring of $H$ to $E(G)$
gives a strong edge-colouring of $G$; hence $\chi'_s(G) \leq \chi'_s(H)$.
\end{proof}

\subsection{The asymmetric objective and its certificate}
\label{sec:asym-objective}

On $\Gcl_{\mathrm{asym}, r}$ we take the three-summand strong-edge
objective
\[
O := \bigl\llbracket D(\bredge)\cdot(\ext_1^{\bredge})^{n-4}
\bigr\rrbracket
+ \bigl\llbracket D(\blredge)\cdot(\ext_1^{\blredge})^{n-4}
\bigr\rrbracket
+ \bigl\llbracket D(\bgedge)\cdot(\ext_1^{\bgedge})^{n-4}
\bigr\rrbracket
\]
and run the local-flag SDP at $n = 5$. Unlike the bipartite route,
this is a genuinely distinct certificate: its size-$5$ unlabelled
four-colour basis has $334$ flags, and the solve carries $22$
Cauchy--Schwarz blocks, against the $3{,}808$ flags and $52$ blocks of
the symmetric bipartite certificate. The raw
objective $O$ is not itself ratio-invariant --- its optimum scales as
$r^2$ --- so rather than normalise the objective we normalise the
graph: the blow-up below regularises the class, after which one
$r$-independent certificate applies.

\begin{lemma}[degree-balancing blow-up]
\label{lem:asym-blowup}
Let $H$ be exactly $(\Delta_A, \Delta_B)$-biregular. Blowing up each
$A$-vertex to $\Delta_A$ copies and each $B$-vertex to $\Delta_B$
copies, with a complete join in place of each edge, yields a regular
graph $H'$ of common degree $\Delta_A\Delta_B$ in the four-colour
class, with $L(H')^2$ of the same strong-neighbourhood density as
$L(H)^2$ in the limit.
\end{lemma}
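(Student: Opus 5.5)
We would prove the three assertions of Lemma~\ref{lem:asym-blowup} in turn: regularity, membership in the four-colour class, and preservation of strong-neighbourhood density. The idea is to compare $L(H')^2$ with a lexicographic blow-up of $L(H)^2$.

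Write $\pi\colon V(H')\to V(H)$ for the projection sending each copy to its original vertex. By construction $xy\in E(H')$ if and only if $\pi(x)\pi(y)\in E(H)$.

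\emph{Regularity.} A copy of an $A$-vertex $a$ is adjacent to all $\Delta_B$ copies of each of its $\deg_H(a)=\Delta_A$ neighbours. Its degree is therefore $\Delta_A\Delta_B$. Symmetrically, a copy of a $B$-vertex $b$ has degree $\deg_H(b)\cdot\Delta_A=\Delta_B\Delta_A$. Since copies of $A$-vertices are joined only to copies of $B$-vertices, $H'$ is bipartite with parts $\pi^{-1}(A)$ and $\pi^{-1}(B)$, and it is regular of common degree $D:=\Delta_A\Delta_B$. For the four-colour class we colour each copy by the colour of its original and choose the distinguished edge $f'$ as any copy of $f$. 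Then the $f$-side neighbourhoods of $f'$ are exactly $\pi^{-1}(N_H(u))$ and $\pi^{-1}(N_H(v))$, each of size $D$. This matches the ``exactly $\Delta$ black, exactly $\Delta$ red'' normalisation of $\Gcl_{\mathrm{bip}}$ at degree $D$. Edge colours (membership in $F$) are also inherited from $\pi(e)$.

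\emph{Strong adjacency lifts exactly.} Every edge $e'=xy$ of $H'$ projects to an edge $\pi(e')=\pi(x)\pi(y)$ of $H$, and each edge of $H$ has exactly $D$ preimages. We claim that two distinct edges $e',g'$ of $H'$ are strongly adjacent if and only if either $\pi(e')=\pi(g')$, or $\pi(e')$ and $\pi(g')$ are strongly adjacent in $H$. For the first case, $(a_i,b_j)$ and $(a_k,b_l)$ are joined by the edge $a_ib_l$. For the second case, a witness vertex or edge in $H$ lifts to a witness in $H'$ because of the complete joins. The converse holds because $\pi$ is a homomorphism, so any path of length at most $2$ in $L(H')$ projects to a walk of length at most $2$ in $L(H)$. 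Hence $L(H')^2$ is the lexicographic product $L(H)^2[K_D]$: each vertex of $L(H)^2$ is replaced by a clique of size $D$, and adjacent vertices are replaced by complete joins.

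\emph{Density comparison.} In this lexicographic product the strong neighbourhood of $e'$ has size $D\,|N_s(\pi(e'))| + (D-1)$. The number of edges it spans is
\[
D^2\,\bigl|E\bigl(L(H)^2[N_s(\pi(e'))]\bigr)\bigr| + O\bigl(D^2\,|N_s(\pi(e'))|\bigr) + O(D^2).
\]
The error terms come from pairs lying in a common clique, and from pairs involving the fibre of $\pi(e')$ itself. We would apply the same count inside $F$, using the inherited edge colouring. Normalising edge counts by the square of the neighbourhood scale, a factor $D^2$ on top of the $H$-scale, gives the same ratio as in $H$ up to a relative error $O(1/|N_s|)=o(1)$ once $\Delta_B\to\infty$. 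Since $\Delta_B=r\Delta_A$ with $r$ fixed, $\Delta_A\to\infty$ suffices, and so the two densities agree in the limit.

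The main obstacle we anticipate is this last step. We must make ``the same strong-neighbourhood density in the limit'' precise: identify the right scale, namely $2\Delta_A\Delta_B$ for $H$ versus the corresponding scale for $H'$, and check that the intra-fibre cliques contribute only lower order. We would handle it by bounding the clique contributions explicitly as above. If a sharper normalisation is needed, we would instead phrase the conclusion as equality of the limit functionals obtained from the coloured reductions of $(H,f)$ and $(H',f')$.
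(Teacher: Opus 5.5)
Your proof is correct and follows the paper's route. The paper gives the same degree count and then simply asserts that strong-neighbourhood density is blow-up invariant; you justify this by identifying $L(H')^2$ with the lexicographic product $L(H)^2[K_D]$ and counting edges, and the normalised error is $O(1/(\Delta_A\Delta_B))$ in absolute terms. Drop the fallback of equating the limit functionals of the reductions of $(H,f)$ and $(H',f')$. These genuinely differ: for instance, a $B$-vertex extension has density $r$ in $H$ but $1$ in $H'$. Only the objective density is preserved, and that is all your main argument uses.
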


\begin{proof}
An $A$-copy meets the $\Delta_B$ copies of each of its $\Delta_A$
neighbours, and a $B$-copy meets the $\Delta_A$ copies of each of its
$\Delta_B$ neighbours, so every vertex has degree $\Delta_A\Delta_B$;
strong-neighbourhood density is blow-up invariant in the limit.
\end{proof}

The blow-up is the device that lets one certificate serve all ratios:
it maps every biregular $H$, whatever its $r$, to a \emph{regular}
graph $H'$ of the four-colour class, on which the $334$-flag
certificate provides a single ratio-independent density bound
(Section~\ref{sec:cert-asym} records the certificate and the
combinatorial bridge concretely). This regular four-colour class is the
blow-up image of $\Gcl_{\mathrm{asym}, r}$, and is where the certificate
below is stated; it is distinct from $\Gcl_{\mathrm{asym}, r}$ itself,
which is only biregular.

\begin{lemma}[asymmetric strong edge-colouring density bound]
\label{lem:asym-sec-density}
On the regular four-colour class, $4.5496\,\emptyset - O \in
\SemCone^\emptyset$ in $\Lcl^\emptyset_5$; equivalently
$\phi(O) \leq 4.5496 = 8 \cdot 0.5687$ for every limit functional
$\phi$ on that class.
\end{lemma}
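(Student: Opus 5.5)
This is a certificate-verification statement, so I would follow the same template used for Lemmas~\ref{lem:sec-general-density} and~\ref{lem:sec-bip-density}. The aim is an explicit decomposition of the form~\eqref{eq:sdp-decomp},
\[
4.5496\,\emptyset - O = \sum_i \alpha_i\, g_i + \sum_{j=1}^{22} \bigl\llbracket h_j^2 \bigr\rrbracket,
\]
holding in $\Lcl^\emptyset_5$ over the $334$-flag four-colour basis of the regular class. Here the $g_i$ range over the admissible cone elements of that class:
\begin{itemize}
\item the extension differences $\llbracket \ext_p^\tau - \ext_q^\tau \rrbracket$, which are valid because the blow-up class is regular by Lemma~\ref{lem:asym-blowup};
\item the vertex-count relations for black and red vertices;
\item the minimum-degree constraints $\phi(D'(\sigma_i')) \geq 2(2-\eta)$ on the three edge types.
\end{itemize}
Each of these is in $\SemCone^\emptyset$ by the facts recalled in Section~\ref{sec:framework}. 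Each $\llbracket h_j^2\rrbracket$ is in the cone because averaging preserves positivity. Hence the right-hand side is in $\SemCone^\emptyset$, and applying any $\phi\in\Phi^\emptyset$ gives $\phi(O)\le 4.5496$.

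The steps, in order, are as follows.
\begin{enumerate}
\item Enumerate the $334$ size-$5$ unlabelled flags of the regular four-colour class. For each flag, check membership in the class and locality, using the rule that every component contains a labelled, black or red vertex.
\item Expand $O$ in this basis. Lift each $D(\sigma_i')$ by one factor of $\ext_1^{\sigma_i'}$ and average.
\item For each of the $22$ local types $\sigma_j$, fix the size-$m_j$ flag basis with $2m_j-|\sigma_j|\le 5$. Compute the products of basis flags and their averages in $\Lcl^\emptyset_5$. This turns the quadratic form $\llbracket h_j^2\rrbracket=\llbracket x^\top Q_j x\rrbracket$ into a linear combination of the $334$ flags.
\item Take the floating-point SDP solution and round each $Q_j$ to a rational matrix. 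Exhibit an integer $LDL^\top$ factorisation with nonnegative $D$ to certify each block is PSD.
\item Round the multipliers $\alpha_i$ to nonnegative rationals.
\item Verify coefficientwise that $4.5496\,\emptyset - O - \sum_i\alpha_i g_i - \sum_j\llbracket x^\top Q_j x\rrbracket$ has all coefficients nonnegative. Since every flag density is nonnegative, any nonnegative residual can be absorbed as slack. This step is a finite rational arithmetic check, which is the check formalised in Lean.
\end{enumerate}

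I expect the main obstacle to be the rounding. The bound $4.5496=8\cdot 0.5687$ presumably sits very close to the numerical SDP optimum, so naive rounding of the $Q_j$ can create a negative residual on some flags or break positive semidefiniteness.

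My first remedy would be to project the rounded solution back onto the affine equality constraints: solve exactly for a rational correction of the $\alpha_i$ along the regularity relations, which span many directions. I would also shrink each $Q_j$ slightly toward a strictly positive definite matrix before taking its $LDL^\top$ factorisation. If the slack budget is still too tight, I would zero out near-zero eigenvalues using the kernel of the dual solution before rounding.

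A secondary check is to confirm that the minimum-degree constraint at the relevant $\eta$ transfers correctly to the blown-up class. Strong-neighbourhood densities are preserved by Lemma~\ref{lem:asym-blowup}, so the constraint $D'(\sigma)\ge 2(2-\eta)$ is a valid constraint in the limit.
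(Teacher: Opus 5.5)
\textbf{Gap: the minimum degree rows are not valid on this class.} Your template matches the paper's: the $334$-flag basis, $22$ Cauchy--Schwarz blocks certified by integer $LDL^\top$ identities, a nonnegative linear residual, and a rational rounding check. The problem is one family of cone generators, the minimum degree constraints $\phi(D'(\sigma_i')) \geq 2(2-\eta)$.

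In the general and bipartite certificates these rows are legitimate only because the objective is evaluated at edges of the high-minimum-degree set $F$, with a criticality lift afterwards. The asymmetric route deliberately takes no such $F$. The lemma must bound the strong-neighbourhood density at \emph{every} edge. That is what lets the proof of Theorem~\ref{thm:asym-sec} apply Corollary~\ref{cor:hjk-scale} to all of $L(H)^2$ with no lift. It is also why $4.5496$ is looser than the bipartite $4.093$ (Remark~\ref{rem:asym-looser}).

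On the regular four-colour class the constraint is false. Take $H = K_{\Delta_B,\Delta_A}$, the complete $(\Delta_A,\Delta_B)$-biregular graph. Its blow-up is $K_{\Delta,\Delta}$ with $\Delta = \Delta_A\Delta_B$. In the reduction at any edge, every edge is black with one black and one red endpoint. Every edge has strong degree $\Delta^2 - 1$, so $\phi(D') = 2$. This is below $2(2-\eta)$ for every $\eta < 1$, in particular at $\eta = 0.3746$. So these rows are not in $\SemCone^\emptyset$. A decomposition using them would only bound $\phi(O)$ over the strictly smaller set of limit functionals satisfying the extra inequality, not the lemma as stated.

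\textbf{Your secondary check does not repair this.} Lemma~\ref{lem:asym-blowup} transfers to $H'$ only properties that $H$ already has. No minimum strong-degree bound holds at all edges of a biregular $H$. The strong degree of $uv$ is $2\Delta_A\Delta_B - e(N(u),N(v)) - 1$, which can be as small as $\Delta_A\Delta_B - 1$.

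\textbf{Fix.} Drop those rows, as the paper's asymmetric certificate does. Certify using only relations valid at every edge of every graph in the class: the extension differences from regularity of the blow-up, and the exact black and red vertex counts. Add to these the $22$ sum-of-squares blocks. With that change the rest of your plan goes through. Absorbing a coefficientwise nonnegative residual via nonnegativity of flag densities is a legitimate, and arguably cleaner, alternative to the paper's aggregated slack budget.
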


\begin{proof}
The bound is the output of the size-$5$ SDP solve of
Section~\ref{sec:cert-asym} on the four-colour class: its $22$
Cauchy--Schwarz blocks and integer LDL identities witness the cone
membership. Because it bounds the all-vertex density with no
minimum degree restriction, this optimum $\phi(O) \leq 4.5496$ is
genuinely looser than the bipartite $\phi(O) \leq 4.093$
(Remark~\ref{rem:asym-looser}).
\end{proof}

\begin{proof}[Proof of Theorem~\ref{thm:asym-sec}]
Fix a rational $r \in (0, 1]$ and let $G$ be bipartite with side
maximum degrees $\Delta_A, \Delta_B = r\,\Delta_A$ (so $\Delta_A$ is
a multiple of the denominator of $r$ in lowest terms). By
Lemma~\ref{lem:asym-wlog-biregular} it suffices to bound $\chi'_s(H)$
for the exactly-$(\Delta_A, \Delta_B)$-biregular completion $H$, since
$\chi'_s(G) \leq \chi'_s(H)$ and $\Delta(H) = \Delta_A$.

By Lemma~\ref{lem:cbi-asym}, along any $\Delta_A$-increasing sequence,
$\phi(O) = |E_O(H'')| /
\bigl(\binom{\Delta_A}{2}\binom{\Delta_B}{2}\bigr) + o(1)$, where
$H''$ is the four-colour reduction of $H$. By
Lemma~\ref{lem:asym-blowup} the passage to the regular blow-up leaves
this density unchanged, so Lemma~\ref{lem:asym-sec-density} gives
$\phi(O) \leq 4.5496$. Set $D := 2\Delta_A\Delta_B$. Every strong
neighbour of an edge $f = \{u, v\}$ is an edge incident to
$N(u) \cup N(v)$, and there are at most
$\Delta_A\Delta_B + \Delta_B\Delta_A = D$ of these, so
$\Delta(L(H)^2) \leq D$; moreover $\binom{D}{2} =
8\binom{\Delta_A}{2}\binom{\Delta_B}{2}(1 + o(1))$, so the
strong-neighbourhood density at scale $D$ satisfies
\[
1 - \varsigma
= \frac{|E_O(H'')|}{\binom{D}{2}}
= \frac{\phi(O)}{8} + o(1)
\leq 0.5687 + o(1).
\]
Unlike the general and bipartite density bounds of
Sections~\ref{sec:general} and~\ref{sec:bipartite}, which hold only on
a high minimum degree subset $F$ and reach the full line graph square
through the criticality lift, Lemma~\ref{lem:asym-sec-density} bounds
the strong-neighbourhood density of \emph{every} edge of $L(H)^2$: the
bipartite structure holds this all-vertex density below the general
Bruhn--Joos threshold, so the argument needs no restriction to $F$.
Hence $L(H)^2$ itself --- with no criticality lift --- meets the
hypotheses of Corollary~\ref{cor:hjk-scale} at scale $D$ with
$\varsigma = 0.4313$
(up to the usual $\varsigma' < \varsigma$ absorption), and
$\varepsilon(\varsigma) \approx 0.16844$, so
Corollary~\ref{cor:hjk-scale} applied to $L(H)^2$ gives
$\chi'_s(H) = \chi(L(H)^2) \leq (1 - 0.16844 + \iota)\cdot
2\Delta_A\Delta_B < (1.6632 + 2\iota)\,\Delta_A \Delta_B$.

The constant $1.6632$ and the slack $\iota$ are independent of $r$:
the density bound $\phi(O) \leq 4.5496$ is that of the single
regular-class certificate, and the sparse colouring lemma's
conclusion uses only the density input above. So a single small choice
of $\iota > 0$ gives $K \leq 1.6633$ uniformly in $r$, and with
$\chi'_s(G) \leq \chi'_s(H)$,
\[
\chi'_s(G) \leq K\,\Delta_A \Delta_B,
\qquad
K := 1.6632 + 2\iota \leq 1.6633. \qedhere
\]
\end{proof}

\begin{remark}
\label{rem:asym-looser}
At $r = 1$ the constant $K \leq 1.6633$ of
Theorem~\ref{thm:asym-sec} is weaker than the bipartite $1.6255$ of
Theorem~\ref{thm:sec-bipartite}, and the loss is intrinsic to the
method. The asymmetric certificate uses the \emph{same} three-summand
objective as the bipartite one (Section~\ref{sec:asym-objective}), but
bounds the all-vertex strong-neighbourhood density with no restriction
to a high minimum degree set $F$: it therefore forgoes the minimum degree
tightening (at $\eta = 0.3746$) that sharpens the bipartite route, and
must stay ratio-independent over the whole biregular class. The larger
per-pair density that results ($\phi(O)/8 = 0.5687$, versus the
bipartite $0.5116$) then feeds the same sparse colouring lemma
(Theorem~\ref{thm:hjk-colouring}).
\end{remark}

\section{The strong edge-colouring certificates}
\label{sec:certificates}

We describe the three SDP certificates --- underlying
Theorems~\ref{thm:sec-general}, \ref{thm:sec-bipartite},
and~\ref{thm:asym-sec} --- as mathematical objects, and prove the
bridging density lemmas left open above. The certificate data itself
--- the flag bases, block matrices, and rationalised witnesses ---
together with the generator that produces it and the Lean formalisation
that verifies it, is available at~\cite{daveyLocalFlags2024Repo}.

\subsection{The general certificate}
\label{sec:cert-general}

We prove Lemma~\ref{lem:sec-general-density}.

\subsubsection{Certificate structure}

The certificate $\mathfrak{C}_{\mathrm{gen}}$ comprises:
\begin{itemize}
\item the basis $(F_1, \dots, F_{17{,}950})$ of unlabelled
local $\emptyset$-flags of size $5$ in $\Gcl$;
\item the objective vector $O = \sum_j O^{(j)} F_j$ with
$O^{(j)} = -\mathrm{target}_j / \linscale$ as a coefficient,
where $\mathrm{target}_j \in \mathbb{Z}$ is the integer entry and
$\linscale := 10^{12}$ is a fixed rationalisation scale;
\item the $39$ block local types $\sigma_b$ and inner sizes
$m_b$ with $2 m_b - |\sigma_b| = 5$ (which forces $|\sigma_b|$ odd:
one block with $(|\sigma_b|, m_b) = (1, 3)$ and $38$ with $(3, 4)$);
for each block, an inner
basis $(\mathrm{basis}^{(b)}_p)_{p \leq d_b}$ of size $d_b$ ranging
from $1$ up to $55$, an integer matrix
$Y_b \in \mathbb{Z}^{d_b \times d_b}$, an integer lower-triangular
factor $L_b$, a pivot vector $D_b \in \mathbb{Z}_{>0}^{d_b}$, a
per-block Tikhonov shift
$\lambda_b \in \{1/10^{11}, 1/10^{12}\}$ and a per-block integer
denominator $s_b > 0$;
\item the linear residual data
$(\alpha_{e}, \beta_\sigma)$ (coefficients of the
extension-difference and black-vertex-count constraints lifted to
size $5$), and the minimum degree constraint coefficient
$\delta$ scaling the relation
$\phi(\llbracket(D'(\sigma_1) - 2(2-\eta)(\ext_1^{\sigma_1})^2)
\cdot (\ext_1^{\sigma_1})\rrbracket) \geq 0$;
\item slack budget $\slackBudget := 2 \times 10^{23}$.
\end{itemize}

The bound to certify is
\[
\tfrac{10644}{1000}\,\emptyset - O
\in \SemCone^\emptyset.
\]
The decomposition is
\begin{equation}
\label{eq:gen-decomp}
\tfrac{10644}{1000}\,\emptyset - O
= \mathrm{linSum}_{\mathrm{gen}}
+ \sum_{b=0}^{38} \mathrm{csBlock}_b,
\end{equation}
where each $\mathrm{csBlock}_b = \llbracket\sum_{k=0}^{d_b - 1}
(D_b[k]/s_b)\,\mathrm{col}_k^{(b)} \cdot \mathrm{col}_k^{(b)}
\rrbracket$ with $\mathrm{col}_k^{(b)} := \sum_p
L_b[p, k]\,\mathrm{basis}^{(b)}_p$, and
$\mathrm{linSum}_{\mathrm{gen}}$ is the linear-residual combination
of the extension-difference, black-vertex-count, and
minimum degree relations.

The proof of~\eqref{eq:gen-decomp} has three parts.
\begin{enumerate}[label=\textup{(C\arabic*)}]
\item \label{c1-gen} Per-block positivity: each
$\mathrm{csBlock}_b \in \SemCone^\emptyset$.
\item \label{c2-gen} Linear-residual positivity:
$\mathrm{linSum}_{\mathrm{gen}} \in \SemCone^\emptyset$.
\item \label{c3-gen} The arithmetic identity~\eqref{eq:gen-decomp}
holds in $\Lcl^\emptyset_5$.
\end{enumerate}

\subsubsection{Per-block positivity}

For each block $b$ the certificate exhibits the integer LDL identity
\begin{equation}
\label{eq:gen-ldl}
L_b \cdot \operatorname{diag}(D_b) \cdot L_b^\top
= s_b \cdot (Y_b + \lambda_b\,I_{d_b}),
\end{equation}
a finite integer matrix equality, which one verifies entry-wise:
$\sum_{k=0}^{q} L_b[p, k]\,D_b[k]\,L_b[q, k]
= s_b\,(Y_b[p, q] + \lambda_b\,[p = q])$
for $p \geq q$ in $\{0, \dots, d_b - 1\}$.
Since $D_b > 0$ entry-wise, the matrix
$L_b\,\operatorname{diag}(D_b)\,L_b^\top$ is positive semidefinite (a
non-negative combination of rank-one terms), so by~\eqref{eq:gen-ldl}
$s_b(Y_b + \lambda_b I)$ is PSD at integer scale, hence so is
$Y_b + \lambda_b I$.

Each column $\mathrm{col}_k^{(b)} \in \Lcl^{\sigma_b}_{m_b}$, so by
positivity preservation (Section~\ref{sec:framework};
\cite[positivity-preservation lemma]{daveyPentagonLocalFlags2026}),
$\llbracket(\mathrm{col}_k^{(b)})^2\rrbracket \in \SemCone^\emptyset$.
The coefficient $D_b[k]/s_b > 0$, and a non-negative combination of
$\SemCone^\emptyset$-elements lies in $\SemCone^\emptyset$, so
$\mathrm{csBlock}_b \in \SemCone^\emptyset$, giving~\ref{c1-gen}.

\subsubsection{\texorpdfstring{Worked example: block $0$}{Worked example: block 0}}
To make the data concrete we describe block $0$. Its local type
$\sigma_0$ is the size-$1$ type
$\redvertex$ (one labelled red vertex, viewed as the
$\sigma_0$-flag), and the inner size is $m_0 = 3$, giving inner
basis dimension $d_0 = 53$. The basis enumerates size-$3$ local
$\sigma_0$-flags in $\Gcl$ that arise from adding two unlabelled
vertices, with colours and edges to the labelled vertex satisfying
$\Gcl$'s constraints.

The block-$0$ Tikhonov shift is $\lambda_0 = 1/10^{12}$, the
denominator $s_0$ is an integer with $\log_{10} s_0 \in [600, 700]$,
and the integer entries of $Y_0$ range over $|Y_0[p, q]| \leq 10^{22}$
across the $53 \times 53$ matrix. The LDL identity
$L_0 \operatorname{diag}(D_0) L_0^\top = s_0(Y_0 + \lambda_0 I_{53})$
holds entry-wise, with $D_0$ a strictly positive integer vector of
$53$ pivots. The resulting cone element
\[
\mathrm{csBlock}_0 = \biggl\llbracket
\sum_{k=0}^{52} \tfrac{D_0[k]}{s_0}\,\mathrm{col}_k^{(0)} \cdot
\mathrm{col}_k^{(0)} \biggr\rrbracket
\in \SemCone^\emptyset
\]
is one of the $39$ $\mathrm{csBlock}$ summands of~\eqref{eq:gen-decomp}. The remaining
$38$ blocks are structurally identical, differing only in local
type, inner dimension, and the entries of $L_b, D_b, Y_b$.

\subsubsection{Linear-residual positivity}

The linear residual is a non-negative integer combination of three
families of cone-positive elements:

\begin{enumerate}[label=\textup{(R\arabic*)}]
\item \emph{Extension-difference vectors.} For each local type
$\sigma$ entering the certificate with $|\sigma| \in \{1, 2, 3, 4\}$ and
each pair $i, j \in [|\sigma|]$, the vector
$\llbracket\ext_i^\sigma - \ext_j^\sigma\rrbracket \in
\SemCone^\emptyset$ by Section~\ref{sec:framework}; the certificate uses a
finite set $\mathcal{E}$ of such pairs.
\item \emph{Black-vertex-count lifts.} The relation
$\phi(\ext_{B,1}^\sigma) \leq 2$ for every local type
$\sigma$ encodes $|B(G)| \leq 2\Delta(G)$ and gives
$\llbracket 2\ext_i^\sigma - \ext_{B,1}^\sigma\rrbracket \in
\SemCone^\emptyset$ for $|\sigma| \leq 4$.
\item \emph{Minimum degree constraint.} For the objective's
black-red edge type $\sigma_1 = \bredge$ (a $2$-vertex type; despite the
shared subscript, distinct from block $1$'s $3$-vertex local type),
$\phi(\llbracket(D'(\sigma_1) - 2(2-\eta)(\ext_1^{\sigma_1})^2)
\cdot (\ext_1^{\sigma_1})\rrbracket) \geq 0$
by the asymptotic identity
$\rho(D'(\sigma_1); (G', e)) \geq 2(2 - \eta) - o(1)$ on
$\Bcl_1$ from~\eqref{eq:EO-bridge}.
\end{enumerate}

The certificate exhibits non-negative rational coefficients
$\alpha_e, \beta_\sigma, \delta$ such that
\begin{multline*}
\mathrm{linSum}_{\mathrm{gen}}
= \sum_{e \in \mathcal{E}} \alpha_e
\llbracket \ext_{i(e)}^{\sigma(e)} - \ext_{j(e)}^{\sigma(e)}\rrbracket
+ \sum_{|\sigma| \leq 4} \beta_\sigma
\llbracket 2\ext_1^\sigma - \ext_{B,1}^\sigma\rrbracket\\
+ \delta\,\llbracket(D'(\sigma_1) - 2(2-\eta)(\ext_1^{\sigma_1})^2)
\cdot (\ext_1^{\sigma_1})\rrbracket.
\end{multline*}
Each summand lies in $\SemCone^\emptyset$ and the combination is
non-negative, hence
$\mathrm{linSum}_{\mathrm{gen}} \in \SemCone^\emptyset$, giving~\ref{c2-gen}.

\subsubsection{The arithmetic identity}

The identity~\eqref{eq:gen-decomp} unfolds as $17{,}950$ equations,
one per basis flag $F_j$:
\begin{equation}
\label{eq:gen-per-flag}
\tfrac{10644}{1000}\,[F_j = \emptyset]
- O^{(j)}
= \ell^{(j)} + \sum_{b = 0}^{38} \mathrm{cs}^{(j)}_b,
\end{equation}
where $\ell^{(j)}$ is the size-$5$ coefficient of
$\mathrm{linSum}_{\mathrm{gen}}$ at $F_j$ (a non-negative rational
that we read off the certificate's $\alpha_e, \beta_\sigma, \delta$ together
with the chain-rule and unlabel expansion), and $\mathrm{cs}^{(j)}_b$
is the size-$5$ coefficient of $\mathrm{csBlock}_b$ at $F_j$ (which we
read off the chain-rule expansion of each
$(\mathrm{col}_k^{(b)})^2$ and the averaging $\llbracket\,
\cdot\,\rrbracket$).

Multiplying both sides of~\eqref{eq:gen-per-flag} by
$\linscale^2 = 10^{24}$ converts it to a system of $17{,}950$
integer identities; a single pass over the certificate data verifies each.

\subsubsection{The combinatorial bridge identity}

In the size-$5$ basis the objective expands as
$O = \sum_j O^{(j)} F_j$, where the certificate stores each coefficient
as an integer $\mathrm{target}_j$ with
$O^{(j)} = -\mathrm{target}_j / \linscale$. Under the solver's
``$\min\,{-}(\cdot)$'' convention every $\mathrm{target}_j \leq 0$, so
$O^{(j)} \geq 0$. The following lemma matches the resulting objective
density with the strong-neighbourhood quantity of
Section~\ref{sec:general}.

\begin{lemma}[combinatorial bridge identity, general case]
\label{lem:cbi-gen}
For every $\Delta$-increasing sequence $(G_k, F_k, e_k)$
with $G_k$ regular and $(2,2)$-coloured reduction $G'_k$,
\[
\frac{2\,|E_O(G'_k)|}{\binom{\Delta(G'_k)}{2}^2}
= \sum_{j = 1}^{17{,}950} O^{(j)}\,\rho(F_j; G'_k)
+ o(1)
\quad\text{as }\Delta(G'_k) \to \infty,
\]
where $\rho(F_j; G'_k)$ is the local density of the size-$5$ basis flag
$F_j$ (Section~\ref{sec:framework}) and
$O^{(j)} = -\mathrm{target}_j / \linscale \geq 0$.
\end{lemma}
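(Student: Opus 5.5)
I will prove the identity in two stages: first a size-$4$ identity at the level of the typed objectives $D(\sigma_i)$, and then a transfer to the size-$5$ basis via the extension-vector lift. For the first stage, fix $G'=G'_k$ and count the ordered pairs $(e,e')$ with $\{e,e'\}\in E_O(G')$. For each such pair, label $e$ as a $\sigma_1$- or $\sigma_2$-flag according to whether $e\in\Bcl_1(G')$ or $e\in\Bcl_2(G')$. The two endpoints of $e'$ are then the two unlabelled vertices of a size-$4$ $\sigma_i$-flag. In that flag they form a black edge, are joined to the labelled edge by a connecting edge, carry at least one black vertex, and are disjoint from $e$. These are exactly the summands of $D(\sigma_i)$. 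Conversely, every induced copy of such a flag with $\im\theta = e$ yields exactly one pair in $E_O$.

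Summing the counts $c(D(\sigma_i);(G',e))$ over labelled edges $e$ gives $\sum_i \sum_{e\in\Bcl_i}c(D(\sigma_i);(G',e)) = 2|E_O(G')|$, since each unordered pair is counted once from each side. Next I rewrite the sum over labelled edges through the averaging operator. The unlabelled density of $\downflag{F}$ equals $q_\sigma(F)$ times the sum of the typed densities, up to the normalisations $\binom{\Delta}{2}$ for the two free vertices and one further $\binom{\Delta}{2}$-type factor for choosing the labelled edge near the black set. The asymmetry between the orientations of the one-black type $\bredge$ and the symmetric type $\edge$ accounts for the coefficient $2$ in front of $\llbracket D(\bredge)\rrbracket$. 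This bookkeeping of automorphism factors $q_\sigma$ against the ordered/unordered count is the step I expect to be the most delicate, and I would check it on the two types separately.

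The error terms are controlled as follows. Non-induced or degenerate configurations have one vertex fewer of freedom, and locality bounds their contribution by $O(\Delta^3)$, which is $o(1)$ after normalising by $\binom{\Delta}{2}^2$. Likewise, configurations in which the connecting edge hits $e'$ at a vertex already counted, or in which several connecting edges are present, are counted once since $E_O$ only records the pair; I will sum over induced flags rather than over connecting edges, so no multiplicity arises.

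For the second stage, I multiply by $(\ext_1^{\sigma_i})^{n-4}$ with $n=5$ and expand via the chain rule into size-$5$ flags. By $\phi(\ext_1^\sigma)=1$ on the regular class, and the product rule $\rho(f;G)\rho(g;G)=\rho(fg;G)+O(1/\Delta)$, the density changes by $o(1)$. Hence $\rho(O;G') = \rho(2\llbracket D(\bredge)\rrbracket + \llbracket D(\edge)\rrbracket;G') + o(1)$ pointwise, using regularity of $G'_k$ rather than only in the limit. Finally, expanding $O$ in the basis $(F_j)$ gives the sum $\sum_j O^{(j)}\rho(F_j;G'_k)$, which completes the identity. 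Nonnegativity of the coefficients $O^{(j)}$ follows because every term of the expansion has nonnegative chain-rule coefficients.
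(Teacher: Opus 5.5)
Your proof is correct, but it is organised differently from the paper's proof of this lemma.

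\textbf{The paper's route.} The paper argues directly at size $5$. It completes each $E_O$-pair to an induced $5$-vertex configuration and asserts that each configuration arises from boundedly many pairs. It then groups by isomorphism class and quotes the identity $\rho(O;G')=2|E_O(G')|/\binom{\Delta}{2}^2+o(1)$ from Section~\ref{sec:general} to read off the basis expansion.

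\textbf{Your route.} You prove that quoted identity, in the two stages Section~\ref{sec:general} only sketches: an exact size-$4$ count for $\llbracket D(\sigma_i)\rrbracket$, then the lift by $\ext_1^{\sigma_i}$. This gains three things.
\begin{itemize}
\item The multiplicity is exactly one, because you sum over induced flags rather than connecting edges.
\item Regularity enters only through the pointwise fact $\rho(\ext_1^{\sigma_i};(G',\eta))=1-1/\Delta$ for every embedding $\eta$.
\item The coefficient $2$ is explained rather than imported.
\end{itemize}
The paper's version is shorter but leaves both multiplicities and constants implicit. Your nonnegativity argument, via nonnegative product and averaging coefficients, is also more intrinsic than the paper's appeal to the solver's sign convention. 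Identifying those coefficients with the stored values $-\mathrm{target}_j/10^{12}$ remains a property of the certificate data in both arguments.

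\textbf{The deferred bookkeeping.} Your phrasing has the averaging relation the wrong way round. For a $2$-vertex type it reads $\rho(\llbracket F\rrbracket;G)=\Delta^{-2}\sum_\eta\rho(F;(G,\eta))+O(1/\Delta)$. So $q_\sigma$ multiplies the unlabelled density, not the typed sum. The labelled-edge normalisation is $\Delta^2=2\binom{\Delta}{2}(1+O(1/\Delta))$, not $\binom{\Delta}{2}$.

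Let $S_i$ be the number of ordered pairs $(e,e')$ with $\{e,e'\}\in E_O(G')$ and $e\in\Bcl_i(G')$, so that $S_1+S_2=2|E_O|$. The embedding sums are $S_1$ for $\sigma_1$ (one embedding per edge) and $2S_2$ for $\sigma_2$ (two orientations per edge). The objective therefore evaluates to $(2S_1+2S_2)/(\Delta^2\binom{\Delta}{2})=4|E_O|/(\Delta^2\binom{\Delta}{2})=2|E_O|/\binom{\Delta}{2}^2+o(1)$. Normalising by $\binom{\Delta}{2}$ instead would put you off by a factor of $2$.
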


\begin{proof}
Each unordered pair $\{e, e'\}$ counted by $|E_O(G'_k)|$, together with
its connecting incident edge and that edge's free endpoint, determines
the vertex set of an induced size-$5$ $(2,2)$-coloured subgraph of
$G'_k$; conversely each such subgraph arises from a bounded number of
pairs. Grouping by the isomorphism class $j$ of the induced flag,
converting the induced counts to the local densities $\rho(F_j; G'_k)$
of Section~\ref{sec:framework}, and collecting the
$\binom{\Delta}{k} \sim \Delta^k/k!$ asymptotics (a $1 + O(1/\Delta(G'_k))$
error per summand into the $o(1)$), the objective density
$\rho(O; G'_k) = 2\,|E_O(G'_k)|/\binom{\Delta(G'_k)}{2}^2$ of
Section~\ref{sec:general} equals $\sum_j O^{(j)}\,\rho(F_j; G'_k) + o(1)$,
the basis expansion of $O$. Since $\mathrm{target}_j \leq 0$, every
$O^{(j)} \geq 0$ and both sides are non-negative; the substitution
orients the cone identity into the standard form
$\lambda\emptyset - O \in \SemCone^\emptyset$ of
Section~\ref{sec:framework}.
\end{proof}

\subsubsection{Basis locality}

\begin{lemma}[certificate-basis locality, general case]
\label{lem:basis-local-gen}
Every $F_j$ in the size-$5$ basis $(F_j)_{j \in [17{,}950]}$ of
Lemma~\ref{lem:cbi-gen} is a local $\emptyset$-flag in $\Gcl$.
\end{lemma}

\begin{proof}
By the local-flag characterisation in $\Gcl$, $F_j$ is local if and
only if every connected component contains a black vertex. The basis
enumeration runs over isomorphism classes of $(2,2)$-coloured graphs
of size $5$; we may filter the enumeration to retain only those
with the required property. The basis as listed comprises exactly
the $17{,}950$ such filtered isomorphism classes.
\end{proof}

\subsubsection{The slack budget}

The integer identity~\eqref{eq:gen-per-flag}, multiplied by
$\linscale^2$, is exact in the certificate's PSD data $L_b, D_b,
\lambda_b, s_b$. The rationalised dual matrix $Y_{\mathrm{rat}}$
inherits a small residual from rationalising the SDP solver's
floating-point output. We bound the aggregated weighted
slack by an explicit budget.

\begin{lemma}[slack budget, general case]
\label{lem:slack-gen}
With $\linscale = 10^{12}$ and the certificate's dual weights $x_j$,
\[
\Bigl|\sum_{j = 1}^{17{,}950} x_j\,\mathrm{residual}_j\Bigr|
\leq \slackBudget,
\qquad
\slackBudget := 2 \times 10^{23},
\]
where $\mathrm{residual}_j := \linscale^2\,
(\trace(F_j Y_{\mathrm{rat}}) + \lambda_b\,\trace(F_j|_{\mathrm{psd}})
- c_j)$, $\lambda_b$ is the per-block Tikhonov shift, $F_j|_{\mathrm{psd}}$
is the restriction of $F_j$ to the PSD blocks, and
$c_j = \mathrm{target}_j$ is the certificate's target value for the
$j$-th dual constraint.
\end{lemma}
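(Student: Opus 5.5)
The slack budget statement is a finite arithmetic claim about explicit rational data, so the plan is to verify it by exact integer computation over the certificate. The residuals are organised to make the bound as cheap as possible to check.

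First, clear all denominators. Each $\mathrm{residual}_j$ is defined as $\linscale^2(\trace(F_j Y_{\mathrm{rat}}) + \lambda_b\,\trace(F_j|_{\mathrm{psd}}) - c_j)$. Here $Y_{\mathrm{rat}}$ has entries in $\linscale^{-1}\mathbb{Z}$ (the integer matrices $Y_b$ rescaled), $\lambda_b \in \{10^{-11}, 10^{-12}\}$, and $c_j = \mathrm{target}_j \in \mathbb{Z}$ enters at scale $\linscale$. So $\mathrm{residual}_j$ is an integer once the flag-product coefficients $F_j|_{\mathrm{psd}}$, which are rationals with denominators dividing the averaging factors $q_\sigma$ and binomial normalisations, are brought to a common denominator. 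I would fix that common denominator $N$ per block and record $N\cdot\mathrm{residual}_j \in \mathbb{Z}$.

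Second, I would split the weighted sum by source. The bulk comes from rationalising $Y$: rounding the solver's floating-point dual to $\linscale^{-1}\mathbb{Z}$ perturbs each entry by at most $\linscale^{-1}/2$. The Tikhonov shift then contributes $\lambda_b\,\trace(F_j|_{\mathrm{psd}})$, of order $10^{-11}$ times the block trace. Multiplying by $\linscale^2$, each contribution is at most about $10^{12}\cdot\|F_j\|_1$ per flag. The triangle inequality then gives
\[
\Bigl|\sum_j x_j\,\mathrm{residual}_j\Bigr| \leq \sum_j |x_j|\,\bigl(\tfrac12\linscale\,\|F_j\|_1 + \linscale^2\lambda_{\max}\|F_j|_{\mathrm{psd}}\|_1\bigr).
\]
Using $\sum_j |x_j| \lesssim 10^{1}$ (the dual weights essentially form a probability-like vector scaled by $\lambda\approx 10.6$) and $\|F_j\|_1$ bounded by the block dimensions ($d_b \le 55$, $39$ blocks), this lands at order $10^{23}$ and should fit under $2\times10^{23}$.

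Third, as the rigorous fallback, I would not rely on the a priori estimate but compute the left-hand side exactly. The sum runs over $17{,}950$ integer terms, each a product of an integer weight and an integer residual, and the check is the same single-pass integer computation used for~\eqref{eq:gen-per-flag}, formalised in Lean.

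The main obstacle is the first step: making the flag-product coefficients $F_j|_{\mathrm{psd}}$ exactly rational with a controlled common denominator, so that the residuals are genuinely integers and no floating point enters. I would handle it by deriving these coefficients from the chain-rule expansion with exact binomial arithmetic, reusing the expansion already needed for $\mathrm{cs}_b^{(j)}$ in the arithmetic identity.
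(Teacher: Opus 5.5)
Your third step, one exact pass over the certificate computing the signed integer sum $\sum_j x_j\,\mathrm{residual}_j$, is precisely the paper's proof, which records only that this integer is about $1.26\times 10^{23}$, below $2\times 10^{23}$. The triangle-inequality estimate of your second step is not needed and would not be a valid bound anyway: the residual also carries the floating-point solver's own constraint violation, not only the rounding of $Y$ and the Tikhonov term, and $\sum_j|x_j|\lesssim 10$ is an unsupported guess. The lemma therefore rests entirely on the exact computation.
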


\begin{proof}
The residuals are $17{,}950$ integers, which a single pass over the
certificate computes. The certificate's dual weights $x_j$ are integers. The signed
weighted sum is an integer; the measured value is approximately
$1.26 \times 10^{23}$, comfortably below $2 \times 10^{23}$.
\end{proof}

\begin{proof}[Proof of Lemma~\ref{lem:sec-general-density}]
By~\ref{c1-gen}--\ref{c3-gen} the
identity~\eqref{eq:gen-decomp} holds modulo the per-flag slack
$\mathrm{residual}_j$ at the integer level. The slack budget of
Lemma~\ref{lem:slack-gen} controls the aggregated residual at
the eval-cone level, $\big|\sum_j x_j \mathrm{residual}_j\big| \leq
2 \times 10^{23} = \slackBudget$, which covers the aggregated
residual at the bound $\lambda = 10.644$. Combining
gives~\eqref{eq:gen-decomp}, hence
$10.644\,\emptyset - O \in \SemCone^\emptyset$.
\end{proof}

\subsection{The bipartite certificate}
\label{sec:cert-bipartite}

The bipartite certificate $\mathfrak{C}_{\mathrm{bip}}$ follows the
same template as the general one. The basis is the size-$5$
unlabelled local-flag basis in $\Gcl_{\mathrm{bip}}$ of cardinality
$\ell_{\mathrm{bip}} = 3{,}808$. The certificate has $52$
Cauchy--Schwarz blocks indexed by local types in $\Gcl_{\mathrm{bip}}$
with $|\sigma| \in \{1, 3\}$. The objective vector $O$ is the bipartite
three-summand one of Section~\ref{sec:bipartite}. The constraints
are:
\begin{itemize}
\item extension differences
$\llbracket \ext_i^\sigma - \ext_j^\sigma\rrbracket$ for local
types of $\Gcl_{\mathrm{bip}}$;
\item bipartite vertex-count equalities
$\phi(\ext_{B,1}^\sigma) = \phi(\ext_{R,1}^\sigma) = 1$
(black and red vertex sets each of size exactly $\Delta(G'')$);
\item minimum degree constraints
$\phi(D'(\sigma)) \geq 2(2-\eta)$ for each of the three
black-edge local types $\sigma \in \{\bredge, \blredge, \bgedge\}$.
\end{itemize}
The slack budget is $\slackBudgetBip := 1 \times 10^{23}$
(measured slack $\approx 4.19 \times 10^{22}$, safety ratio
$\approx 2.4\times$).

The decomposition of $\lambda_{\mathrm{bip}}\emptyset - O$ with
$\lambda_{\mathrm{bip}} = 4.093$ is
\[
\tfrac{4093}{1000}\emptyset - O
= \mathrm{linSum}_{\mathrm{bip}}
+ \sum_{b = 0}^{51} \mathrm{csBlock}_b,
\]
with all summands in $\SemCone^\emptyset$ by the same arguments as
in Section~\ref{sec:cert-general}. The bipartite analogues of Lemmas~\ref{lem:cbi-gen}
and~\ref{lem:basis-local-gen} record the combinatorial bridge
identity and basis locality.

\begin{lemma}[combinatorial bridge identity, bipartite case]
\label{lem:cbi-bip}
For every $\Delta$-increasing sequence of bipartite pairs
$(G_k, F_k, e_k)$ and the $(4,2)$-coloured reduction $G''_k$,
\[
\frac{|E_O(G''_k)|}{\binom{\Delta(G''_k)}{2}^2}
= \sum_{j = 1}^{3{,}808} O^{(j),\mathrm{bip}}\,\rho(F_j; G''_k)
+ o(1)
\]
as $\Delta(G''_k) \to \infty$, where
$O^{(j),\mathrm{bip}} = -\mathrm{target}^{\mathrm{bip}}_j / \linscale \geq 0$
(again $\mathrm{target}^{\mathrm{bip}}_j \leq 0$).
\end{lemma}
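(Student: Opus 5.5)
The plan is to follow the proof of Lemma~\ref{lem:cbi-gen} step for step, replacing the two-colour accounting by the four-colour one. The factor $2$ disappears in the bipartite normalisation, and we need to see why before anything else.

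\emph{Step 1: the pair-to-flag correspondence.} Fix a pair $\{e,e'\}\in E_O(G''_k)$. These are non-incident black edges joined by a connecting edge $g$, and each of $e,e'$ carries a black or red vertex. Since $G''$ is bipartite, $g$ meets one endpoint of $e$ and one endpoint of $e'$. The union $V(e)\cup V(e')$ therefore has four vertices. We adjoin one further vertex, the auxiliary vertex supplied by the factor $(\ext_1^{\sigma})^{n-4}$, to obtain an induced size-$5$ four-colour subgraph. We classify the pair by the type $\sigma\in\{\bredge,\blredge,\bgedge\}$ of its distinguished edge $e$; these are the only black-edge types that contain an $f$-side vertex and respect the bipartition encoding $\{\text{black, blue}\}\cup\{\text{red, green}\}$.

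We then check that the three summands count disjoint sets of ordered representatives, and that each unordered pair is counted exactly once after the averaging $\llbracket\cdot\rrbracket$ is taken into account. In the general case the two roles of $e$ and $e'$ gave a symmetric double count. Here the convention in $D(\sigma)$ that the unlabelled edge has an $f$-side endpoint, together with the factor $q_\sigma$, should absorb that symmetry. This is exactly the coefficient bookkeeping we must verify.

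\emph{Step 2: the $\Delta$-normalisation.} For a fixed labelled edge of type $\sigma$, the number of extensions to a valid $e'$ is $\Theta(\Delta^2)$, since we choose the endpoint of $g$ and then the far endpoint of $e'$. The number of labelled edges of each type is $\Theta(\Delta^2)$, because exactly $\Delta$ black and exactly $\Delta$ red vertices lie in $\Gcl_{\mathrm{bip}}$. Hence the pair count has order $\binom{\Delta}{2}^2$, matching $\rho$ for size-$4$ flags over a $2$-vertex type, followed by averaging. The lift to size $5$ multiplies by $\ext_1^{\sigma}$. By regularity of $\Gcl_{\mathrm{bip}}$, $\phi(\ext_1^\sigma)=1$ in the limit, so the lift changes densities only by $O(1/\Delta)$. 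The incident pairs and the degenerate configurations, where $e'$ meets the labelled vertices or the auxiliary vertex coincides with another, contribute $O(\Delta^3)$, which is $o(\binom{\Delta}{2}^2)$.

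\emph{Step 3: expansion in the basis.} Using the chain rule, we expand each $\llbracket D(\sigma)(\ext_1^\sigma)\rrbracket$ in the size-$5$ unlabelled basis $(F_j)_{j\le 3808}$. This produces coefficients $O^{(j),\mathrm{bip}}$ that are non-negative, since every term in the expansion is a positive combination of flags. These coefficients coincide with $-\mathrm{target}^{\mathrm{bip}}_j/\linscale$ by how the certificate was generated. Every flag that appears has a black or red vertex in each component, so it lies in the local basis. Combining Steps 1--3 with the product rule $\rho(f;G)\rho(g;G)=\rho(fg;G)+O(1/\Delta)$ gives the identity with an $o(1)$ error.

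The main obstacle is Step 1, namely getting the multiplicity exactly right so that no factor $2$ appears, in contrast with Lemma~\ref{lem:cbi-gen}. The first attempt will be to compute the automorphism and stabiliser factors $q_\sigma$ explicitly for each of the three types, and to test the identity on the complete bipartite example $K_{\Delta,\Delta}$, where every quantity can be computed by hand.
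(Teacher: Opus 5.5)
Your Steps~2 and~3 follow the paper's route: its proof is Lemma~\ref{lem:cbi-gen} rerun on the four-colour basis. The gap is in Step~1, which is the only point where this lemma says anything beyond Lemma~\ref{lem:cbi-gen}. There you leave the multiplicity unverified, and the mechanism you propose for the disappearing factor $2$ does not work.

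The requirement in $D(\sigma)$ that the unlabelled edge carry an $f$-side vertex cannot break the $e \leftrightarrow e'$ symmetry. The reason is that $E_O$ already demands a black or red vertex on \emph{both} edges. So every pair $\{e,e'\} \in E_O$ appears in $\sum_\sigma D(\sigma)$ once with $e$ labelled and once with $e'$ labelled, exactly as in the general case. That double count is common to both cases and is absorbed by the same averaging normalisation, so it is not what distinguishes them. (Under the standard convention, each labelling of a $4$-set is weighted by $1/(4\cdot 3)$, and $12\binom{\Delta}{4} \sim 2\binom{\Delta}{2}^2$.) Nor can $q_\sigma$ supply a compensating $\tfrac12$ specific to the bipartite case. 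Each of the three bipartite edge types joins a vertex coloured from $\{\text{black},\text{blue}\}$ to one coloured from $\{\text{red},\text{green}\}$, so none has an automorphism exchanging its two labels.

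The missing observation is that the factor $2$ in Lemma~\ref{lem:cbi-gen} comes from the coefficients of the objective, not from the pair symmetry. The general objective is $2\llbracket D(\text{black--red})\rrbracket + \llbracket D(\text{black--black})\rrbracket$, lifted to size $5$. The black--black type, unlike the black--red one, does have an endpoint-swapping automorphism, so each black--black edge is seen through two labellings. The coefficient $2$ on the black--red summand puts the two types on the same footing. Hence every ordered pair $(e,e')$ carries total weight $2$, and the normalisation returns the numerator $2|E_O|$.

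In the bipartite objective all three types are asymmetric and all enter with coefficient $1$. So every ordered pair carries weight $1$, and the identical computation returns $|E_O|$. This is the reason the paper's proof records: the leading $2$ in the general objective produces the $2|E_O|$ numerator, and it is absent here. Once Step~1 is argued this way rather than through the $f$-side convention, your proof coincides with the paper's.
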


\begin{proof}
This is identical to Lemma~\ref{lem:cbi-gen} with the four-colour bipartite
basis in place of the two-colour general one; the three-summand
objective $D(\bredge) + D(\blredge) + D(\bgedge)$ counts each
$E_O$-pair once, so the numerator is $|E_O|$ (in the general case the
leading $2$ on $D(\bredge)$ produces the $2\,|E_O|$ numerator).
\end{proof}

\begin{lemma}[certificate-basis locality, bipartite case]
\label{lem:basis-local-bip}
Every $F_j$ in the bipartite size-$5$ basis
$(F_j)_{j \in [3{,}808]}$ is a local $\emptyset$-flag in
$\Gcl_{\mathrm{bip}}$.
\end{lemma}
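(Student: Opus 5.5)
The plan is to mirror the proof of Lemma~\ref{lem:basis-local-gen}, replacing the general locality rule by the bipartite one. The rule recorded for $\Gcl_{\mathrm{bip}}$ in Section~\ref{sec:bipartite} says that a $\sigma$-flag is local if and only if every connected component contains a labelled, black, or red vertex. For an unlabelled ($\sigma=\emptyset$) flag this simplifies: $F_j$ is local if and only if every connected component of $F_j$ contains a black or red vertex. Black and red are exactly the $f$-side neighbourhoods $N(v)$ and $N(u)$, each of size exactly $\Delta(G'')$.

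First I would justify this characterisation, since the paper only asserts it ``matches Section~\ref{sec:general}''.

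For sufficiency, consider a component $C$ of $F$ containing an anchor vertex $w$ (black or red). The number of embeddings of $C$ in $G''$ is bounded as follows.
\begin{itemize}
\item There are at most $\Delta$ choices for $w$, since the anchor colour classes have size $\Delta$.
\item Each further vertex of $C$ is then chosen along a spanning tree of $C$ rooted at $w$, with at most $\Delta$ choices per vertex by regularity.
\end{itemize}
This gives $O(\Delta^{|C|})$ copies of $C$. Multiplying over components gives $c(F;G'')=O(\Delta^{|F|})$, so $\rho(F;\cdot)$ is bounded.

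Label-extensions $F^v$ preserve the property. Labelling a vertex only adds a labelled anchor, and every component of $F^v$ still contains a labelled, black, or red vertex. The same tree-counting argument, now rooted at labelled vertices (which have $O(1)$ choices), shows the density stays bounded. Hence $F$ is local by the recursive definition.

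Necessity is not needed for this lemma. We only need every basis element to be local, not that every local flag is in the basis.

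Second, I would connect this to how the basis is produced. The size-$5$ enumeration runs over isomorphism classes of $(4,2)$-coloured graphs respecting the bipartition encoding $\{\text{black, blue}\}\cup\{\text{red, green}\}$, meaning every edge joins the two classes. The enumeration is filtered to those in which every component meets a black or red vertex, and the listed basis consists exactly of the $3{,}808$ filtered classes. This is a finite check on the certificate data, of the same kind as in Lemma~\ref{lem:basis-local-gen}. Combined with the sufficiency direction, it gives locality of every $F_j$.

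The main obstacle is making the sufficiency direction honest in the bipartite four-colour setting. Blue and green vertices are unconstrained in number, so a component consisting only of blue and green vertices would contribute a factor $|V(G'')|$ rather than $\Delta$. The argument therefore genuinely needs the exact size-$\Delta$ bounds on the black and red classes that define $\Gcl_{\mathrm{bip}}$. I would state these explicitly as the input, and check that the chain-rule lifts used in the certificate (such as $\ext$ multiplications) never create unanchored components, so that the filter is preserved.
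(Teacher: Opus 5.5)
Your proposal is correct and follows the paper's proof. That proof uses exactly your two ingredients: the sufficient condition ``every connected component contains a labelled, black, or red vertex'' for locality in $\Gcl_{\mathrm{bip}}$, and the fact that the size-$5$ enumeration is filtered by that condition. Your spanning-tree count (an anchor among at most $\Delta$ black or red vertices, then at most $\Delta$ choices per further vertex, with the bound persisting under label-extension) supplies the sufficiency direction that the paper only asserts by analogy with Section~\ref{sec:general}, and you are right that necessity is not needed.
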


\begin{proof}
$F_j$ is local in $\Gcl_{\mathrm{bip}}$ if every connected component
contains a labelled, black, or red vertex. The basis enumeration
filters by exactly this property.
\end{proof}

\begin{proof}[Proof of Lemma~\ref{lem:sec-bip-density}]
This is identical to the proof of Lemma~\ref{lem:sec-general-density},
via the bipartite certificate's $52$ blocks and the
$\slackBudgetBip = 1 \times 10^{23}$ budget, and yields
$4.093\,\emptyset - O \in \SemCone^\emptyset$ in
$\Lcl^\emptyset_5(\Gcl_{\mathrm{bip}})$.
\end{proof}

\subsection{The asymmetric certificate}
\label{sec:cert-asym}

The asymmetric bound of Theorem~\ref{thm:asym-sec} is witnessed by a
certificate distinct from the bipartite one of
Section~\ref{sec:cert-bipartite}: a size-$5$ solve on the four-colour
class, whose unlabelled basis is the four-colour one of $334$ local
$\emptyset$-flags (Lemma~\ref{lem:basis-local-asym}), with the
three-summand objective $O$ of Section~\ref{sec:asym-objective}. The
solve witnesses
\[
4.5496\,\emptyset - O \in \SemCone^\emptyset
\qquad\text{in }\Lcl^\emptyset_5\text{ on the regular four-colour class,}
\]
i.e.\ $\phi(O) \leq 4.5496$ (Lemma~\ref{lem:asym-sec-density}). The
cone witness comprises $22$ Cauchy--Schwarz blocks (against the $52$
of the bipartite certificate), the per-constraint linear residual, and
the integer LDL identities. Because the certificate is
stated on the \emph{regular} four-colour class, it carries no
dependence on the ratio $r$: the degree-balancing blow-up
(Lemma~\ref{lem:asym-blowup}) maps every biregular graph, whatever its
$r$, into that class, so a single solve serves all rational ratios.
The optimum $\phi(O) \leq 4.5496$ is looser than the bipartite
$\phi(O) \leq 4.093$ because the asymmetric certificate bounds the
all-vertex density, forgoing the bipartite route's minimum degree
tightening (Remark~\ref{rem:asym-looser}).

The slack budget is $1 \times 10^{23}$ (measured slack
$\approx 1.9 \times 10^{20}$, safety ratio $\approx 500\times$). The
decomposition of $\lambda_{\mathrm{asym}}\emptyset - O$ with
$\lambda_{\mathrm{asym}} = 4.5496$ is
\[
\tfrac{45496}{10000}\emptyset - O
= \mathrm{linSum}_{\mathrm{asym}}
+ \sum_{b = 0}^{21} \mathrm{csBlock}_b,
\]
with all summands in $\SemCone^\emptyset$ by the same arguments as in
Section~\ref{sec:cert-general}; the $22$ blocks and the integer LDL
identities are exactly those the proof of
Lemma~\ref{lem:asym-sec-density} invokes.

\begin{lemma}[combinatorial bridge identity, asymmetric case]
\label{lem:cbi-asym}
Fix a rational $r \in (0, 1]$. For every $\Delta_A$-increasing
sequence of asymmetric bipartite pairs $(G_k, F_k, e_k)$ with $G_k$
side maxima $\Delta_A(G_k), r\Delta_A(G_k)$, and the four-colour
reduction $G''_k \in \Gcl_{\mathrm{asym}, r}$,
\[
\frac{|E_O(G''_k)|}{\binom{\Delta_A}{2}\binom{\Delta_B}{2}}
= \sum_{j = 1}^{334}
O^{(j), \mathrm{asym}}\,\rho(F_j; G''_k) + o(1)
\quad\text{as } \Delta_A \to \infty,
\]
where $O^{(j),\mathrm{asym}} = -\mathrm{target}^{\mathrm{asym}}_j /
\linscale \geq 0$ are the coefficients of the four-colour objective.
\end{lemma}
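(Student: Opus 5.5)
The plan is to mirror Lemma~\ref{lem:cbi-gen} and Lemma~\ref{lem:cbi-bip}: count $E_O$-pairs through induced size-$5$ configurations and regroup them by isomorphism class of the four-colour flag. The only new feature is that the two sides have different degrees, so the normalisation changes. I would begin with the objects each pair determines. An unordered pair $\{e,e'\}$ in $E_O(G''_k)$ consists of two non-incident black edges joined by a connecting edge, each carrying at least one black or red vertex. Together with the connecting edge, such a pair spans a vertex set of size at most $5$ once we include the labelled edge $e$ and a free endpoint.

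Using induced density counting, I would write $|E_O(G''_k)| = \sum_j w_j\, c(F_j; G''_k)$. Here $c(F_j;\cdot)$ is the induced count of Section~\ref{sec:framework}, and $w_j \ge 0$ is the number of $E_O$-pairs realised inside a fixed copy of $F_j$. This weight is bounded, since $F_j$ has $5$ vertices.

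Next I would convert counts to densities. In the class $\Gcl_{\mathrm{asym},r}$, the extension structure has $\phi(\ext_A^\sigma)=1$ and $\phi(\ext_B^\sigma)=r$. A configuration contributing at leading order has two unlabelled vertices chosen from an $A$-side neighbourhood (about $\Delta_B$ choices each, normalised by $\Delta_A$) and two from a $B$-side neighbourhood. Only configurations whose unlabelled vertices hang off the black/red anchors contribute at order $\Delta_A^2\Delta_B^2$, and all others are $o(\cdot)$ by the incident-pair argument of Section~\ref{sec:general}. Hence $c(F_j;G''_k) = \rho(F_j;G''_k)\binom{\Delta_A}{|F_j|}$, and rewriting $\binom{\Delta_A}{2}^2$ against $\binom{\Delta_A}{2}\binom{\Delta_B}{2}$ gives a factor $r^{-2}(1+O(1/\Delta_A))$ for $r$ fixed. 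I would then absorb this factor, together with $w_j$ and the lift by $(\ext_1^{\sigma})^{n-4}$, into the definition of $O^{(j),\mathrm{asym}}$, defining these coefficients as the combined constant. Nonnegativity follows because every ingredient is nonnegative, and $\mathrm{target}_j\le0$ is just the solver's sign convention.

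Finally, the error terms are handled as follows. Each $\rho(F_j;G''_k)$ is bounded, because every basis flag is local (the analogue of Lemma~\ref{lem:basis-local-bip}). With $334$ summands and a $1+O(1/\Delta_A)$ error on each binomial ratio, the total error is $o(1)$. Incident pairs contribute $O(\Delta_A^2\Delta_B)=o(\Delta_A^2\Delta_B^2)$.

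The main obstacle is the degree bookkeeping. The local density in the class is normalised by $\Delta(G)=\Delta_A$ on both sides, while the target is $\binom{\Delta_A}{2}\binom{\Delta_B}{2}$. So I must check that every contributing flag has exactly two unlabelled "$B$-side-neighbourhood" vertices, which makes the $r^2$ factor uniform across $j$. If instead the contributing flags mix patterns (some with one $A$-extension and one $B$-extension at each end), the factor would vary with $j$. In that case I would absorb $r$-dependent constants flag by flag into $O^{(j),\mathrm{asym}}$. This is consistent with the remark that the raw objective scales as $r^2$, and that ratio-independence is recovered only afterwards via Lemma~\ref{lem:asym-blowup}.
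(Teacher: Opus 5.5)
Your outline (regroup the $E_O$-pairs by induced flag, then trade the normalisation $\binom{\Delta_A}{2}^2$ for $\binom{\Delta_A}{2}\binom{\Delta_B}{2} = \tfrac14 r^2\Delta_A^4(1-o(1))$) is the same adaptation of Lemma~\ref{lem:cbi-bip} that the paper makes. The step that closes your argument, however, is not available. The lemma does not let you choose $O^{(j),\mathrm{asym}}$. They are fixed as $-\mathrm{target}^{\mathrm{asym}}_j/\linscale$, the basis coefficients of the one objective $O$ bounded by the single ratio-independent $334$-flag certificate. So they cannot absorb $r^{-2}$ or flag-dependent powers of $r$. Redefining them as ``the combined constant'' proves an identity for a different, $r$-dependent vector, about which Lemma~\ref{lem:asym-sec-density} says nothing. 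The paper's proof, by contrast, leaves the coefficients alone and attributes the $r^2$ to the normalisation. Your own bookkeeping shows why this matters. With the fixed coefficients and $\Delta_A$-normalised densities, the right-hand side is $|E_O(G''_k)|$ normalised by $\Delta_A^4$, times $r$-independent constants and the lift factors $\phi(\ext_1^\sigma)\in\{1,r\}$ (according to the side of label $1$). The left-hand side instead normalises by $\tfrac14\Delta_A^2\Delta_B^2$. These agree at $r=1$, so in general they differ by $r^2$ times the lift factors. That discrepancy has to be tracked explicitly through the identity and into its use in the proof of Theorem~\ref{thm:asym-sec}, not hidden inside the coefficients.

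The counting step also fails as written. An $E_O$-pair spans only four vertices, so $\sum_j w_j\,c(F_j;G''_k)$ counts it once for every admissible fifth vertex rather than once. There are of order $\Delta_A$ such vertices, and their number depends on the pair. Correspondingly, your conversion never reconciles the size-$5$ normalisation $\binom{\Delta_A}{5}$ with the $\binom{\Delta_A}{2}^2$ you then rescale. The count must be made at size $4$, through the flags $\llbracket D(\sigma)\rrbracket$. It is then lifted to size $5$ exactly as $O$ is built, by multiplying by $\ext_1^\sigma$. This restricts the fifth vertex to the neighbourhood of label $1$, giving the uniform multiplicity $\deg(\theta(1))$, i.e.\ relative density $1$ or $r$ per type. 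That per-type computation is what your final paragraph anticipates but leaves undone, and it is where the powers of $r$ are actually determined.
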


\begin{proof}
The combinatorial bijection of Lemma~\ref{lem:cbi-bip} adapts, with
the single symmetric pair count $\binom{\Delta}{2}^2$ replaced by the
product $\binom{\Delta_A}{2}\binom{\Delta_B}{2}$ of the two side
counts (the factor is again $1$), and the symmetric bipartite basis
replaced by the asymmetric four-colour one. Since $\Delta_B = r\Delta_A$, this normalisation
equals $\tfrac14 r^2\Delta_A^4(1 - o(1))$, which is where the $r^2$
enters; it reduces to $\binom{\Delta}{2}^2$ at $r = 1$.
\end{proof}

\begin{lemma}[certificate-basis locality, asymmetric case]
\label{lem:basis-local-asym}
For every rational $r \in (0, 1]$, every $F_j$ in the four-colour
size-$5$ basis $(F_j)_{j \in [334]}$ is a local $\emptyset$-flag in
$\Gcl_{\mathrm{asym}, r}$.
\end{lemma}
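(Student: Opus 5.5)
The plan is to mirror the proofs of Lemmas~\ref{lem:basis-local-gen} and~\ref{lem:basis-local-bip}. There are two parts: first establish the combinatorial characterisation of locality in $\Gcl_{\mathrm{asym},r}$, and then observe that the $334$-flag basis was enumerated subject to exactly that characterisation.

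\textbf{Step 1: the locality criterion.} The characterisation to aim for is the same rule as in Section~\ref{sec:bipartite}: a $\sigma$-flag is local in $\Gcl_{\mathrm{asym},r}$ if and only if every connected component contains a labelled, black, or red vertex.

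For sufficiency, I would bound $c(F;G)$ by embedding the unlabelled vertices component by component.
\begin{itemize}
\item A component containing a black or red vertex is anchored there. The black and red sets are the neighbourhoods $N(v)$ and $N(u)$ of the reference edge, so each has size at most $\Delta_A$.
\item Every further vertex of that component is reached along an edge. Each such step contributes at most $\Delta_A$ choices, since $\Delta_B = r\Delta_A \le \Delta_A$.
\item A component containing a labelled vertex needs no anchor; every unlabelled vertex in it is reached along edges from the labelled vertex.
\end{itemize}
Altogether $c(F;G) = O(\Delta_A^{|F|-|\sigma|})$. Dividing by $\binom{\Delta_A}{|F|-|\sigma|}$ shows that $\rho(F;\cdot)$ is bounded, with a constant that is uniform in $r$.

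The hereditary condition is that every label-extension $F^v$ is also local. This holds because labelling a vertex only adds anchors, so the component property is preserved.

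For necessity, a component with no anchor could be placed among the blue or green vertices, of which there are unboundedly many relative to $\Delta_A$. The density of such a flag is then unbounded. This direction is not actually needed for the lemma.

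\textbf{Step 2: the basis.} The four-colour basis is enumerated over isomorphism classes of size-$5$ $(4,2)$-coloured bipartite graphs that respect the colour encoding of the bipartition. The enumeration is filtered by the anchor property, so each of the $334$ flags is local by Step~1.

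I must also check that the filter does not depend on $r$. This holds because the criterion in Step~1 never refers to $r$: the value of $r$ enters only as a bounded multiplicative factor $r \le 1$ in the degree counts.

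\textbf{Main obstacle.} The step I expect to need the most care is the uniformity in $r$ within Step~1. If $r$ were allowed to vary, then extensions through $B$-vertices would have density $r$ rather than $1$. The fix is to bound every edge-step uniformly by $\Delta_A$, the maximum degree that normalises $\rho$. Doing so makes boundedness, and hence locality, hold simultaneously for all rational $r \in (0,1]$.
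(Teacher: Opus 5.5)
Your proof follows the same route as the paper's. You use a component-anchor criterion for locality, observe that the $334$-flag enumeration is filtered by that criterion, and note that $r$ enters only through the sizes of the coloured vertex sets, so the filter does not depend on the ratio. The paper only asserts the criterion, phrased as ``every component contains a labelled or high-side vertex''. You instead use the labelled/black/red anchors of the bipartite rule and prove sufficiency by the spanning-tree count, with every step bounded by $\Delta_A$. Your formulation is the one that count supports: a component whose only anchor is a blue ($A$-side, non-neighbourhood) vertex has unbounded density, so an $A$-side anchor alone would not suffice.
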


\begin{proof}
$F_j$ is local in $\Gcl_{\mathrm{asym}, r}$ if every connected
component contains a labelled or high-side vertex; the four-colour
basis enumeration filters by exactly this property. The asymmetry
ratio enters only through the relative size of the low-side vertex set
$r\Delta_A$, which is positive at large $\Delta_A$ for every rational
$r \in (0, 1]$ with $\Delta_A$ a multiple of the denominator.
\end{proof}

\section{Brualdi--Quinn Massey for random bipartite graphs}
\label{sec:random}

Two trivial lower bounds on $\chi'_s(G)$ are the \emph{strong clique
number} $\omega(L(G)^2)$ and the ratio $|E(G)|/\nu_s(G)$, where
$\nu_s(G) := \alpha(L(G)^2)$ is the \emph{induced matching number}. The
Brualdi--Quinn Massey conjecture implies
$\omega(L(G)^2) \leq \Delta_A(G)\,\Delta_B(G)$ and
$\nu_s(G) \geq |E(G)|/(\Delta_A(G)\,\Delta_B(G))$; before turning to the
random model, we record these two necessary conditions unconditionally.
Both are asymmetric readings of the symmetric
Faudree--Gy\'arf\'as--Schelp--Tuza bipartite bounds
$\omega(L(G)^2) \leq \Delta(G)^2$~\cite{faudreeStrongChromaticIndex1990}
and $\nu_s(G) \geq |E(G)|/\Delta(G)^2$~\cite{faudreeInducedMatchingsBipartite1989}.
Subsequent work on the clique number of $L(G)^2$ and on strong cliques
in related settings includes that of Faron and
Postle~\cite{faronPostleCliqueLineSquare2019}, who established the
upper bound $\omega(L(G)^2) \leq (4/3 + o(1))\Delta(G)^2$ for general
$G$, and of Cames~van~Batenburg, Kang, and
Pirot~\cite{camesvanBatenburgStrongCliques2019}, who studied strong
cliques under forbidden-cycle hypotheses.

\begin{proposition}[asymmetric strong clique and induced matching bounds]
\label{prop:asym-necessary}
Let $G$ be a bipartite graph with bipartition $V(G) = A \cup B$ and side
maximum degrees $\Delta_A := \max_{a \in A}\deg_G(a)$ and
$\Delta_B := \max_{b \in B}\deg_G(b)$. Then
\begin{enumerate}[label=\textup{(\alph*)}]
\item $\omega(L(G)^2) \leq \Delta_A \cdot \Delta_B$; and
\item $|E(G)| \leq \nu_s(G)\cdot\Delta_A\cdot\Delta_B$, equivalently
$\nu_s(G) \geq |E(G)|/(\Delta_A\cdot\Delta_B)$.
\end{enumerate}
\end{proposition}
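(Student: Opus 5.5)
The plan is to prove the two parts separately, both by elementary counting in the bipartite graph $G$. No local-flag machinery is needed, since neither statement is asymptotic. Throughout I use the bipartite description of adjacency in $L(G)^2$: two distinct edges $a_1b_1, a_2b_2$ with $a_i \in A$, $b_i \in B$ are adjacent in $L(G)^2$ if and only if $a_1 = a_2$, or $b_1 = b_2$, or $a_1b_2 \in E(G)$, or $a_2b_1 \in E(G)$. The reason is that any edge joining the two would have to run between an $A$-endpoint of one and a $B$-endpoint of the other.

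For (a), fix a strong clique $C \subseteq E(G)$. Let $A' \subseteq A$ and $B' \subseteq B$ be the endpoint sets of $C$, and write $d_C(x)$ for the $C$-degree of a vertex $x$.
\begin{enumerate}
\item I first record the key structural fact. If $a \in A'$, $b \in B'$ and $ab \notin E(G)$, then every $C$-edge $ax$ and every $C$-edge $yb$ satisfy $yx \in E(G)$. Indeed, $ax$ and $yb$ cannot share a vertex, because $x = b$ or $y = a$ would force $ab \in E(G)$; so the only remaining possibility is $yx \in E(G)$.
\item Next I pick $a_0 \in A'$ maximising $d_C(a_0)$, and aim for an injection of $C$ into $N_G(a_0) \times \{1,\dots,\Delta_B\}$. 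The idea is to charge each $C$-edge $yx$ either to its own $B$-endpoint $x$, when $x \in N_G(a_0)$, or, via the structural fact, to a $B$-vertex of $N_G(a_0)$ adjacent to $y$.
\item To keep the charging injective, I would order the $C$-edges at each $B$-vertex and use the maximality of $d_C(a_0)$ to bound the load on each $x \in N_G(a_0)$ by $\Delta_B$. Since $|N_G(a_0)| \le \Delta_A$, this gives $|C| \le \Delta_A\Delta_B$.
\end{enumerate}
Making this charging rule genuinely injective is the step I expect to be the main obstacle. If it resists, I will fall back on the symmetric Faudree--Gy\'arf\'as--Schelp--Tuza argument for $\omega(L(G)^2) \le \Delta(G)^2$ \cite{faudreeStrongChromaticIndex1990}, and check that each occurrence of $\Delta$ there is naturally an $A$-side or a $B$-side degree.

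For (b), I argue by induction on $|E(G)|$ using a greedy deletion. Choose an edge $ab$ and put it in the induced matching. Then delete all vertices of $N(a) \cup N(b)$; this removes every edge at distance at most $1$ from $ab$, so the remaining graph $G^-$ together with $ab$ still yields an induced matching. By induction, $\nu_s(G^-) \ge |E(G^-)|/(\Delta_A\Delta_B)$, and deletion cannot increase $\Delta_A$ or $\Delta_B$. It therefore suffices to find an edge $ab$ whose deletion removes at most $\Delta_A\Delta_B$ edges.

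A crude count of the removed edges gives $\sum_{b' \in N(a)} \deg(b') + \sum_{a' \in N(b)} \deg(a')$ minus the doubly counted edges between $N(b)$ and $N(a)$. This is only at most $2\Delta_A\Delta_B$, so the choice of $ab$ has to be clever, and this is the second obstacle.

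I would first try choosing $a$ of minimum degree $\delta_A$ in $A$ and $b \in N(a)$ of minimum degree among its neighbours. Failing that, I would average over all edges $ab$ the number of removed edges. Then $\sum_{ab}\sum_{b' \in N(a)} \deg(b') = \sum_{b'} \deg(b') \sum_{a \in N(b')} \deg(a)$, and the aim is to compare this quantity with $|E|\,\Delta_A\Delta_B$ once the overlap term is accounted for. If both attempts fail, the weaker factor-$2$ loss from the crude count shows exactly where the remaining gap lies.
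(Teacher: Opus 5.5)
Your part (b) has a genuine gap. The induction needs an edge $ab$ such that deleting $N(a)\cup N(b)$ removes at most $\Delta_A\Delta_B$ edges, and such an edge need not exist. In $C_6$ we have $\Delta_A=\Delta_B=2$, but for every edge your own count gives $(2+2)+(2+2)-3=5>4$. Since $C_6$ is edge-transitive, no selection rule and no averaging can repair this. Every even cycle of length at least $6$ shows the same obstruction at the first step, even though (b) holds there ($\nu_s(C_6)=2\ge 6/4$).

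The underlying problem is that the strong neighbourhood of $ab$ has a $B$-side part (edges at $N(a)$) and an $A$-side part (edges at $N(b)$), and each can already cost $\Delta_A\Delta_B$. A working argument must charge each edge through one side only.

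The paper avoids the issue with a degree-balancing blow-up. Replace each $a\in A$ by $\Delta_A$ independent copies and each $b\in B$ by $\Delta_B$ copies, and replace each edge by a complete bipartite block. The resulting bipartite $G^\star$ has $\Delta(G^\star)\le\Delta_A\Delta_B$ and $|E(G^\star)|=\Delta_A\Delta_B|E(G)|$. An induced matching of $G^\star$ meets each block at most once and projects injectively onto an induced matching of $G$. So the symmetric Faudree--Gy\'arf\'as--Schelp--Tuza bound $\nu_s\ge|E|/\Delta^2$, applied to $G^\star$, yields (b). The same blow-up gives (a), since a strong clique of $G$ becomes one $\Delta_A\Delta_B$ times larger.

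A direct one-sided argument also works. Take $S\subseteq A$ minimal such that every non-isolated vertex of $B$ has a neighbour in $S$. Minimality gives each $a\in S$ a neighbour $b_a$ with no other neighbour in $S$, so $\{ab_a : a\in S\}$ is an induced matching. Meanwhile $|E(G)|\le\sum_{a\in S}\sum_{b\in N(a)}\deg(b)\le|S|\,\Delta_A\Delta_B$.

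Your part (a), by contrast, is a genuinely different and more elementary route than the paper's, which again goes through the blow-up and the cited symmetric clique bound. The charging you were unsure about can be made injective by routing charges along edges outside $C$ rather than to vertices. Put $d=d_C(a_0)$, and suppose $y$ has $k_y\ge 1$ edges of $C$ whose $B$-endpoint lies outside $N_G(a_0)$.

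Your structural fact, applied to $a_0$ and such an endpoint, makes $y$ adjacent to all $d$ vertices of $N_C(a_0)$. Since $d_C(y)\le d$ by maximality and $k_y$ of the $C$-edges at $y$ leave $N_G(a_0)$, at most $d-k_y$ of those $d$ edges lie in $C$. So at least $k_y$ of them do not.

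Now send each $C$-edge whose $B$-endpoint lies in $N_G(a_0)$ to itself. Send the $k_y$ outlying $C$-edges at $y$ injectively to $k_y$ of these non-$C$ edges $yx'$. The two kinds of image are separated by membership in $C$, and a non-$C$ image determines $y$. Hence $C$ injects into the set of edges with $B$-endpoint in $N_G(a_0)$, which has size at most $\Delta_A\Delta_B$.
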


\begin{proof}
We deduce both parts from the symmetric bounds above by a
degree-balancing blow-up. Replace each $a \in A$ by an independent set
$S_a$ with $|S_a| = \Delta_A$ and each $b \in B$ by an independent set
$S_b$ with $|S_b| = \Delta_B$, and for every edge $ab \in E(G)$ join
$S_a$ to $S_b$ completely, writing $K_{ab}$ for the resulting complete
bipartite block; call the whole graph $G^\star$. It is bipartite, and a
vertex of $S_a$ meets $S_b$ for each of the $\deg_G(a)$ neighbours $b$ of
$a$, so it has degree $\deg_G(a)\,\Delta_B \leq \Delta_A\Delta_B$;
likewise a vertex of $S_b$ has degree
$\deg_G(b)\,\Delta_A \leq \Delta_A\Delta_B$. Hence
$\Delta(G^\star) \leq \Delta_A\Delta_B$. Since $G^\star$ is bipartite,
the symmetric bounds apply to it.

For~(a), recall a strong clique of $G$ is a set $H \subseteq E(G)$ whose
members are pairwise at distance at most $2$: any two either share a
vertex or are joined by a further edge of $G$. Expand each edge $ab \in
H$ into the $\Delta_A\Delta_B$ edges of $K_{ab}$. Any two of the
resulting edges are strongly adjacent in $G^\star$, in one of three ways.
Two edges $(a_1, b_1), (a_2, b_2)$ inside one block $K_{ab}$ either share
a vertex or, if disjoint, are joined by $a_1 b_2$. If $ab$ and $a'b'$
share a vertex, say $a = a'$ (the case $b = b'$ is symmetric), then
$ab' \in E(G)$, so $a_1 b_1'$ joins $(a_1, b_1) \in K_{ab}$ and
$(a_1', b_1') \in K_{ab'}$. Finally, if $ab$ and $a'b'$ are joined in $G$,
say $ba' \in E(G)$, then $b_1 a_1'$ joins them. Hence
$\omega(L(G^\star)^2) \geq \Delta_A\Delta_B\,\omega(L(G)^2)$, and with
$\omega(L(G^\star)^2) \leq \Delta(G^\star)^2 \leq (\Delta_A\Delta_B)^2$
we obtain~(a).

For~(b), each edge of $G$ contributes $|S_a||S_b| = \Delta_A\Delta_B$
edges to $G^\star$, so $|E(G^\star)| = \Delta_A\Delta_B\,|E(G)|$. An
induced matching of $G^\star$ (an independent set of $L(G^\star)^2$)
meets each block $K_{ab}$ in at most one edge, and projects
injectively --- via $(a_i, b_j) \in K_{ab} \mapsto ab$ --- to an induced
matching of $G$: were two images $ab, a'b'$ at distance at most $2$ in
$G$, the argument of~(a) would make their preimages strongly adjacent in
$G^\star$. Thus $\nu_s(G) \geq \nu_s(G^\star)$, and with
$\nu_s(G^\star) \geq |E(G^\star)|/\Delta(G^\star)^2 \geq
\Delta_A\Delta_B\,|E(G)|/(\Delta_A\Delta_B)^2 = |E(G)|/(\Delta_A\Delta_B)$,
part~(b) follows.
\end{proof}

Setting $\Delta_A = \Delta_B = \Delta(G)$ recovers the
Faudree--Gy\'arf\'as--Schelp--Tuza
bounds~\cite{faudreeStrongChromaticIndex1990}
$\omega(L(G)^2) \leq \Delta(G)^2$ and
$\nu_s(G) \geq |E(G)|/\Delta(G)^2$ for bipartite $G$; part~(b) is the
existence half of their induced-matching
conjecture~\cite{faudreeInducedMatchingsBipartite1989}. We also verify
both parts of Proposition~\ref{prop:asym-necessary} formally in Lean~4,
by alternative direct combinatorial arguments, in the accompanying
repository.

We now turn to the random model.

Theorem~\ref{thm:bq-random} establishes the Brualdi--Quinn Massey
conjecture~\cite{brualdiIncidenceStrongEdge1993} a.a.s.\ for random
bipartite graphs of bounded aspect ratio at constant density
$p \in (0, 1)$. The closest prior result
is that of Frieze, Krivelevich, and
Sudakov~\cite{friezeKrivelevichSudakovStrongChromaticIndex2005}, who
established $\chi'_s(G) = O(\Delta(G)^2 / \log \Delta(G))$ a.a.s.\
for the random graph $G \sim G(n, p)$ in the dense regime; Czygrinow
and Nagle~\cite{czygrinowNagleStrongEdgeColorings2002} treated the
hypergraph counterpart of strong edge-colouring.
Our argument is a Pippenger--Spencer
covering~\cite{kahnAsymptoticEdgeColoring1996} of an auxiliary
hypergraph $H_k$, for a \emph{constant} $k = k(p)$, whose vertices
are the edges of $G$ and whose hyperedges are the induced
$k$-matchings of $G$; the Pippenger--Spencer
hypothesis-verification step uses the Kim--Vu polynomial
concentration inequality~\cite{kimVuConcentration2000} for the
average degree and codegree. The strong edge-colouring is read off
the cover, and the Brualdi--Quinn Massey bound follows directly by
comparing the cover size with $\Delta_A(G)\,\Delta_B(G)$.

\subsection{The auxiliary hypergraph $H_k$}

Fix $k \geq 2$. Let $H_k$ be the hypergraph with
$V(H_k) := E(G)$ and hyperedge set
\[
E(H_k) := \bigl\{\, M \subseteq E(G) :
|M| = k,\;
M \text{ is an induced matching of } G \,\bigr\}.
\]
The following lemma captures the relevance of $H_k$ to strong
edge-colouring.

\begin{lemma}[colouring from a cover]
\label{lem:colour-from-cover}
Let $\mathcal{C}$ be a collection of edge-disjoint hyperedges of
$H_k$ and let $L \subseteq E(G)$ be the set of $G$-edges not covered
by any element of $\mathcal{C}$. Then
$\chi'_s(G) \leq |\mathcal{C}| + |L|$.
\end{lemma}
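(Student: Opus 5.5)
We prove Lemma~\ref{lem:colour-from-cover} by writing down an explicit strong edge-colouring with $|\mathcal{C}| + |L|$ colours. Give each hyperedge $M \in \mathcal{C}$ its own colour $c_M$, and give each uncovered edge $e \in L$ its own private colour $c_e$, with all these colours distinct. Since the members of $\mathcal{C}$ are pairwise disjoint as subsets of $V(H_k) = E(G)$, every edge of $G$ lies in at most one $M \in \mathcal{C}$. Together with $L$ being exactly the set of edges covered by no member, this makes the map $E(G) \to \{c_M\} \cup \{c_e\}$ well defined and total. The number of colours used is at most $|\mathcal{C}| + |L|$.

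It then suffices to check that each colour class is an independent set in $L(G)^2$, that is, an induced matching of $G$. Singleton classes $\{e\}$ with $e \in L$ are trivially induced matchings. A class of the form $c_M$ is exactly $M$, because edges of $M$ receive no other colour and no edge outside $M$ receives $c_M$. By the definition of $E(H_k)$, $M$ is an induced matching of $G$. So no two edges of $M$ share a vertex or are joined by a third edge of $G$, which means they are pairwise non-adjacent in $L(G)^2$. Hence the colouring is a proper vertex-colouring of $L(G)^2$, so $\chi'_s(G) = \chi(L(G)^2) \leq |\mathcal{C}| + |L|$.

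There is no real obstacle here. The only point needing care is reading ``edge-disjoint hyperedges'' as disjointness in $V(H_k) = E(G)$, so that the colouring is single-valued. Even without disjointness, we could assign each edge the colour of any one member containing it, since subsets of induced matchings are again induced matchings. The bound would still hold, so the argument is robust to this reading.
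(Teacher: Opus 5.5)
Your proposal is correct and follows essentially the same argument as the paper: one colour per matching in $\mathcal{C}$, a fresh colour for each uncovered edge, and each colour class is an induced matching, hence independent in $L(G)^2$. Your extra remark that the bound survives without disjointness, because subsets of induced matchings are induced matchings, is also correct.
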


\begin{proof}
Each hyperedge of $H_k$ is an induced matching of $G$ of size $k$,
hence a single colour class of a strong edge-colouring. Colour each
matching $M \in \mathcal{C}$ with its own colour (using
$|\mathcal{C}|$ colours) and each uncovered edge in $L$ with a fresh
colour ($|L|$ colours). Each colour class is an induced matching, so any two edges of the
same colour lie at $L(G)$-distance greater than $2$; hence the
colouring is a valid strong edge-colouring. The total
number of colours used is $|\mathcal{C}| + |L|$.
\end{proof}

The Pippenger--Spencer cover we construct below has
$|\mathcal{C}| \leq \lceil |E(G)| / k \rceil$ and uncovered set
$|L| \leq \lceil \zeta\,|E(G)| \rceil$ for a small constant
$\zeta > 0$, so by Lemma~\ref{lem:colour-from-cover} the colouring
uses at most $\lceil |E(G)|/k \rceil + \lceil \zeta\,|E(G)|\rceil$
colours. For a suitable constant $k = k(p)$ this count is at most
$\Delta_A(G)\,\Delta_B(G)$ a.a.s., which is the Brualdi--Quinn Massey
bound; we carry out the comparison in the proof of
Theorem~\ref{thm:bq-random} below.

The rest of this section produces such a near-perfect
cover of $V(H_k) = E(G)$ by hyperedges of $H_k$.

\subsection{Expectation identities}

The Pippenger--Spencer covering theorem requires bounds on the
average degree of $H_k$ at a vertex (an edge of $G$) and the average
codegree of $H_k$ at a pair of vertices.

For $e \in E(G)$ let
$D(e) := |\{ M \in E(H_k) : e \in M \}|$
be the $H_k$-degree of $e$, and for distinct $e_1, e_2 \in E(G)$
let
$C(e_1, e_2) := |\{ M \in E(H_k) : \{e_1, e_2\} \subseteq M \}|$
be their codegree.

The number of induced $k$-matchings in $G$ containing a fixed
edge $e = (a, b)$ has expectation, conditional on $e \in E(G)$ and
over $G \sim G(n_A, n_B, p)$:
\[
\mu_D := \E[D(e) \mid e \in E(G)]
= \binom{n_A - 1}{k-1}\binom{n_B - 1}{k-1}\,(k-1)!\,
p^{k-1}\,(1 - p)^{k(k-1)}.
\]
The first three factors count ordered choices of the remaining
$k-1$ edges (as each requires one fresh vertex on each side, ordered
within the matching), the $p^{k-1}$ accounts for those remaining
edges being present, and the $(1-p)^{k(k-1)}$ accounts for the
absence of all chord edges among the $k$ matched edges: among the
$k$ pairwise-disjoint edges there are $k^2$ ordered
endpoint-pairs $(a_i, b_j)$, of which the $k$ diagonal pairs are the
matching edges themselves, leaving $k(k-1)$ potential chords that
must all be absent.

Since $k = k(p)$ is a fixed constant, $\mu_D = \Theta(n_A^{k-1}
n_B^{k-1})$, with the implied constant depending only on $p$ and $k$.

For the codegree at two fixed disjoint edges $e_1, e_2$, fixing a
second edge removes one further pair of fresh vertices from each
side, so the count of extending $k$-matchings drops by a factor of
order $n_A n_B$:
\[
\mu_C := \E[C(e_1, e_2) \mid e_1, e_2 \in E(G)]
= \Theta\!\left(\frac{\mu_D}{n_A\,n_B}\right),
\]
again with an implied constant depending only on $p, k$. In
particular $\mu_D / \mu_C = \Theta(n_A n_B)$, the codegree-decay
margin the Pippenger--Spencer hypotheses require.

\subsection{Concentration}

We use polynomial concentration of Kim and
Vu~\cite{kimVuConcentration2000}:

\begin{theorem}[Kim--Vu polynomial concentration]
\label{thm:kim-vu}
Let $Y$ be a polynomial of degree $d$ in independent Bernoulli random
variables, with $\E[Y] \geq 1$. Then for every $\lambda \geq 1$,
\[
\Pr\bigl[\,|Y - \E[Y]| > a_d\,\sqrt{\mathcal{E}(Y)\,\mathcal{E}'(Y)}\,
\lambda^d\,\bigr]
\leq d!\,e^{-\lambda + (d-1)\log n},
\]
where $\mathcal{E}(Y)$ is the maximum expected partial derivative of
$Y$ over all orders $\geq 0$ (so $\mathcal{E}(Y) \geq \E[Y]$),
$\mathcal{E}'(Y)$ is the maximum over orders $\geq 1$, $n$ is the
number of underlying Bernoulli variables, and $a_d$ is a constant
depending only on $d$.
\end{theorem}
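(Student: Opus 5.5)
This is the Kim--Vu inequality, imported from~\cite{kimVuConcentration2000}. In our application we use it as a black box, so the plan below only reconstructs their argument. Throughout we assume, as Kim and Vu do and as holds for the counts $D(e)$ and $C(e_1,e_2)$ we feed into it, that $Y = \sum_{S} w_S \prod_{i \in S} t_i$ is multilinear with non-negative coefficients $w_S$ and $|S| \leq d$. For a set $A$ of variables, $\partial_A Y$ denotes the partial derivative, and $\mathcal{E}(Y) = \max_{|A| \geq 0} \E[\partial_A Y]$, $\mathcal{E}'(Y) = \max_{|A| \geq 1} \E[\partial_A Y]$. The proof goes by induction on the degree $d$, via a martingale that exposes the Bernoulli variables $t_1, \dots, t_n$ one at a time.

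\emph{Base case $d = 1$.} Here $Y = w_0 + \sum_i w_i t_i$ with $0 \leq w_i \leq \mathcal{E}'(Y)$. Then
\[
\operatorname{Var}(Y) \leq \sum_i w_i^2 p_i \leq \mathcal{E}'(Y)\,\E[Y] \leq \mathcal{E}'(Y)\,\mathcal{E}(Y).
\]
Bernstein's inequality with increments bounded by $\mathcal{E}'$ gives a deviation $O(\sqrt{\mathcal{E}\mathcal{E}'}\,\lambda)$ with probability at most $2e^{-\lambda}$. This matches the claimed bound with $d! = 1$ and no $\log n$ term, up to the constant $a_1$ (and the factor $2$, absorbed there).

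\emph{Inductive step.} Let $Y_j := \E[Y \mid t_1, \dots, t_j]$ be the Doob martingale. Changing $t_j$ moves $Y_j$ by at most
\[
c_j := \E\bigl[\partial_{\{j\}} Y \mid t_1, \dots, t_{j-1}\bigr],
\]
and the conditional variance of the increment is at most $p_j c_j^2$. Each $\partial_{\{j\}} Y$ is a non-negative polynomial of degree at most $d-1$. Its own expected partial derivatives satisfy $\mathcal{E}(\partial_{\{j\}} Y) \leq \mathcal{E}'(Y)$, and similarly $\mathcal{E}'(\partial_{\{j\}} Y) \leq \mathcal{E}'(Y)$. By induction, after a union bound over the $n$ choices of $j$ (and over the exposure stages, a further polynomial-in-$n$ factor that the $\log n$ term in the exponent accounts for), with probability at least $1 - (d-1)!\,n\,e^{-\lambda + (d-2)\log n}$ every $c_j$ is at most $O(\mathcal{E}'(Y)\,\lambda^{d-1})$. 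On this good event:
\begin{itemize}
\item the increments are bounded by $M := O(\mathcal{E}'\lambda^{d-1})$;
\item the total conditional variance $\sum_j p_j c_j^2$ is at most $M \sum_j p_j c_j$;
\item the sum $\sum_j p_j c_j$ is controlled by $d\,\mathcal{E}(Y)$, up to further induction-controlled error, using the identity $\sum_j t_j\,\partial_{\{j\}} Y = \sum_S |S|\, w_S \prod_{i\in S} t_i \leq d\,Y$.
\end{itemize}
Freedman's martingale inequality then gives a deviation of
\[
O\bigl(\sqrt{\mathcal{E}\,\mathcal{E}'\,\lambda^{2d-2}}\cdot\sqrt{\lambda} + M\lambda\bigr) \leq a_d \sqrt{\mathcal{E}\mathcal{E}'}\,\lambda^d
\]
with failure probability $e^{-\lambda}$. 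The second term uses $\mathcal{E}' \leq \mathcal{E}$. Adding the two failure probabilities yields $d!\,e^{-\lambda + (d-1)\log n}$ after adjusting constants.

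\emph{Main obstacle.} The hard part is making the bounded-differences argument rigorous when the bounds on the $c_j$ hold only on a good event, since the martingale increments are not deterministically bounded. I would first try stopping the martingale at the first $j$ at which some $c_j$, or the running variance, exceeds its threshold. The stopped process has bounded increments everywhere, so Freedman applies to it, and it agrees with $Y$ on the good event. The bookkeeping that must be checked carefully is the following:
\begin{itemize}
\item applying the induction to the conditional expectations $c_j$, which are themselves polynomials in the remaining variables with non-negative coefficients, so the hypothesis applies;
\item ensuring $\E \geq 1$ at each level, or otherwise replacing the expectation by $\max(\E, 1)$ and handling that case trivially;
\item tracking the constants $a_d$ and the powers of $\lambda$.
\end{itemize}
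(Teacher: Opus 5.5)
Your sketch is correct and is essentially Kim and Vu's own proof, which the paper imports as a black box without proving: induction on the degree, the Doob martingale with increments $(t_j-p_j)c_j$ and $c_j$ predictable, the degree-$(d-1)$ case applied both to each $c_j$ and to $\sum_j p_jc_j$, and a stopping time to pass from the good event to Freedman's inequality. You are also right to add the non-negative-coefficient hypothesis, which the paper's statement omits but which is genuinely necessary: for instance, $Y=M(1-t_1)$ in $n\geq 2$ variables has $\mathcal{E}'(Y)=0$ yet $|Y-\mathbb{E}[Y]|>0$ almost surely.

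Two small corrections. First, the union bound is over $j$ only, and that alone produces the factor $n^{d-1}$; there is no further polynomial factor from ``exposure stages''. Second, your aside is false: $D(e)$ and $C(e_1,e_2)$ count \emph{induced} matchings, so they carry chord-absence factors $(1-t_f)$ whose expansion has negative coefficients. The paper's application therefore does not automatically meet the hypothesis you (correctly) impose.
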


The exact form of $\mathcal{E}'(Y)$ appears in op.\ cit. We use it
as a black box.

We hold $k = k(p) \geq 3$ a fixed constant throughout, so $D(e)$ is
a polynomial of constant degree in the independent edge-indicators
of $G$. Theorem~\ref{thm:kim-vu} then controls its deviation from
$\mu_D$: the order-zero parameter is
$\mathcal{E}(D(e)) = \Theta\bigl((n_A n_B)^{k-1}\bigr) = \Theta(\mu_D)$,
while each partial derivative of order $\geq 1$ fixes a further edge
(removing a free vertex on each side), so
$\mathcal{E}'(D(e)) = O\bigl((n_A n_B)^{k-2}\bigr) = O(\mu_D/(n_A n_B))$
is smaller by a factor $\Theta(n_A n_B)$; the ratio
$\mathcal{E}'/\mathcal{E} = O(1/(n_A n_B))$ drives the concentration
margin. Combined with the mean $\mu_D = \Theta((n_A n_B)^{k-1})$ and
the ratio $\mu_D / \mu_C = \Theta(n_A n_B)$, the Kim--Vu tail at any fixed
relative deviation $\delta > 0$ is summable: the probability that
\emph{some} edge $e$ has $|D(e) - \mu_D| > \delta \mu_D$ is at most
$C_1 / (n_A n_B)$ for a constant $C_1 = C_1(p, k, \delta)$.

The same inequality applied to the codegree polynomial
$C(e_1, e_2)$ --- also of constant degree, with order-zero parameter
$\Theta\bigl((n_A n_B)^{k-2}\bigr) = \Theta(\mu_C)$ and order-$(\geq\!1)$
parameter $O\bigl((n_A n_B)^{k-3}\bigr)$, and mean
$\mu_C = \Theta(\mu_D / (n_A n_B))$ --- bounds the probability that
\emph{some} disjoint pair $\{e_1, e_2\}$ has
$C(e_1, e_2) > (1+\delta)\mu_C$ by $C_2 / (n_A n_B)$ for a constant
$C_2 = C_2(p, k, \delta)$. Pairs sharing an endpoint have codegree
$0$, since two edges of a common induced matching are
vertex-disjoint.

\subsection{The asymptotic regime}

We restrict to graphs of \emph{bounded aspect ratio}: a constant
$C_{\mathrm{ar}} \geq 1$ with
$\max(n_A, n_B) \leq C_{\mathrm{ar}}\,\min(n_A, n_B)$. Writing
$n^* := \min(n_A, n_B)$, this forces $n_A, n_B = \Theta(n^*)$, so
$\mu_D$, $\mu_C$, and the effect parameters above are all powers of
$n^*$ up to $p$- and $k$-dependent constants. The following lemma supplies the
near-regularity and codegree-decay inputs of the Pippenger--Spencer result.

\begin{lemma}[asymptotic regime]
\label{lem:asymp-regime}
Let $p \in (0, 1)$, let $k \geq 3$ be a fixed integer, and let
$C_{\mathrm{ar}} \geq 1$. For every $\delta > 0$ there is
$N_0 = N_0(p, k, C_{\mathrm{ar}}, \delta)$ such that for
$G \sim G(n_A, n_B, p)$ with
$\max(n_A, n_B) \leq C_{\mathrm{ar}}\,\min(n_A, n_B)$ and
$\min(n_A, n_B) \geq N_0$,
\[
\Pr\bigl[\,
\text{every } e\!:\, |D(e) - \mu_D| \leq \delta \mu_D,\ \
\text{every disjoint } \{e_1, e_2\}\!:\, C(e_1, e_2) \leq (1 + \delta)\mu_C
\,\bigr] \geq 1 - \frac{C_1 + C_2}{(n^*)^2},
\]
where $C_1, C_2$ depend only on $p, k, \delta$; in particular this
probability is $1 - o(1)$.
\end{lemma}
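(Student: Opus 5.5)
The plan is a union bound over edges and disjoint edge pairs, using the Kim--Vu inequality (Theorem~\ref{thm:kim-vu}) with $\lambda$ of order $\log n^*$, so that each individual failure probability is polynomially small in $n^*$ with an exponent we can choose freely. The underlying Bernoulli variables are the $N := n_A n_B = \Theta((n^*)^2)$ edge indicators.

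\textbf{Step 1: degree polynomials.} For a fixed pair $e=(a,b)$, write $D(e)$, conditioned on $e\in E(G)$, as a polynomial in the other edge indicators. It is the sum over ordered choices of $k-1$ further vertex pairs of $\prod x_{a_ib_i}\prod(1-x_{a_ib_j})$, divided by $(k-1)!$. Expanding the factors $1-x$ gives a polynomial of degree $d\le k^2$, constant in $k$, with signed coefficients. Kim--Vu needs nonnegative coefficients, so I would instead treat $1-x_{uv}$ as a Bernoulli$(1-p)$ variable in its own right. Each monomial is then a product of $k^2-1$ distinct independent indicators, some of the form $x$ and some of the form $\bar x=1-x$, and is nonnegative. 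Differentiating with respect to any variable fixes at least one further vertex on each side, or fixes a chord involving one fresh vertex. This gives $\mathcal E(D(e))=\Theta(\mu_D)$ and $\mathcal E'(D(e))=O(\mu_D/n^*)$.

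I expect this to be the main obstacle. A derivative through a chord variable fixes only one fresh vertex, so the true margin is $\mathcal E'/\mathcal E=O(1/n^*)$, not the $O(1/(n_An_B))$ claimed in the text. This weaker margin still suffices, because $\sqrt{\mathcal E\mathcal E'}\lambda^d=O(\mu_D (n^*)^{-1/2}(\log n^*)^{d})=o(\delta\mu_D)$.

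\textbf{Step 2: tail bound for each edge.} Take $\lambda=c\log n^*$ with $c$ large, depending on $d$, and require $n^*\ge N_0(p,k,C_{\mathrm{ar}},\delta)$ so that $a_d\sqrt{\mathcal E\mathcal E'}\lambda^d\le\delta\mu_D$. Theorem~\ref{thm:kim-vu} then gives failure probability at most $d!\,e^{-\lambda+(d-1)\log N}\le (n^*)^{-6}$. Step 1 computed expectations conditionally on $e\in E(G)$, and this is harmless: if $e\notin E(G)$ there is nothing to check, and conditioning on $x_e=1$ just removes one variable. A union bound over at most $n_An_B\le C_{\mathrm{ar}}(n^*)^2$ pairs $e$ gives failure probability $O((n^*)^{-4})\le C_1/(n^*)^2$.

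\textbf{Step 3: codegrees.} Apply the same argument to $C(e_1,e_2)$ for disjoint pairs $e_1,e_2$. Its expectation is $\Theta(\mu_C)=\Theta((n^*)^{2k-4})$, which is at least $1$ because $k\ge3$; this is exactly why the lemma assumes $k\ge 3$. The derivative bound is again smaller by a factor $O(1/n^*)$. Only the upper tail is needed, and $\lambda=c\log n^*$ yields a failure probability of $(n^*)^{-8}$ per pair. A union bound over the $O((n^*)^4)$ disjoint pairs gives $C_2/(n^*)^2$. Pairs sharing an endpoint have codegree $0$ deterministically. Combining the two union bounds with the stated constants $C_1$ and $C_2$ proves the lemma.
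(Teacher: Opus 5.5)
Your architecture matches the paper's: Kim--Vu for each edge and each disjoint pair, followed by a union bound. Your Step~1 remark is also correct and sharper than the paper's text: the chords at $a$ and $b$ fix only one fresh vertex, so $\mathcal{E}'(D(e))/\mathcal{E}(D(e))=\Theta(1/n^*)$, and this still suffices.

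\textbf{The gap is Step~3.} For disjoint $e_1=(a_1,b_1)$ and $e_2=(a_2,b_2)$, the two cross pairs $a_1b_2$ and $a_2b_1$ are determined by $e_1,e_2$ themselves, and
\[
C(e_1,e_2)=\bar x_{a_1b_2}\,\bar x_{a_2b_1}\,C'(e_1,e_2),
\]
where $C'$ counts the $(k-2)$-extensions and involves neither variable. The second derivative with respect to $\bar x_{a_1b_2},\bar x_{a_2b_1}$ is $C'$ itself and fixes no fresh vertex. Hence $\mathcal{E}'(C)\geq\mathbb{E}[C']=\mu_C/(1-p)^2$, so your claim that the derivatives are smaller by a factor $O(1/n^*)$ is false, and Kim--Vu gives a deviation scale $\gg\mu_C$.

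This cannot be patched, because the codegree clause is false for small $\delta$. Given $e_1,e_2\in E(G)$, $C(e_1,e_2)$ is $0$ with probability $1-(1-p)^2$ and otherwise equals $C'$, which concentrates at $\mu_C/(1-p)^2$. A.a.s.\ $G$ contains induced $2$-matchings $\{e_1,e_2\}$, and for these $C(e_1,e_2)>(1+\delta)\mu_C$ whenever $\delta<(1-p)^{-2}-1$. The paper's sketch makes the same error: it asserts an order-$(\geq 1)$ parameter $O((n_An_B)^{k-3})$ for the codegree polynomial.

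What your method does prove is
\[
C(e_1,e_2)\leq(1+\delta)(1-p)^{-2}\mu_C .
\]
To get it, condition on $x_{e_1}=x_{e_2}=1$ and $x_{a_1b_2}=x_{a_2b_1}=0$, then run Step~2 on $C'$; every derivative of $C'$ fixes a fresh vertex. This weaker bound is all the Pippenger--Spencer step needs, since it only uses codegrees $o(\mu_D)$.

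\textbf{A second, repairable point.} Declaring $\bar x_{uv}=1-x_{uv}$ a separate Bernoulli variable does not give a polynomial in \emph{independent} variables. The same pair $uv$ is a matching edge in some monomials and a chord in others, so Theorem~\ref{thm:kim-vu} does not apply as you invoke it. As you note, you also cannot simply expand, because of the sign requirement. Use inclusion--exclusion instead: expanding $\prod(1-x_{a_ib_j})$ writes $D(e)=\sum_S(-1)^{|S|}Y_S$ over the $2^{k(k-1)}$ sets $S$ of chord positions. Each $Y_S$ is a nonnegative polynomial in the $x_{uv}$ with $\mathbb{E}[Y_S]=\Theta(\mu_D)$ and the same derivative orders. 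Applying Kim--Vu to each piece and summing the deviations gives $|D(e)-\mu_D|=o(\mu_D)$ with only a constant-factor loss in failure probability; the same works for $C'$.

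McDiarmid's bounded-differences inequality is a simpler alternative. The Lipschitz constants of $D(e)$ are $O((n^*)^{2k-4})$ for pairs avoiding $a,b$ and $O((n^*)^{2k-3})$ for the $O(n^*)$ pairs at $a$ or $b$, which gives tails $\exp(-\Theta(\delta^2 n^*))$.
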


\begin{proof}
By the previous subsection, the degree-bad event has probability at
most $C_1 / (n_A n_B)$ and the codegree-bad event at most
$C_2 / (n_A n_B)$ (as pairs sharing an endpoint contribute $0$). The
bounded aspect ratio hypothesis gives $n_A n_B \geq (n^*)^2$, so a
union bound over the two bad events leaves failure probability at most
$(C_1 + C_2) / (n^*)^2$, which tends to $0$.
\end{proof}

The constant $k$ and the bounded aspect ratio are both essential to
the Kim--Vu step underlying $C_1, C_2$: the deviation scale
$a_d \sqrt{\mathcal{E}\,\mathcal{E}'}\,\lambda^d$ is of order
$(n^*)^{2k-3}\,\mathrm{polylog}(n^*)$ --- since
$\mathcal{E} = \Theta((n^*)^{2k-2})$ and
$\mathcal{E}' = O((n^*)^{2k-4})$ --- whereas
$\delta \mu_D = \Theta\bigl((n^*)^{2k-2}\bigr)$, so the tail closes
with margin $\Theta\bigl(\mathrm{polylog}(n^*) / n^*\bigr) \to 0$.
With $k$ growing or the aspect ratio unbounded this margin is lost.

\subsection{Pippenger--Spencer covering and the colour count}

\begin{theorem}[Pippenger--Spencer covering, Kahn 1996;
\cite{kahnAsymptoticEdgeColoring1996}]
\label{thm:pippenger-spencer}
For every $\varepsilon > 0$ and $k \geq 2$ there exist $\delta > 0$
and $D_0$ such that every $k$-uniform hypergraph $\mathcal{H}$ on $N$
vertices with maximum degree $D \geq D_0$, minimum degree at least
$(1 - \delta)D$, and codegree at most $\delta D$ admits an
edge-disjoint subset $\mathcal{C} \subseteq E(\mathcal{H})$ with
$|\mathcal{C}| \leq \lceil N/k \rceil$ and
$|V(\mathcal{H}) \setminus \bigcup \mathcal{C}| \leq \varepsilon N$.
\end{theorem}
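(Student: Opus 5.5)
The plan is to deduce the statement from the classical Pippenger theorem on near-perfect matchings (in the Pippenger--Spencer / Frankl--R\"odl form) via the R\"odl nibble. First, the bound $|\mathcal{C}| \leq \lceil N/k \rceil$ needs no work once $\mathcal{C}$ consists of pairwise vertex-disjoint hyperedges. Here ``edge-disjoint'' means disjoint as $k$-subsets of $V(\mathcal{H})$, which is what Lemma~\ref{lem:colour-from-cover} uses. The members of $\mathcal{C}$ then cover $k|\mathcal{C}|$ distinct vertices, so $|\mathcal{C}| \leq N/k$. The content is therefore the assertion that $\mathcal{H}$ has a matching missing at most $\varepsilon N$ vertices.

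For that I would run the semi-random nibble. Fix a small $\theta = \theta(\varepsilon,k) > 0$. In one round, select each hyperedge independently with probability $\theta/D$. Keep the selected hyperedges that meet no other selected hyperedge, add them to the matching, and delete all covered vertices together with every hyperedge meeting them. The degree hypothesis $\deg \in [(1-\delta)D, D]$ gives the following first-order behaviour:
\begin{itemize}
\item a vertex is covered with probability about $\theta e^{-k\theta}$, so the vertex set shrinks by the factor $q := 1 - \theta e^{-k\theta} + O(\theta^2)$;
\item a surviving hyperedge keeps all $k$ of its vertices with probability about $q^k$;
\item hence surviving degrees fall to about $D q^{k-1}$.
\end{itemize}
The codegree bound $\leq \delta D$ is exactly what makes these events nearly independent: two vertices of a hyperedge share at most $\delta D$ hyperedges, so correlations contribute only $O(\delta)$ relative errors. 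I would control the new vertex count and every new degree by concentration, using Talagrand's inequality or a bounded-differences martingale. The local lemma is not needed here, since one can average and pass to a good outcome. This shows the residual hypergraph is again near-regular, with a degree-spread parameter $\delta' = \delta + O(\theta^2) + o(1)$, and with codegrees still at most $\delta D$.

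Iterate for $t = t(\varepsilon,\theta,k)$ rounds with $q^t < \varepsilon$. After these rounds the uncovered vertices number at most $(q^t + o(1))N \leq \varepsilon N$. Before the rounds, choose $\delta$ small enough, and $D_0$ large enough, that the accumulated errors over these boundedly many rounds stay below the tolerance each round needs. Since $t$ depends only on $\varepsilon$ and $k$, this is possible; the surviving degree $D q^{(k-1)t}$ stays large when $D \geq D_0$.

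The main obstacle is the concentration step. Degrees in the surviving hypergraph are sums of dependent indicators, and we must show they concentrate around $D q^{k-1}$ simultaneously for all surviving vertices. The tool is either a Lipschitz-type concentration, where each hyperedge's coin affects a given degree by at most $O(k\,\delta D)$ through codegrees, or an expectation-plus-variance argument with deletion of the few bad vertices into the leftover set. I would try the bounded-differences route first, absorbing atypical vertices into the $\varepsilon N$ leftover rather than insisting every vertex be typical. Alternatively, since the statement is attributed to Kahn, one may simply invoke it as proved in \cite{kahnAsymptoticEdgeColoring1996}, with the above sketch serving as justification.
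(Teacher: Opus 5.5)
The paper does not prove this statement. It imports it as a black box from \cite{kahnAsymptoticEdgeColoring1996}, and the Lean formalisation carries it as a named assumption, so your closing option of simply invoking it is exactly the paper's route. Your nibble sketch is a genuinely different, self-contained route: the standard R\"odl-nibble proof of Pippenger's theorem, in the style of Frankl--R\"odl or Alon--Spencer. Its set-up is right:
\begin{itemize}
\item reading ``edge-disjoint'' as vertex-disjoint in $\mathcal{H}$ (disjoint sets of $G$-edges) is what Lemma~\ref{lem:colour-from-cover} needs;
\item $|\mathcal{C}| \le N/k$ is then automatic;
\item your first-moment estimates are correct: cover probability $\approx \theta e^{-k\theta}$, edge survival $\approx q^k$, new degree $\approx Dq^{k-1}$, and correlations of relative size $O(\delta)$ via codegrees.
\end{itemize}
Your route gives an actual proof where the paper gives a citation. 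In the paper's own application nothing delicate is at stake, since there the codegrees are $O(D/(n_A n_B))$, far below $\delta D$.

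Your sketch, as first phrased, breaks at the concentration step, because the invariant you propagate is too strong. You ask that every new degree be controlled and that the residual be near-regular up to $o(1)$. With codegrees as large as $\delta D$ this cannot be shown. Take a vertex whose edges are funnelled through $O(1/\delta)$ neighbours, each of codegree about $\delta D$ with it. Its new degree is then driven by $O(1/\delta)$ nearly independent cover statuses, each moving it by about $\delta D$. So its standard deviation can be of order $D\sqrt{\delta\theta}$, however large $D$ is. None of your tools certifies that all vertices stay typical:
\begin{itemize}
\item bounded differences fails, and its worst-case Lipschitz constants are anyway far above $k\delta D$, since one coin can change the isolation status of up to $kD$ selected edges;
\item Talagrand cannot beat this true variance;
\item the local lemma fails because the per-vertex failure probability does not decrease with $D$, while the dependency degree grows.
\end{itemize}

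Your fallback is the correct fix, but it has to be built into the inductive statement. Iterate a lemma whose hypothesis only asks three things: all but $\gamma_i N_i$ vertices have degree $(1\pm\gamma_i)D_i$; every degree is at most $D$; every codegree is at most $\delta D$. Prove it by first and second moments plus Markov. Leave atypical vertices in place rather than deleting them, since deletion could depress their neighbours' degrees, and charge them to the leftover.

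Two bookkeeping points need correcting. First, the codegree bound relative to the current degree worsens by a factor $q^{-(k-1)}$ each round, so $\delta$ must be taken below roughly $\varepsilon^{k-1}$ times the per-round tolerance. Second, the spread grows multiplicatively, roughly $\gamma_{i+1} \le (1+O(\theta))\gamma_i + O(\theta\,\delta\, q^{-(k-1)i})$ plus sampling error. The additive $O(\theta^2)$ you wrote has no source in the matching version of the nibble, though it would be harmless.
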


We use Kahn's asymptotic refinement of Pippenger's nibble in the following.

\begin{proof}[Proof of Theorem~\ref{thm:bq-random}]
Fix a small constant $\zeta > 0$ and a constant $k = k(p) \geq 3$,
both fixed at the end. Apply Theorem~\ref{thm:pippenger-spencer}
with target leftover fraction $\zeta$ and uniformity $k$ to obtain
$\delta > 0$ and $D_0$. We invoke Lemma~\ref{lem:asymp-regime} at a deviation $\delta_1 > 0$
small enough that $(1 - \delta_1)/(1 + \delta_1) \geq 1 - \delta$. The
mean degree $\mu_D \to \infty$ (a deterministic consequence of constant
$p$), so $\mu_D \geq D_0$ for $n^*$ large; and a.a.s.\ the side maximum
degrees concentrate, with $\Delta_A(G) \geq 1$ and
$\Delta_B(G) \geq (1 - \delta_B)\,n_A\,p$ for any fixed $\delta_B > 0$.

On the intersection of these good events every $H_k$-degree lies in
$[(1-\delta_1)\mu_D, (1+\delta_1)\mu_D]$, so its maximum degree $D$
obeys $\min$-degree $\geq (1-\delta_1)\mu_D \geq (1-\delta)\,D$, and
every pairwise codegree is at most $(1+\delta_1)\mu_C = o(\mu_D) = o(D)$.
Thus Theorem~\ref{thm:pippenger-spencer} applies to $\mathcal{H} := H_k$
with $N := |E(G)|$ ($k$-uniformity holds by construction), yielding a
cover $\mathcal{C}$ with
$|\mathcal{C}| \leq \lceil |E(G)|/k \rceil$ and uncovered set
$|L| \leq \lceil \zeta\,|E(G)| \rceil$. By
Lemma~\ref{lem:colour-from-cover},
\[
\chi'_s(G)
\leq \lceil |E(G)|/k \rceil + \lceil \zeta\,|E(G)| \rceil
\leq \tfrac{1}{k}\,|E(G)| + \zeta\,|E(G)| + 2.
\]

We now compare with $\Delta_A(G)\,\Delta_B(G)$. Deterministically
$|E(G)| = \sum_{a \in A} \deg(a) \leq n_A\,\Delta_A(G)$, so with
$\beta := 1/k + \zeta$,
\[
\tfrac{1}{k}\,|E(G)| + \zeta\,|E(G)| + 2
= \beta\,|E(G)| + 2
\leq \beta\,n_A\,\Delta_A(G) + 2.
\]
Choose $k$ and $\zeta$ so that $\beta = 1/k + \zeta < (1-\delta_B)\,p$
(which is possible since $(1-\delta_B)p > 0$). The bounded aspect ratio gives
$n_A \to \infty$ as $n^* \to \infty$, so a.a.s.\
$\Delta_B(G) \geq (1-\delta_B)\,n_A\,p \geq n_A\,\beta + 2$, whence
$\Delta_B(G) - n_A\,\beta \geq 2$. Together with $\Delta_A(G) \geq 1$,
\[
\Delta_A(G)\bigl(\Delta_B(G) - n_A\,\beta\bigr) \geq 2
\quad\Longleftrightarrow\quad
\beta\,n_A\,\Delta_A(G) + 2 \leq \Delta_A(G)\,\Delta_B(G).
\]
Hence $\chi'_s(G) \leq \Delta_A(G)\,\Delta_B(G)$ a.a.s., which is
Theorem~\ref{thm:bq-random}.
\end{proof}

\subsection*{Certificate generation, formalisation, and empirical evidence}
\phantomsection
\label{sec:lean-and-empirical}

A Rust crate built on \texttt{rust-flag-algebra} produces the
semidefinite-programming certificates behind the three flag-algebra
bounds (Theorems~\ref{thm:sec-general}, \ref{thm:sec-bipartite},
and~\ref{thm:asym-sec}): it assembles the general, bipartite, and
asymmetric strong-edge-colouring SDPs over the size-$5$ local-flag
bases, solves them numerically with the CSDP or SDPA-LR solvers, and
rationalises each solved certificate into the Lean source that the
formalisation consumes.

We formalise all four theorems of the introduction, together with
Proposition~\ref{prop:asym-necessary}, in Lean~4. The
asymmetric clique and induced-matching bounds of
Proposition~\ref{prop:asym-necessary} are combinatorial and close from
the three standard kernel axioms alone.
The three strong chromatic index bounds
(Theorems~\ref{thm:sec-general}, \ref{thm:sec-bipartite},
and~\ref{thm:asym-sec}) each rest on the sparse colouring lemma
together with three named assumptions --- the certificate-output bound,
the basis locality, and the combinatorial-bridge identity --- of the
general, bipartite, and four-colour certificates of
Sections~\ref{sec:cert-general}, \ref{sec:cert-bipartite},
and~\ref{sec:cert-asym} respectively. The asymmetric certificate is the
$334$-flag four-colour one ($\phi(O) \leq 4.5496$, its bridge identity
stated on the regular four-colour class), and it covers \emph{all}
rational $r \in (0, 1]$ at once: it is ratio-independent, living on the
regular class produced by the degree-balancing blow-up, so it needs
neither a per-ratio solve nor any algebra-level rescaling. Finally, we
formalise the a.a.s.~Brualdi--Quinn Massey bound
(Theorem~\ref{thm:bq-random}) modulo two named assumptions only --- the
verbatim Pippenger--Spencer and Kim--Vu theorems of
Section~\ref{sec:random}.

As an independent check on the SDP certificates and on the
underlying Erd\H{o}s--Ne\v{s}et\v{r}il and
Faudree--Gy\'arf\'as--Schelp--Tuza conjectures, we ran a
computer search for counterexamples on small regular graphs across
a wide range of maximum degrees. The search swept a total of
$\approx 712$ million graphs across $\Delta \in \{3, 4, 5, \dots, 19\}$
and the asymmetric bipartite regime, finding zero counterexamples. Its
most extensive single regime was the bipartite Faudree case at
$\Delta = 4$: all $\approx 3.2 \times 10^{8}$ connected $4$-regular
bipartite graphs of order at most $24$, confirming $\chi'_s(G) \leq
16$ for every such graph.

The Lean formalisation, the Rust
certificate generator, the certificate data, and the search
scripts are all available at~\cite{daveyLocalFlags2024Repo}. The
repository also provides a side-by-side correspondence
(\texttt{RESULTS.md}) mapping every result of this paper to its Lean
statement and the exact axiom set it depends on.

\subsection*{AI usage declaration}
\phantomsection
\label{sec:AI-usage}

The results of this paper were obtained in three main phases, in 2019--2020, in 2023--2024, and then in 2026.
The local flags framework itself and the three applications (Theorems~\ref{thm:sec-general}, \ref{thm:sec-bipartite}, and~\ref{thm:asym-sec}) were obtained in the first two of these phases, well before any significant adoption of AI methods for mathematics.
The principal codebase for local flags was developed in the first of these periods and then expanded upon in the second, and neither coding effort used AI assistance.
These results were made publicly accessible in 2024 via Eoin Davey's MSc thesis~\cite{daveyLocalFlags2024} at the University of Amsterdam theses repository.

During the third phase, we used one commercially available agentic AI system for the following purposes:
\begin{enumerate}
\item formal verification of the mathematical results in Lean~4;
\item empirical checks to sweep for potential counterexample graphs;
\item proof of subsidiary results (Theorem~\ref{thm:bq-random} and Proposition~\ref{prop:asym-necessary}) under our guidance; and
\item drafting and refining the exposition of the paper, using the text of Eoin Davey's MSc thesis~\cite{daveyLocalFlags2024} as a core basis.
\end{enumerate}

\subsection*{Acknowledgements}

This paper is based in part on Eoin Davey's MSc
thesis~\cite{daveyLocalFlags2024} at the University of Amsterdam;
he thanks the Korteweg--de~Vries Institute for Mathematics for
hosting the project. R\'emi de Joannis de Verclos and Ross Kang
were partially supported by a Vidi grant (639.032.614) of the
Netherlands Organisation for Scientific Research (NWO) while at
Radboud University, and both would like to thank Louis Esperet for helpful discussions (well) over a decade ago. Eoin Hurley and Ross Kang were partially
supported by the Gravitation Programme NETWORKS (024.002.003) of
the Dutch Ministry of Education, Culture and Science (OCW) while at the University of Amsterdam. Ross Kang
was additionally partially supported by the NWO Open Competition
grant OCENW.M20.009.
We thank Wouter Cames van Batenburg for pointing out the shorter proof of Proposition~\ref{prop:asym-necessary}.

\subsection*{Open access statement}

For the purpose of open access, a CC BY public copyright license is
applied to any Author Accepted Manuscript (AAM) arising from this
submission.

\printbibliography

\end{document}